\newcommand{\dx}{~\mathrm{d}x}
\newcommand{\sym}{\mathrm{sym}}
\newcommand{\curl}{\mathrm{curl}}
\newcommand{\Id}{\mathrm{Id}}
\newcommand{\R}{\mathbb{R}}
\def\endproof{\hspace*{\fill}\mbox{\ \rule{.1in}{.1in}}\medskip }
\newcommand*{\dbar}[1]{\bar{\bar{#1}}}
\newcommand*{\tbar}[1]{\bar{\dbar{#1}}}
\numberwithin{equation}{section}
\theoremstyle{plain}
\newtheorem*{theorem*}{Theorem}
\newtheorem{theorem}{Theorem}[section]
\newtheorem{lemma}[theorem]{Lemma}
\newtheorem{corollary}[theorem]{Corollary}
\newtheorem{proposition}[theorem]{Proposition}
\theoremstyle{definition}
\begin{document}
\title [The Monge-Amp\`ere system in dimension and codimension two]
{The Monge-Amp\`ere system in dimension two is fully flexible in
  codimension two}
\author{Dominik Inauen and Marta Lewicka}
\address{D.I.: Institut f\"ur Mathematik, Universit\"at Leipzig,
  D-04109, Leipzig, Germany}
\address{M.L.: University of Pittsburgh, Department of Mathematics, 
139 University Place, Pittsburgh, PA 15260}
\email{dominik.inauen@math.uni-leipzig.de, lewicka@pitt.edu} 

\date{\today}
\thanks{M.L. was partially supported by NSF grant DMS-2407293.
AMS classification: 35J96, 53C42, 53A35}

\begin{abstract}
We prove that every $\mathcal{C}^1(\bar\omega)$-regular subsolution of the Monge-Amp\`ere system posed on a $2$-dimensional domain $\omega$ and with target codimension $2$, can be uniformly approximated by its exact solutions with regularity $\mathcal{C}^{1,\alpha}(\bar\omega)$ for any $\alpha<1$. This result asserts flexibility of Poznyak's theorem for isometric immersions of $2$d Riemannian manifolds into $\mathbb{R}^4$, in the parallel setting of the Monge-Amp\`ere system.
\end{abstract}

\maketitle
\tableofcontents

\section{Introduction}

The purpose of this paper is to prove the full flexibility of weak solutions to the Monge-Amp\`ere system in dimension $d=2$ and codimension $k=2$.
The Monge-Amp\`ere system was introduced in \cite{lew_conv} as a higher-dimensional version of the classical Monge-Amp\`ere equation posed on a $2$-dimensional domain, in relation to its interpretation as the prescription, to the leading order terms, of the Gaussian curvature of a shallow surface given as the graph of $v$:
\begin{equation}\label{e:MA}
\begin{split}
& \det \nabla^2 v =f \quad \text{ in } \; \omega \subset \R^2,\\
& \, \mbox{for }\; v:\omega\to\R.
\end{split}
\end{equation}
In the same vein, prescription of the full Riemann curvature tensor $[R_{ij, st}]_{i,j,s,t:1\ldots d}$ of a $d$-dimensional shallow manifold that is the graph of $v:(\omega\subset\R^d)\to\R^k$,  is reflected by:
\begin{equation}\label{e:MAdk}
\begin{split}
& \mathfrak{Det}\, \nabla^2 v \doteq \big[\langle\partial^2_{is}v,\partial^2_{jt}v\rangle - 
\langle\partial^2_{it}v,\partial^2_{js}v\rangle\big]_{i,j,s,t:1\ldots d}
= F \quad \text{ in } \; \omega \subset \R^d,\\
& \mbox{for }\; v:\omega\to\R^k,
\end{split}
\end{equation}
where $F:\omega\to\R^{d^4}$ is a given field. 
Indeed, the Riemann curvatures of the family of embeddings $u_\epsilon :\omega\to\R^{d+k}$ parametrized by $\epsilon\to 0$ and given by $u_\epsilon(x) = (x, \epsilon v(x))$, are calculated as:
$$R_{ij,st}\big((\nabla u_\epsilon)^T\nabla u_\epsilon)\big) =
R_{ij,st}\big(\mbox{Id}_d + \epsilon^2(\nabla v)^T \nabla v\big)= \epsilon^2(\mathfrak{Det}\,\nabla^2v)_{ij,st} + o(\epsilon^2),$$
similarly to the, perhaps more familiar, formula for Gauss's curvature $\kappa$ in case $d=2$, $k=1$:
$$\kappa((\nabla u_\epsilon)^T\nabla u_\epsilon)=\kappa(\mbox{Id}_2+\epsilon^2 \nabla v\otimes\nabla v)= \frac{\epsilon^2\det \nabla^2v}{(1+\epsilon^2|\nabla v|^2)^2} =\epsilon^2\det\nabla^2 v + o(\epsilon^2).$$
Relying on the special structure of $\mathfrak{Det}\,\nabla^2$, we call $v\in H^1(\omega,\R^k)$ a {\em weak solution} to (\ref{e:MAdk}), if the following identity holds in the sense of distributions:
\begin{equation}\label{e:MA_weak}
-\frac{1}{2}\mathfrak{C}^2((\nabla v)^T\nabla v) = F \quad \text{ in } \; \omega.
\end{equation}
Here, $\mathfrak{C}^2$ is a second-order operator introduced in \cite{lew_conv}, which in dimension $d=2$ reduces, up to symmetries, to taking {\em $\mbox{curl}\,\mbox{curl}$} of a $\R^{2\times 2}_{\sym}$- valued matrix field, whereas and for $k=1$, the identity (\ref{e:MA_weak}) becomes exactly the weak formulation of (\ref{e:MA}) as studied in \cite{lewpak_MA}:
$$-\frac12\,\curl\,\curl\left( \nabla v\otimes \nabla v\right) = f.$$
The operator in the left hand side above appeared in \cite{Iwaniec}, called therein the \emph{very weak Hessian}. It can be observed that the latter equation on a simply connected domain $\omega\subset \R^2$, or, more generally, the system (\ref{e:MA_weak}) when posed on a contractible domain $\omega\subset\R^d$, reduces to solving:
\begin{equation}\label{e:VK}
\begin{split}
& \frac12 (\nabla v)^T \nabla v + \sym\, \nabla w = A
\\ & \mbox{for }\; v:\omega\to\R^k, \quad w:\omega\to\R^d,
\end{split}
\end{equation} 
where the matrix field $A:\omega\to \R^{d\times d}_\sym$ satisfies the compatibility condition: $-\mathfrak{C}^2(A) = F$, further equivalent to the satisfaction by $F$ of the appropriate symmetry, algebraic and differential identities. The system (\ref{e:VK}) is called the {\em von K\'arman system}, in relation to the von K\'arman stretching content in the theory of elasticity, where for $d=2, k=1$ the fields $v,w$ are interpreted, respectively, as the out-of-plane and in-plane displacements of the midsurface $\omega$ of a thin elastic plate \cite{lew_book}.
The system (\ref{e:VK}) encodes the agreement, in the leading order, between the perturbed metric $g_\epsilon = \mathrm{Id}_d + 2\epsilon^2 A $ and the induced metric of the augmented embeddings $\bar u_\epsilon(x) = (x + \epsilon^2 w(x) , \epsilon v(x))$, revealing a close connection between the Monge-Amp\`ere system and the problem of isometrically embedding $d$-dimensional Riemannian manifolds into $\R^{d+k}$.

\subsection{The main results}
In this paper, we focus on the case $d=k=2$, and study \emph{flexibility} of solutions to \eqref{e:VK} and weak solutions to (\ref{e:MAdk}). Our main result is the following convex integration theorem, asserting that strict subsolutions to the von K\'arman system can be uniformly approximated by exact solutions with the H\"older $\mathcal{C}^{1,\alpha}$ regularity determined in \eqref{VKrange}: 

\begin{theorem}\label{th_final}
Let $\omega\subset\R^2$ be an open, bounded domain. 
Given vector fields $v, w\in\mathcal{C}^1(\bar\omega,\R^2)$,
and a matrix field $A\in\mathcal{C}^{s,\beta}(\bar\omega,\R^{2\times
  2}_\sym)$ with $s\in\{0,1\}$, $\beta\in (0,1]$, $s+\beta<2$, assume that:
\begin{equation}\label{subsol}
\frac{1}{2}(\nabla v)^T\nabla v + \sym\nabla
w < A \quad \; \mbox{ in } \; \bar\omega,
\end{equation}
in the sense of matrix inequalities. Then, for every exponent $\alpha$ satisfying:
\begin{equation}\label{VKrange} 
\alpha<\min\Big\{\frac{s+\beta}{2}, 1 \Big\},
\end{equation}
and for every $\epsilon>0$,  there exists $\tilde v, \tilde w\in\mathcal{C}^{1,\alpha}(\bar
\omega,\R^2)$, such that the following holds:
\begin{align*}
& \|\tilde v - v\|_0\leq \epsilon, \qquad \|\tilde w - w\|_0\leq \epsilon,
\nonumber \vspace{1mm}\\ 
& A - \big(\frac{1}{2}(\nabla \tilde v)^T\nabla \tilde v + \sym\nabla
\tilde w\big) =0 \quad \mbox{ in }\;\bar\omega. \nonumber
\end{align*}
\end{theorem}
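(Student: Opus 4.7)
My plan is to prove Theorem \ref{th_final} by a convex integration scheme of Nash--Kuiper type, adapted to the ``very weak'' system (\ref{VK}), in which the symmetric positive-definite \emph{defect}
\[ D := A - \Big(\frac{1}{2}(\nabla v)^T\nabla v + \sym\nabla w\Big) > 0 \]
is iteratively driven to zero. The construction will proceed in \emph{stages}, each parametrised by a mollification scale $\ell_n\to 0$ and a frequency $\lambda_n\to\infty$. A single stage consists of: (i) mollifying the current $(v,w,A)$ at scale $\ell_n$, at the cost of $O(\ell_n)$ in $\|\nabla v\|_0,\|\nabla w\|_0$ and $O(\ell_n^\beta)$ in $\|A\|_0$, while preserving the positivity of $D$; (ii) decomposing the smoothed defect as a finite sum
\[ D = \sum_{j=1}^N a_j(x)^2\,\eta_j\otimes\eta_j \]
of rank-one positive-semidefinite matrices with smooth amplitudes $a_j$ and fixed unit directions $\eta_j$, via a partition of unity on the open cone $\R^{2\times 2}_{\sym,+}$; and (iii) cancelling each rank-one term by one \emph{step} perturbation, composing the resulting $N$ steps into one stage.

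Each step is where the codimension $2$ assumption intervenes decisively. To remove a single term $a^2\eta\otimes\eta$, I perturb
\[
\tilde v = v + \frac{a}{\lambda_n}\Big(\sin(\lambda_n x\cdot\eta)\,p + \cos(\lambda_n x\cdot\eta)\,q\Big), \qquad \tilde w = w + \frac{1}{\lambda_n}W_{\lambda_n},
\]
with $(p,q)$ an \emph{orthonormal} pair in $\R^2$. The orthonormality collapses the leading quadratic contribution to $\frac{1}{2}(\nabla\tilde v)^T\nabla\tilde v - \frac{1}{2}(\nabla v)^T\nabla v$ to the \emph{oscillation-free} matrix $\frac{1}{2}a^2(\cos^2 + \sin^2)\eta\otimes\eta = \frac{1}{2}a^2\eta\otimes\eta$, matching the rank-one target pointwise (up to a constant factor in the amplitude). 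The remaining cross contributions $\sym[(\nabla v)^T(\nabla\tilde v - \nabla v)]$ oscillate at frequency $\lambda_n$ and are cancelled by choosing $W_{\lambda_n}$ as a suitable primitive of amplitude $O(\lambda_n^{-1})$, contributing $O(\lambda_n^{-1})$ to $\|\tilde w - w\|_0$. In codimension $1$ no such orthonormal pair is available, which is precisely the obstruction behind the weaker flexibility range of the very weak Monge--Amp\`ere equation in \cite{lewpak_MA}.

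Iterating with geometrically tuned parameters $(\ell_n,\lambda_n)$, typically $\ell_n\sim\lambda_n^{-1}$ and $\lambda_{n+1}=\lambda_n^b$ for a suitable $b>1$, I obtain a sequence $(v_n,w_n)$ converging to $(\tilde v,\tilde w)$ in $\mathcal{C}^{1,\alpha}$ whose limit solves the equation exactly. The $C^0$ loss per stage is of order $\lambda_n^{-1}$ and is summable below $\epsilon$; the $C^{1,\alpha}$ cost is of order $\lambda_n^\alpha$ times factors in $\ell_n^{-1}$. The critical balance is between the mollification error $\ell_n^\beta$, which must dominate the step error $O(\lambda_n^{-1})$ so that the subsolution inequality (\ref{subsol}) propagates to the next stage, and the $C^{1,\alpha}$ cost; this balance pins the flexibility exponent precisely at $\alpha<\beta/2$, yielding (\ref{VKrange}).

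The principal obstacle is the simultaneous bookkeeping over infinitely many stages: ensuring that the updated defect remains strictly positive (so the scheme can continue), that the accumulated $C^0$ distance stays below $\epsilon$, and that the $C^{1,\alpha}$ corrections are summable so the iteration converges. The scheme parallels those developed in \cite{lew_improved,lew_improved2,in_lew}, but saturating the sharp $\beta/2$ threshold requires tight step-level tracking showing that derivatives of the step perturbation fall predominantly on the rapidly oscillating trigonometric factors, producing $\lambda_n^\alpha$ rather than full $\lambda_n$ gains, and that the $C^{0,\beta}$ regularity of $A$ (rather than mere continuity) is used in full.
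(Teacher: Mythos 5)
Your proposal reproduces the classical Nash--Kuiper skeleton (mollify, decompose the defect into rank-one pieces, cancel each piece by an oscillatory perturbation, iterate stages), and that skeleton is indeed the outer shell of the paper's argument. However, there is a genuine gap at the heart of the matter: nothing in your scheme can produce the full range $\alpha<1$. In any stage construction of this type, the H\"older exponent obtained from the iteration is governed by the ratio $r$ between the blow-up rate of $\|\nabla^2\tilde v\|_0$ and the decay rate of the new defect, giving roughly $\alpha<\frac{1}{1+2r}$. With your step, $\nabla^2\big(\tfrac{a}{\lambda}\sin(\lambda x\cdot\eta)p\big)$ is genuinely of size $a\lambda$ --- your claim that ``derivatives fall predominantly on the oscillating factors, producing $\lambda_n^\alpha$ rather than full $\lambda_n$ gains'' is not a mechanism, it is exactly the full $\lambda$ loss --- while the residual errors of a single step ($\tfrac{a}{\lambda}\Gamma(\lambda x\cdot\eta)\nabla^2 v$, $\tfrac{1}{\lambda^2}\nabla a\otimes\nabla a$, and the cross terms between consecutive steps) only decay like one power of the frequency ratio. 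This pins $r$ at order one and hence caps $\alpha$ well below $1$ (at the level of $1/3$ or so), which is the regime of the earlier papers cited in the introduction, not the theorem you are asked to prove. Your attribution of the threshold to the balance with the mollification error $\ell_n^\beta$ is also off: that balance only produces the cap $\beta/2$; the constraint $\alpha<1$ has an entirely separate origin.

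The missing ideas are precisely the paper's new contributions, and none of them appear in your outline: (i) the integration-by-parts decomposition (Lemma \ref{lem_IBP}), which rewrites the zero-mean oscillatory error $\gamma(\lambda x_1)H$ as a symmetric gradient plus a rank-one remainder $G\,e_2\otimes e_2$ that is then absorbed by adjusting the amplitude of the \emph{second} corrugation to $b=\sqrt{a^2+G}$; (ii) a K\"all\'en-type iteration (Proposition \ref{prop1}) to remove the non-oscillatory part of $\tfrac{1}{\lambda^2}\nabla a\otimes\nabla a$, which has no zero mean and is inaccessible to integration by parts; (iii) the anisotropic (``unbalanced'') derivative estimates on the two components of $v$, which is where codimension two is actually used --- not, as you suggest, to build an orthonormal Nash spiral $(p,q)$, but to place the two Kuiper corrugations in distinct components so that the remainder $\tilde G\,e_1\otimes e_1$ produced by the second integration by parts is of strictly smaller size; and (iv) iterating this quadruple step $K$ times within a single stage, driving the blow-up/decay ratio to $(K+N)/(KN)\to 0$ and hence $\alpha\to 1$. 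Your rank-one partition-of-unity decomposition in place of the elliptic diagonalization of Lemma \ref{lem_diagonal} is a legitimate variant in principle, but without the four ingredients above the proposal cannot reach the claimed exponent range, so the proof does not close.
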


\smallskip

\noindent As a byproduct, we obtain the density, in the space of continuous functions, of H\"older 
solutions to the $d=k=2$ version of (\ref{e:MAdk}). Indeed, because of the symmetries in the table-valued operator $\mathfrak{Det}\,\nabla^2$, the Monge-Amp\`ere system reduces to the single equation (\ref{MA2d2k}) below, and we have:

\begin{corollary}\label{th_weakMA} 
For any $f\in L^{\infty} (\omega, \R)$ defined on an open, bounded, simply connected
domain $\omega\subset\mathbb{R}^2$ and for any exponent $\alpha\in (0,1)$, the set of $\mathcal{\mathcal{C}}^{1,\alpha}(\bar\omega, \R^2)$
weak solutions to:
\begin{equation}\label{MA2d2k} 
\det\nabla^2v^1 + \det\nabla^2v^2 = \langle \partial^2_{11}v, \partial^2_{22}v\rangle  - |\partial^2_{12}v|^2 = f \quad \mbox{ in } \; \omega,
\end{equation}
is dense in $\mathcal{\mathcal{C}}^0(\bar\omega, \R^2)$. Namely, every $v\in \mathcal{\mathcal{C}}^0(\bar\omega,\R^2)$ is the
uniform limit of some sequence
$\{v_n\in\mathcal{\mathcal{C}}^{1,\alpha}(\bar\omega,\R^2)\}_{n=1}^\infty$, such that: 
$\langle \partial^2_{11}v_n, \partial^2_{22}v_n\rangle  - |\partial^2_{12}v_n|^2 = f$ on $\omega$, for all $n\geq 1$.
\end{corollary} 

\medskip

\noindent We expect that, using the
corrugation ansatz of \cite{CHI2}, 
one can prove a similar result for isometric embeddings as well. Recall that, according to Poznyak's theorem (see discussion in
\cite[Chapter 2.3]{HH}), any smooth $2$-dimensional Riemannian metric has a
smooth local embedding in $\R^4$. The conjectured parallel density result, albeit only valid for H\"older
continuous solutions, would be stronger in the following sense: rather than
yielding existence of a single solution, it
would yield that an arbitrary subsolution can be
approximated by a $\mathcal{C}^{1,\alpha}$ solution.

\subsection{Comparison with the literature} Our present results, in addition to addressing the well-posedness of the important geometrical system (\ref{e:MAdk}), contributes to the growing body of work on the flexibility of $\mathcal{C}^{1,\alpha}$ isometric immersions and solutions to related PDEs, inspired by the foundational work of Nash and Kuiper \cite{Nash2, Kuiper}. Flexibility results analogous to Theorem \ref{th_final} for the most constrained case of codimension one ($k=1$) have been established in \cite{lewpak_MA} for $d=2$, in \cite{lew_conv} for  arbitrary $d\geq 2, k\geq 1$, in \cite{lew_improved,lew_improved2} for $d=2$ and arbitrary $k\geq 1$, and in \cite{in_lew} for $d=2, k=3$. For results on codimension-one isometric immersions, we refer to \cite{Borisov1965,Borisov2004,CDS,DIS1/5,CHI}. See also \cite{Li} for a flexibility result pertaining to the closely related $2$-Hessian equation. 

\medskip

\noindent In this work, we focus on the case $k=2$ and show that, surprisingly, for the dimension $d=2$ this case can be already considered as a "high codimension". Generally speaking, the higher the codimension, the more flexibility one expects, due to the system being less overdetermined, whereas the additional degrees of freedom manifest themselves as the admissibility of solutions of higher regularity. The case $k=2$ for system \eqref{e:VK} was previously studied in \cite{lew_conv} for arbitrary $d\geq 2$, and for the specific case $d=2$, it was included in the considerations of \cite{lew_improved,lew_improved2}. There, it was shown that Theorem \ref{th_final} holds with the restricted range $\alpha<\min\{\frac{\beta}{2}, \frac37\}$. 

\medskip

\noindent For the isometric immersion problem with $d=k=2$, Poznyak showed in \cite{Poz} that every smooth metric $g$ on $\bar B_R\subset \R^2$ admits a smooth isometric embedding $u:(\bar B_R,g)\to \R^4$. However, his proof does not extend to metrics $g\notin \mathcal{C}^{3,\alpha}$, nor does it establish flexibility in the sense of our main theorem. On the other hand, in \cite{Kallen} K\"all\'en considered $\mathcal{C}^{1,\alpha}$ isometric embeddings for rougher metrics $g\in \mathcal{C}^{\beta}$ with $0<\beta\leq 2$ in high codimension ($k\gg 1$), and proved flexibility in $\mathcal{C}^{1,\alpha}$ for any $\alpha<\min\{\frac{\beta}{2},1\}$. This is what we call "full flexibility", in contrast to classical rigidity theorems for $\mathcal{C}^2$ isometric immersions and $\mathcal{C}^2$ solutions of \eqref{e:MA}.
The codimension $k\gg 1$ in \cite{Kallen} depends on the dimension $d$ of the manifold, and an inspection of the proof shows that for the local version, $k\geq d(d+1)$ is sufficient. 
For $d=2$, an adaptation of K\"all\'en's proof, combined with a suitable coordinate transformation (as in \cite{DIS1/5} for the isometric embedding problem) or an additional perturbation on the domain (as in \cite{CS} for the Monge-Amp\`ere equation), shows that $k\geq 4$ is sufficient. A detailed proof in the Monge-Amp\`ere setting can be found in \cite{lew_improved2}. 
The central contribution of our work is therefore to show that in the two-dimensional case, this kind of "full flexibility" already occurs for codimension $k=2$. 

\smallskip

\subsection{Overview of the main estimates}

Our main technical contribution is given in terms of estimates gathered in the following "stage" construction, whose iteration via the Nash-Kuiper algorithm ultimately provides the proof of Theorem \ref{th_final}:

\begin{theorem}\label{thm_stage} 
Let $\omega\subset\R^2$ be an open, bounded, smooth planar domain. 
Fix two integers $N, K\geq 1$ and an exponent $\gamma\in (0,1)$.
Then, there exists $l_0\in (0,1)$ depending only on $\omega$, and there exists
$\sigma_0\geq 1$ depending on $\omega,\gamma, N, K$,
such that the following holds. Given the fields
$v,w\in\mathcal{C}^2(\bar\omega+\bar B_{2l}(0),\R^2)$, 
$A\in\mathcal{C}^{s,\beta}(\bar\omega+\bar B_{2l}(0),\R^{2\times
  2}_\sym)$ with $s\in \{0,1\}$, $\beta\in (0,1]$, defined on the
closed $2l$-neighbourhood of $\omega$, and given the positive constants $l,
\lambda, \mathcal{M}$ with the properties: 
\begin{equation}\label{Assu}
l\leq l_0,\qquad \lambda^{1-\gamma} l\geq\sigma_0, \qquad
\mathcal{M}\geq\max\{\|v\|_2, \|w\|_2, 1\},
\end{equation}
there exist $\tilde v, \tilde w\in\mathcal{C}^2(\bar \omega+\bar B_{l}(0),\R^2)$,
such that, denoting the defects:
\begin{equation}\label{defects}
\mathcal{D}=A -\big(\frac{1}{2}(\nabla v)^T\nabla v + \sym\nabla
w\big), \qquad \tilde{\mathcal{D}} =A -\big(\frac{1}{2}(\nabla \tilde
v)^T\nabla \tilde v + \sym\nabla\tilde w\big), 
\end{equation}
the following bounds are valid:
\begin{align*}
& \begin{array}{l}
\|\tilde v - v\|_1\leq C\lambda^{\gamma/2}\big(\|\mathcal{D}\|_0^{1/2}
+ l\mathcal{M}\big), \vspace{1.5mm}\\ 
\|\tilde w -w\|_1\leq C\lambda^{\gamma}\big(\|\mathcal{D}\|_0^{1/2}
+ l\mathcal{M}\big) \big(1+ \|\mathcal{D}\|_0^{1/2} + l\mathcal{M} +\|\nabla v\|_0\big), 
\end{array}\vspace{3mm} \tag*{(\theequation)$_1$}\refstepcounter{equation} \label{Abound12}\\
&  \begin{array}{l}
\|\nabla^2\tilde v\|_0\leq C \, \displaystyle{\frac{(\lambda
  l)^{K+N}\lambda^{\gamma/2}}{l}}\big(\|\mathcal{D}\|_0^{1/2} + l\mathcal{M}\big), 
\vspace{1.5mm}\\ 
\|\nabla^2\tilde w\|_0\leq C \, \displaystyle{\frac{(\lambda
    l)^{K+N}\lambda^{\gamma}}{l}} 
\big(\|\mathcal{D}\|_0^{1/2} + l\mathcal{M}\big) 
\big(1+\|\mathcal{D}\|_0^{1/2} + l\mathcal{M}+ \|\nabla v\|_0\big),  
\end{array} \vspace{3mm} \tag*{(\theequation)$_2$}\label{Abound22} \\ 
& \begin{array}{l}
\|\tilde{\mathcal{D}}\|_0\leq C\Big(l^{s+\beta} \|A\|_{s,\beta} +
\displaystyle{\frac{\|\mathcal{D}\|_0 +(l\mathcal{M})^2}{(\lambda
    l)^{KN}}}\Big). 
\end{array} \tag*{(\theequation)$_3$} \label{Abound32}
\end{align*}
Above, the norms of the maps $v, w, A, \mathcal{D}$ and $\tilde v,
\tilde w, \tilde{\mathcal{D}}$ in \ref{Abound12} - \ref{Abound32}
are taken on the respective domains of the maps' definiteness.
The constants $C$ depend only on $\omega, \gamma, N, K$.
\end{theorem}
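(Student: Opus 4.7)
My approach is a Nash--Kuiper stage construction adapted to the very weak Monge--Amp\`ere system \eqref{VK}, exploiting codimension two through the simultaneous use of two orthogonal directions in the target $\R^2$. The three building blocks are: (i) mollify $v, w, A$ at scale $l$; (ii) write the (positive definite) mollified defect as a finite sum of smooth rank-one matrices; (iii) cancel each rank-one piece by an oscillatory \emph{spiral} corrugation of $v$ at frequency $\lambda$, compensated by an explicit correction of $w$. Combining these primitive steps in a $K$-by-$N$ nested scheme will produce the sharp defect bound \ref{Abound32}, with the $\mathcal{C}^1$ and $\mathcal{C}^2$ bounds \ref{Abound12}, \ref{Abound22} following by bookkeeping.

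\textbf{Mollification, decomposition and primitive step.} First I would set $v_l=v*\phi_l$, $w_l=w*\phi_l$, $A_l=A*\phi_l$ for a standard Friedrichs mollifier, with the usual estimates
\begin{align*}
& \|v-v_l\|_1+\|w-w_l\|_1\leq Cl\mathcal{M},\qquad \|\nabla^k v_l\|_0+\|\nabla^k w_l\|_0\leq Cl^{2-k}\mathcal{M}\;\;(k\geq 2),\\
& \|A-A_l\|_0\leq Cl^\beta\|A\|_{0,\beta}.
\end{align*}
These propagate into the mollified defect $\mathcal{D}_l:=A_l-\tfrac{1}{2}(\nabla v_l)^T\nabla v_l-\sym\nabla w_l$, which differs from $\mathcal{D}$ by at most $C\bigl(l^\beta\|A\|_{0,\beta}+(l\mathcal{M})^2\bigr)$. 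After enlarging by a uniform multiple of $\mathrm{Id}$, $\mathcal{D}_l$ takes values in a compact family of strictly positive definite symmetric matrices, and a Conti--De Lellis--Sz\'ekelyhidi type partition of unity yields fixed unit vectors $\xi_1,\ldots,\xi_J\in\R^2$ and smooth nonnegative $a_j(x)$ with $\mathcal{D}_l=\sum_j a_j^2\,\xi_j\otimes\xi_j$, whose $\mathcal{C}^k$-norms inherit polynomial-in-$l^{-1}$ bounds. For each piece I would add the spiral
$$\delta v(x)=\frac{a(x)}{\lambda}\Big(\sin(\lambda x\cdot\xi)\,e_1+\cos(\lambda x\cdot\xi)\,e_2\Big),$$
so that $(\nabla\delta v)^T\nabla\delta v=a^2\,\xi\otimes\xi+O(\lambda^{-1})$ with \emph{no} oscillatory residue (since $\sin^2+\cos^2\equiv 1$), and the cross term $\sym\bigl((\nabla v_l)^T\nabla\delta v\bigr)$ is a sum of slow amplitudes times $\sin,\cos$ at frequency $\lambda$; an integration by parts against $1/\lambda$ rewrites it as $\sym\nabla(\delta w)$ up to a smaller error, defining the compensating update $w\mapsto w+\delta w$. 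A single primitive step thereby reduces one rank-one component of the defect by $(\lambda l)^{-1}$, while adding $\sim a$ to $\|\cdot\|_1$ and $\sim\lambda a$ to $\|\nabla^2\cdot\|_0$.

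\textbf{Iteration and main obstacle.} To reach the $(\lambda l)^{-KN}$ decay in \ref{Abound32}, I would nest $N$ rank-one cancellations inside each of $K$ successive sub-stages at mildly increasing frequencies, arranging the supports and phases so that only one ``outer'' frequency at each level contributes at full strength to $\nabla^2$; this is what produces the asymmetric $(\lambda l)^{K+N}$ in \ref{Abound22}. The slacks $\lambda^{\gamma/2}$ and $\lambda^\gamma$ in \ref{Abound12} and \ref{Abound22} then absorb the logarithmic losses coming from the product rule applied to the oscillating carriers and from the mollification. The main difficulty is precisely arranging this bookkeeping so that the defect contracts at the full combinatorial rate $(\lambda l)^{-KN}$ while the $\mathcal{C}^2$-norm grows only as $(\lambda l)^{K+N}$: this demands a cancellation, available only in codimension two, in which the $\sin,\cos$ pair eliminates the oscillatory residue entirely and lets the inner steps contribute to the defect without contributing to $\nabla^2$ beyond a subleading level. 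A secondary delicate point is the construction of $\delta w$, whose second derivative must scale as $\lambda\cdot(1+\|\nabla v\|_0)\cdot a$—explaining the $\|\nabla v\|_0$-factor in the $w$-bounds—and which must be supported so that $\tilde v,\tilde w$ remain defined on the prescribed neighbourhood $\bar\omega+\bar B_l(0)$ of $\omega$, as required.
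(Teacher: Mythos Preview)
Your proposal has a genuine gap: the spiral ansatz does not produce the mechanism that yields the crucial asymmetry between the $(\lambda l)^{K+N}$ blow-up and the $(\lambda l)^{KN}$ decay, and your description of the nested iteration does not supply one either.

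The spiral $\delta v=\frac{a}{\lambda}(\sin(\lambda x\cdot\xi)e_1+\cos(\lambda x\cdot\xi)e_2)$ spends \emph{both} target codimensions on a \emph{single} rank-one defect. The benefit you identify, that $\sin^2+\cos^2=1$ removes the oscillation from the quadratic term $(\nabla\delta v)^T\nabla\delta v$, is real but minor; it only suppresses one specific oscillatory error. After this step the defect drops by a factor $(\lambda l)^{-1}$ while $\|\nabla^2 v\|_0$ grows by $\lambda l$, so repeated application gives ratio $1$ between blow-up and decay exponents --- this is the classical Nash--Kuiper rate, leading to $\alpha<1/3$, not $\alpha<1$. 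Your sentence about ``arranging the supports and phases so that only one outer frequency at each level contributes at full strength'' is not a mechanism; nothing in the spiral construction prevents every step from contributing its full frequency to $\nabla^2\tilde v$.

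The paper's route is essentially different. It uses the diagonal decomposition $\mathcal D+\mathrm{sym}\,\nabla\Psi=a^2\mathrm{Id}_2$ (Lemma~\ref{lem_diagonal}), then assigns the two rank-one pieces $a^2 e_1\otimes e_1$ and $a^2 e_2\otimes e_2$ to \emph{separate} codimensions, via one-dimensional Kuiper corrugations $\frac{a}{\lambda}\Gamma(\lambda x_1)e_1$ and $\frac{b}{\mu}\Gamma(\mu x_2)e_2$ at distinct frequencies $\lambda<\mu$. This makes the resulting $\tilde v$ \emph{unbalanced}: $\tilde v^1$ oscillates fast in $x_1$ and slowly in $x_2$, and conversely for $\tilde v^2$. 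The errors from the first corrugation are of the form $\gamma(\lambda x_1)H$ with $\gamma$ mean-zero; an integration-by-parts identity (Lemma~\ref{lem_IBP}) rewrites each such term as $\mathrm{sym}\,\nabla w_c + G\,e_2\otimes e_2$ plus a remainder of order $(\lambda/\mu_{prev})^{-N}$. The residual $G\,e_2\otimes e_2$ is then absorbed exactly into the amplitude $b$ of the second corrugation. The analogous residual from the second corrugation lies in $e_1\otimes e_1$, and its size is governed by $\partial_1$-derivatives of $\tilde v^2$ --- which are \emph{small} precisely because of the unbalanced structure. This is why each pass through the pair of codimensions gains a genuine factor beyond the naive count. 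The non-oscillatory error $\frac{1}{\lambda^2}\nabla a\otimes\nabla a$ is handled separately by a K\"all\'en-type fixed-point iteration (Proposition~\ref{prop1}), which is what the index $N$ parametrises; the index $K$ counts how many times the full codimension-pair step (Proposition~\ref{prop2}) is repeated across a geometric ladder of frequencies. None of this structure is available once both codimensions have been consumed by a spiral.
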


\smallskip

\noindent 
We anticipate that assigning $N$ sufficiently large, the quotient $r=r_{K,N}$ of the
blow-up rate of $\|\nabla^2\tilde v\|_0$
with respect to the rate of decay of $\|\tilde{\mathcal{D}}\|_0$, can
be taken arbitrarily close to $1/K$, whereas
this last quotient approaches $0$ for large $K$:
\begin{equation*}
\lim_{K\to\infty}\lim_{N\to\infty} r_{K,N} =
\lim_{K\to\infty}\lim_{N\to\infty} \frac{K+N}{KN} = 
\lim_{K\to\infty}\frac{1}{K} = 0.
\end{equation*}
Since the H\"older regularity exponent deduced from iterating the ``stage'' as
in Theorem \ref{thm_stage} depends only on $r$ and in fact it equals $\frac{1}{1+2r}$ (see
Theorem \ref{th_NK}), this
implies the claimed range (\ref{VKrange}). We now describe the new ideas allowing to attain this arbitrarily small quotient.

\smallskip

\subsection{Overview of the strategy of proofs} \label{sub_over}

For an extensive description and comparison of the different techniques used in proving flexibility of the Monge-Amp\`ere system, we refer to \cite{in_lew}. In the following, we only present the new contributions of the present work. As customary for $d = 2$ (see \cite{DIS1/5, CS, lew_improved, lew_improved2, in_lew}), the new fields $\tilde v, \tilde w$ in Theorem \ref{thm_stage} are constructed from $v, w$ by first diagonalizing the associated defect $\mathcal{D}$ modulo a symmetric gradient:
\begin{equation*}
\mathcal{D} \doteq A- (\frac{1}{2}(\nabla v)^T\nabla v + \sym\nabla w) = a^2 \mathrm{Id}_2 - \mathrm{sym} \nabla \Psi,
\end{equation*}
see the details of this decomposition in Lemma \ref{lem_diagonal}.
Then, two perturbations are introduced in the form of highly oscillatory corrugations, designed to replace, respectively, each of the two rank-one components of the diagonalized defect: $a^2 e_1 \otimes e_1$ and $a^2 e_2 \otimes e_2$, by the lower order defects $E^1$ and $E^2$. These perturbations are added to $v$ in distinct codimensions, according to the following ansatz, where the frequencies $\lambda \leq \mu$ and the secondary amplitude $b$ are be chosen appropriately, and where $\Gamma$ is a suitable periodic function (see Lemma \ref{lem_step2}):
\begin{equation}\label{ansatz}
\tilde v = v + \frac{a}{\lambda} \Gamma(\lambda x_1) e_1 + \frac{b}{\mu} \Gamma(\mu x_2) e_2.
\end{equation}
 The matching perturbations added to $w$ are as in \cite{lew_conv}.
We note that the ansatz (\ref{ansatz}) leads to an "unbalanced" map $\tilde v$, whose component $\tilde v^1$ oscillates rapidly in the $x_1$ direction and slowly $x_2$, while the second component $\tilde v^2$ behaves conversely. This observation is a point of departure for the new construction in this paper, ultimately leading to Theorem \ref{thm_stage}, as we now explain.

\medskip

\noindent Our first novel contribution is the observation, parallel to that in \cite{CHI2}, that refining the ansatz for the perturbation in $w$ enables a partial cancellation of the errors $E^1$ and $E^2$. Specifically, all but one of the terms in $E^1$ take the form $\gamma(\lambda x_1) H$, where $\gamma$ is a periodic function with zero mean, and $H$ is a symmetric matrix field oscillating at a much lower frequency than $\lambda$. Through an "integration by parts" argument (see Lemma \ref{lem_IBP}, resp. \cite[Proposition 2.4]{CHI2}) we express, up to an error of arbitrarily small size:
\begin{equation*}
\gamma H \approx \mathrm{sym}\, \nabla w_c + G e_2 \otimes e_2, 
\end{equation*}
for a suitable vector field $w_c$. Subtracting $w_c$ from the original ansatz on $\tilde w$ thus cancels $\gamma H$, up to a new error term $G e_2 \otimes e_2$, plus the lower order terms. The new error has the same size as $\gamma H$, however, it can be canceled exactly by choosing the amplitude of the second perturbation to be $b = \sqrt{a^2 - G}$. This effectively allows for the removal of the first rank-one component $a^2 e_1 \otimes e_1$ in the original defect, with almost no extra error.
The same procedure is then carried out for the second perturbation. All but one of the terms in $E^2$ take the form $\tilde{\gamma}(\lambda x_2) \tilde{H}$, where $\tilde{\gamma}$ and $\tilde{H}$ enjoy analogous properties as $\gamma$ and $H$. 
Again, we write:
\begin{equation}
\tilde{\gamma} \tilde{H} \approx \mathrm{sym}\,\nabla \tilde{w}_c + \tilde{G} e_1 \otimes e_1.
\end{equation}
Augmenting the ansatz for $\tilde{w}$ with $\tilde{w}_c$ cancels $\tilde{H}$ up to the error term $\tilde{G} e_1 \otimes e_1$. Here, the second codimension plays a critical role: due to the unbalanced nature of $\tilde{v}$ from equation (\ref{ansatz}), the magnitude of $\tilde{G}$ is significantly smaller than that of $\tilde{H}$. Initially, when perturbations are added only once, this effect is not yet visible, as $v$ satisfies balanced estimates and $\lambda$ can be chosen arbitrarily close to the oscillation frequency of $v$ (which makes the map $v+\frac{a}{\lambda} \Gamma(\lambda x_1) e_1 $ still balanced). However, $\tilde {v}$ now possesses unbalanced estimates, so upon iterating the described procedure starting from $\tilde {v}$, the corresponding $\tilde{G}$ exhibits improved estimates.

\medskip

\noindent Our second novel contribution, 
is that repeating the above procedure (whose single iteration is presented in Proposition \ref{prop2}), for the total of $K$ times and across $K$ pairs of codimensions, yields the rapid improvement of the estimates in the "stage" Theorem \ref{thm_stage}.
Along the way, we observe that one error term in $E^1$ remains inaccessible to the integration by parts argument, namely a term of the form $\gamma^2 H$, since its oscillating factor does not have zero mean over a period. This term is handled by decomposing it as:
\begin{equation*}
\gamma^2 H = \Big(\gamma^2 - \fint \gamma^2\;\mbox{d}t \Big) H + \fint \gamma^2\;\mbox{d}t\; H.
\end{equation*}
The first term in the right hand side above is managed again via the
"integration by parts" argument, while the second term is absorbed
into the decomposition in Lemma \ref{prop1}, using a version of the K\"all\'en iteration.

\subsection{Organization of the paper and notation} 

In section \ref{sec_step} we present the preparatory lemmas on the mollification and commutator estimates, the basic Kuiper corrugation step construction, and the integration by parts decomposition statements. Section \ref{sec_kallen} contains a simple version of the K\"all\'en's iteration, written for the general scaled defect fields. The iteration counter equals $N$ as in the statement of Theorem \ref{thm_stage}. Section \ref{sec_stage_prep} is devoted to our new "step" construction in which we utilize the two codimensions to reduce the defect via the two aforementioned decomposition techniques. In section \ref{sec_khamsa} we iterate such steps towards a "stage" for a total of $K$ times, going over $K$ pairs of codimensions and proving Theorem \ref{thm_stage}. Finally, section \ref{sec4} recalls the Nash-Kuiper iteration scheme and provides the proof of Theorem \ref{th_final}.

\medskip

\noindent By $\mathbb{R}^{2\times 2}_{\sym}$ we denote the space of symmetric
$2\times 2$ matrices. The space of H\"older continuous vector fields
$\mathcal{C}^{m,\alpha}(\bar\omega,\R^k)$ consists of restrictions of
all $f\in \mathcal{C}^{m,\alpha}(\mathbb{R}^2,\R^k)$ to the closure of
an open, bounded domain $\omega\subset\R^2$. The
$\mathcal{C}^m(\bar\omega,\R^k)$ norm of this restriction is
denoted by $\|f\|_m$, while its H\"older norm in $\mathcal{C}^{m,
  \gamma}(\bar\omega,\R^k)$ is $\|f\|_{m,\gamma}$. 
By $C$ we denote a universal constant that may change from line to
line, but it depends only on the specified parameters.

\section{Four preparatory statements}\label{sec_step}

In this section, we exhibit the four constructions that will be used in the course of our convex
integration algorithm. These are: the approximation estimates, the corrugation definition, and three (two, up to symmetries) decomposition constructions. The first lemma below gathers the convolution estimates and
the commutator estimate from \cite{CDS}:

\begin{lemma}\label{lem_stima}
Let $\phi\in\mathcal{C}_c^\infty(\R^2,\mathbb{R})$ be a standard
mollifier that is nonnegative, radially symmetric, supported on the
unit ball $B(0,1)\subset\R^2$ and such that $\int_{\mathbb{R}^d} \phi \dx = 1$. Denote: 
$$\phi_l (x) = \frac{1}{l^d}\phi(\frac{x}{l})\quad\mbox{ for all
}\; l\in (0,1], \;  x\in\R^2.$$
Then, for every $f,g\in\mathcal{C}^0(\mathbb{R}^2,\R)$, every
$m\geq 0$ and $s\in\{0,1\}$, $\beta\in (0,1]$, there holds:
\begin{align*}
& \|\nabla^{(m)}(f\ast\phi_l)\|_{0} \leq
\frac{C}{l^m}\|f\|_0,\tag*{(\theequation)$_1$}\vspace{1mm} \refstepcounter{equation} \label{stima1}\\
& \|f - f\ast\phi_l\|_0\leq l^{s
+\beta} \|f\|_{s,\beta},\tag*{(\theequation)$_2$} \vspace{1mm} \label{stima2}\\
& \|\nabla^{(m)}\big((fg)\ast\phi_l - (f\ast\phi_l)
(g\ast\phi_l)\big)\|_0\leq {C}{l^{2- m}}\|\nabla f\|_{0}
\|\nabla g\|_{0}, \tag*{(\theequation)$_3$} \label{stima4}
\end{align*}
with constants $C>0$ depending only on $m$. The bound \ref{stima2} is a simplified version of:
$$ \|f - f\ast\phi_l\|_0 \leq \min\big\{l^2\|\nabla^{2}f\|_0, l^{1+\beta}\|\nabla f\|_{0,\beta}, l\|\nabla f\|_0, {l^\beta}\|f\|_{0,\beta}\big\}.$$
\end{lemma}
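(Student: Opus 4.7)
The plan is to prove the three estimates separately, each via a direct computation that exploits the scaling and symmetry of the mollifier $\phi_l$.

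For \ref{stima1}, I would pass all derivatives onto the mollifier through $\nabla^{(m)}(f\ast\phi_l) = f\ast\nabla^{(m)}\phi_l$ and apply Young's convolution inequality in the $L^\infty \ast L^1$ form. The rescaling $\nabla^{(m)}\phi_l(x) = l^{-d-m}(\nabla^{(m)}\phi)(x/l)$ gives $\|\nabla^{(m)}\phi_l\|_{L^1} = l^{-m}\|\nabla^{(m)}\phi\|_{L^1}$, yielding the claim with a constant depending only on $\phi$ and $m$.

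For \ref{stima2}, I would represent $(f - f\ast\phi_l)(x) = \int_{B_l(0)} \phi_l(y)\bigl(f(x) - f(x-y)\bigr)\,\mathrm{d}y$ and estimate the difference $f(x) - f(x-y)$ in each of the three claimed ways. The H\"older and Lipschitz bounds are immediate from $|y| \leq l$ on the support of $\phi_l$. The $\mathcal{C}^2$ bound, which is the sharpest and the only one that invokes the geometric hypothesis on $\phi$, follows from the second-order Taylor expansion
\begin{equation*}
f(x-y) = f(x) - \nabla f(x)\cdot y + \int_0^1 (1-t)\,y^T\nabla^2 f(x-ty)\,y\,\mathrm{d}t;
\end{equation*}
integrated against $\phi_l(y)\,\mathrm{d}y$, the linear term vanishes due to $\int \phi_l(y)\,y\,\mathrm{d}y = 0$, which holds by radial symmetry of $\phi$, and only the $Cl^2 \|\nabla^2 f\|_0$ Taylor remainder survives.

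For \ref{stima4}, which I expect to be the main obstacle, I would recast the commutator in the symmetric double-integral form
\begin{equation*}
r_l(f,g)(x) := (fg)\ast\phi_l(x) - (f\ast\phi_l)(x)(g\ast\phi_l)(x) = \tfrac{1}{2}\int\!\!\int \phi_l(x-z)\phi_l(x-w)\bigl(f(z)-f(w)\bigr)\bigl(g(z)-g(w)\bigr)\,\mathrm{d}z\,\mathrm{d}w,
\end{equation*}
obtained by writing both terms on the left as double integrals against $\phi_l(x-z)\phi_l(x-w)$, using $\int\phi_l = 1$, and symmetrizing under the swap $z\leftrightarrow w$. The key feature is that the factors $(f(z)-f(w))(g(z)-g(w))$ carry no $x$-dependence, so every $x$-derivative falls onto $\phi_l(x-z)\phi_l(x-w)$ via Leibniz. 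The support constraints $|x-z|,|x-w|\leq l$ force $|z-w|\leq 2l$, hence $|f(z)-f(w)|\leq 2l\|\nabla f\|_0$ and $|g(z)-g(w)|\leq 2l\|\nabla g\|_0$, while \ref{stima1} gives $\|\nabla^{(k)}\phi_l\|_{L^1}\leq Cl^{-k}$ for each mollifier factor. Distributing the $m$ derivatives via $\binom{m}{k}$ Leibniz and summing then yields the claimed $Cl^{2-m}\|\nabla f\|_0\|\nabla g\|_0$ rate. The naive Constantin--E--Titi identity $r_l(f,g)=\int\phi_l(y)\,\delta_y f\,\delta_y g\,\mathrm{d}y - (f-f\ast\phi_l)(g-g\ast\phi_l)$, with $\delta_y h(x)=h(x-y)-h(x)$, suffices only for small $m$, since differentiating its second summand further would force higher derivatives of $f$ or $g$ to appear; the symmetric representation above is what makes the full range of $m$ accessible under only $\nabla f,\nabla g\in L^\infty$.
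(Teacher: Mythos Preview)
Your argument is correct throughout. The paper itself does not prove this lemma: it states the three estimates and attributes them to \cite{CDS}, so there is no in-paper proof to compare against. Your treatment of \ref{stima1} and \ref{stima2} is the standard one; your handling of \ref{stima4} via the symmetrized double-integral identity
\[
r_l(f,g)(x)=\tfrac12\int\!\!\int \phi_l(x-z)\,\phi_l(x-w)\,\bigl(f(z)-f(w)\bigr)\bigl(g(z)-g(w)\bigr)\,\mathrm{d}z\,\mathrm{d}w
\]
is a clean way to push all $x$-derivatives onto the mollifier factors while keeping only first differences of $f$ and $g$, and your remark that the unsymmetrized Constantin--E--Titi decomposition would not directly accommodate large $m$ under merely $\nabla f,\nabla g\in L^\infty$ is accurate.
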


\medskip

\noindent The next result is specific to dimension $d=2$. We reformulate \cite[Proposition 3.1]{CS}, see also \cite[Lemma 2.3]{lew_improved}, obtaining the defect decomposition with components that enjoy the natural "equidistributed" elliptic estimates:

\begin{lemma}\label{lem_diagonal}
Let $\omega\subset\R^2$ be an open, bounded, Lipschitz set. There exist maps: 
$$\bar\Psi: L^2(\omega,\R^{2\times
  2}_\sym)\to W^{1,2}(\omega,\R^2), \qquad \bar a: L^2(\omega,\R^{2\times  2}_\sym)\to L^{2}(\omega,\R), $$ 
which are linear, continuous, and such that:
\begin{itemize}
\item[(i)] for all $H\in L^2(\omega,\R^{2\times 2}_\sym)$ there holds:
  $H+ \sym\nabla \big(\bar\Psi(H)\big) = \bar a(H)\Id_2$,\vspace{1mm}
\item[(ii)] $\bar\Psi(\Id_2) \equiv 0$ and $\bar a(\Id_2) \equiv 1$ in
  $\omega$, \vspace{1mm}
\item[(iii)] for all $m\geq 0$ and $\gamma\in (0,1)$, if $\omega$ is
  $\mathcal{C}^{m+2,\gamma}$ regular then the maps $\bar\Psi$ and $\bar a$ are continuous from
  $\mathcal{C}^{m,\gamma}(\bar\omega,\R^{2\times 2}_\sym)$ to
  $\mathcal{C}^{m+1,\gamma}(\bar\omega, \R^2)$ and
  to $\mathcal{C}^{m,\gamma}(\bar\omega, \R)$, respectively, so that:
\begin{equation}\label{diag_bounds}
\|\bar\Psi (H)\|_{m+1,\gamma}\leq C \|H\|_{m,\gamma} \mbox{ and }
~ \|\bar a (H)\|_{m,\gamma}\leq C \|H\|_{m,\gamma} \quad \mbox{ for all
}\; H\in L^2(\omega,\R^{2\times 2}_\sym).
\end{equation}
\end{itemize} 
The constants $C$ above depend on $\omega$, $m, \gamma$ but not on
$H$. Also, there exists $l_0>0$ depending only on $\omega$, such that
(\ref{diag_bounds}) are uniform on 
the closed $l$-neighbourhoods $\{\bar\omega+ \bar B_l(0)\}_{l\in (0,l_0)}$ of $\omega$.
\end{lemma}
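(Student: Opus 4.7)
The plan is to exploit the two-dimensional complex structure of the equation $H + \sym\nabla\Psi = \bar a(H)\,\Id_2$ by reducing it, under the identification $\R^2\cong\mathbb{C}$ and $F:=\Psi_1+i\Psi_2$, to a single $\bar\partial$-equation. Taking the trace-free symmetric part of the identity yields
$$\bar\partial F \;=\; \tfrac{1}{2}\bigl((H_{22}-H_{11})-2iH_{12}\bigr),$$
while the tracial part forces the algebraic relation $a=\tfrac{1}{2}(\mathrm{tr}\,H+\mathrm{div}\,\Psi)$. Crucially, the right-hand side of the $\bar\partial$-equation depends only on the deviatoric part $H^\circ:=H-\tfrac{1}{2}(\mathrm{tr}\,H)\Id_2$ of $H$, a feature that will be essential in arranging the normalization $\bar\Psi(\Id_2)\equiv 0$.

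To build the operators, I would first fix $l_0>0$ small enough that all closed neighborhoods $\bar\omega+\bar B_l(0)$, $l\in(0,l_0)$, remain contained in a single bounded, smooth, simply connected reference domain $\tilde\omega$ of the same regularity class as $\omega$. I would select a bounded linear extension operator $E\colon\mathcal{C}^{m,\gamma}(\bar\omega+\bar B_{l_0}(0),\R^{2\times 2}_\sym)\to\mathcal{C}^{m,\gamma}_c(\tilde\omega,\R^{2\times 2}_\sym)$, together with its $L^2$-analogue, whose norm uniformly controls the norms of its restrictions to each smaller $l$-neighborhood. Apply $E$ to $H^\circ$ only, set
$$g\;:=\;\tfrac{1}{2}\bigl((EH^\circ)_{22}-(EH^\circ)_{11}\bigr)-i(EH^\circ)_{12},$$
and invert $\bar\partial$ globally via the Cauchy--Pompeiu convolution to produce $F$. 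Then define $\bar\Psi(H)$ as the restriction of $(\mathrm{Re}\,F,\mathrm{Im}\,F)$ to $\omega$, and set $\bar a(H):=\tfrac{1}{2}\bigl(\mathrm{tr}\,H+\mathrm{div}\,\bar\Psi(H)\bigr)$.

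Property (i) is then the Cauchy--Pompeiu identity rewritten as a matrix equation, combined with the defining formula for $\bar a$. For (ii), when $H\equiv\Id_2$ one has $H^\circ\equiv 0$, whence $E(H^\circ)\equiv 0$ by linearity of $E$, whence $F\equiv 0$, yielding $\bar\Psi(\Id_2)\equiv 0$ and $\bar a(\Id_2)=\tfrac{1}{2}(2+0)=1$. The $L^2\to W^{1,2}$ continuity of $\bar\Psi$ reduces to the $L^2$-boundedness of the Beurling transform $\partial F=Bg$, a classical two-dimensional Calder\'on--Zygmund operator, and the higher regularity in (iii) reduces to the well-known gain of one H\"older derivative by the Cauchy transform, i.e.\ the continuity $T\colon\mathcal{C}^{m,\gamma}_c(\tilde\omega)\to\mathcal{C}^{m+1,\gamma}(\R^2)$; composition with $E$ and restriction back to $\omega$ produces the bounds in (\ref{diag_bounds}).

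I expect the main obstacle to be arranging the extension $E$ so that its operator norm is uniform on the whole family $\{\bar\omega+\bar B_l(0)\}_{l\in(0,l_0)}$ simultaneously in every relevant function space, rather than depending on $l$. This is handled by constructing $E$ once on the largest neighborhood $\bar\omega+\bar B_{l_0}(0)$ and using its restrictions for all smaller $l$: the sufficient regularity of $\partial\omega$ guarantees that the enlarged domains form a smooth family on which Stein-type extension and interior Calder\'on--Zygmund estimates remain uniform. All other steps then transfer cleanly and yield the asserted constants $C$ depending only on $\omega$, $m$ and $\gamma$.
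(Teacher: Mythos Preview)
The paper does not prove this lemma; it states it as a reformulation of \cite[Proposition 3.1]{CS} (see also \cite[Lemma 2.3]{lew_improved}) and gives no argument of its own, so there is nothing in the present text to compare against directly. Your $\bar\partial$ route is correct and essentially complete in outline: the trace-free part of $H+\sym\nabla\Psi=a\,\Id_2$ is exactly the single complex equation $\bar\partial F=\tfrac12(H_{22}-H_{11})-iH_{12}$, the Cauchy transform inverts it with a gain of one H\"older derivative, the Beurling transform gives the $L^2\to W^{1,2}$ bound on $\nabla F$, and feeding only the deviatoric part $H^\circ$ through the extension makes the normalization (ii) automatic. For what it is worth, the construction in \cite{CS} goes instead through the Laplacian (applying $\curl\,\curl$ to (i) yields a Poisson equation for $a$, after which $\Psi$ is recovered by a second elliptic solve); both routes deliver the same elliptic gain, yours in a single complex step, theirs staying real and making the boundary-value dependence explicit.

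The one place to tighten is the uniformity in $l$. Your operator $E$ acts on functions defined on $\bar\omega+\bar B_{l_0}(0)$, but the lemma asks for operators on every $\bar\omega+\bar B_l(0)$ with $l<l_0$, with constants independent of $l$. For $H$ given only on the smaller set you must precompose with an extension $\bar\omega+\bar B_l(0)\to\bar\omega+\bar B_{l_0}(0)$ whose norm does not degenerate as $l\downarrow 0$; this is available because the domains form a $\mathcal{C}^{m+2,\gamma}$-smooth one-parameter family, but it should be stated explicitly rather than folded into the phrase ``using its restrictions for all smaller $l$''.
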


\medskip

\noindent As the next ingredient, we recall the 
``step'' construction from \cite[Lemma 2.1]{lew_conv}, 
in which a single codimension is used to cancel one rank-one defect of the form
$a(x)^2e_i\otimes e_i$:

\begin{lemma}\label{lem_step2}
Let $v,w\in \mathcal{C}^1(\R^2, \R^{2})$, $\lambda>0$ and $a\in
\mathcal{C}^2(\R^2,\R)$ be given. Denote: 
$$\Gamma(t) = 2\sin t,\quad \bar\Gamma(t) = \frac{1}{2}\cos (2t),\quad
\dbar\Gamma(t) = -\frac{1}{2}\sin (2t),\quad \tbar\Gamma(t)=1- \frac{1}{2}\cos(2t),$$
and for a fixed  $i,k=1, 2$ define:
\begin{equation}\label{defi_per2}
\tilde v = v + \frac{a(x)}{\lambda} \Gamma(\lambda x_i) e_k,\quad 
\tilde w = w -\frac{a(x)}{\lambda} \Gamma(\lambda x_i)\nabla v^k
+ \frac{a(x)}{\lambda^2} \bar\Gamma(\lambda x_i)\nabla a(x)
+ \frac{a(x)^2}{\lambda} \dbar\Gamma(\lambda x_i)e_i.
\end{equation}
Then, the following identity is valid on $\R^2$:
\begin{equation}\label{step_err2}
\begin{split}
& \big(\frac{1}{2}(\nabla \tilde v)^T \nabla \tilde v + \sym\nabla \tilde w\big) - 
\big(\frac{1}{2}(\nabla v)^T \nabla v + \sym\nabla w\big) - a(x)^2e_i\otimes e_i
\\ & = -\frac{a}{\lambda} \Gamma(\lambda x_i)\nabla^2 v^k
+ \frac{a}{\lambda^2} \bar\Gamma(\lambda x_i) \nabla^2 a
+ \frac{1}{\lambda^2}\tbar\Gamma(\lambda x_i)\nabla a\otimes\nabla a.
\end{split}
\end{equation}
\end{lemma}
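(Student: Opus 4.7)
The proof is a direct verification by expanding both sides and matching terms, so the strategy is essentially to be organized about the bookkeeping. First I would compute $\nabla\tilde v$: since $\tilde v$ differs from $v$ only in the $k$-th component, one has $\nabla\tilde v = \nabla v + e_k\otimes X$, where $X := \frac{\Gamma(\lambda x_i)}{\lambda}\nabla a + a\Gamma'(\lambda x_i)\, e_i$. Expanding the quadratic form then yields
\[
\tfrac{1}{2}\bigl((\nabla\tilde v)^T\nabla\tilde v - (\nabla v)^T\nabla v\bigr) = \sym\bigl[\nabla v^k\otimes X\bigr] + \tfrac{1}{2}\, X\otimes X,
\]
so that apart from the bulk quadratic $\tfrac{1}{2} X\otimes X$, the only contribution mixing $v$ with the perturbation is the cross term $\sym[\nabla v^k\otimes X]$.

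Next I would differentiate $\tilde w - w$ term by term using the product rule, taking care that each derivative of an oscillating factor $\Gamma,\bar\Gamma,\dbar\Gamma$ at frequency $\lambda$ produces an extra factor of $\lambda$ that cancels one power of $1/\lambda$. The purpose of the first summand $-\frac{a}{\lambda}\Gamma\,\nabla v^k$ is precisely to produce, after symmetrization, the term $-\sym[\nabla v^k\otimes X]$ which cancels the cross term above, leaving behind only the bulk piece $-\frac{a\Gamma}{\lambda}\nabla^2 v^k$ that appears on the RHS of (\ref{step_err2}). The second summand $\frac{a}{\lambda^2}\bar\Gamma\,\nabla a$ contributes the $\frac{a\bar\Gamma}{\lambda^2}\nabla^2 a$ term of (\ref{step_err2}) together with oscillatory coefficients in front of $\nabla a\otimes\nabla a$ and $\sym[\nabla a\otimes e_i]$, and the third summand $\frac{a^2}{\lambda}\dbar\Gamma\,e_i$ produces oscillatory coefficients in front of $e_i\otimes e_i$ and $\sym[\nabla a\otimes e_i]$.

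Collecting like terms, the identity (\ref{step_err2}) reduces to the three trigonometric relations that are essentially built into the definitions of $\bar\Gamma,\dbar\Gamma,\tbar\Gamma$:
\[
\tfrac{1}{2}(\Gamma')^2 + \dbar\Gamma' = 1,\qquad \tfrac{1}{2}\Gamma^2 + \bar\Gamma = \tbar\Gamma,\qquad \Gamma\Gamma' + \bar\Gamma' + 2\dbar\Gamma = 0.
\]
The first supplies the coefficient $a^2$ in front of $e_i\otimes e_i$ required to absorb the prescribed rank-one defect sitting on the LHS; the second produces the residual coefficient $\tbar\Gamma$ in front of $\nabla a\otimes\nabla a$ on the RHS; the third eliminates the unwanted $\sym[\nabla a\otimes e_i]$ contribution entirely. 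Each of these identities is an elementary consequence of the double-angle formulas for $\sin$ and $\cos$. The only real obstacle is bookkeeping: one must carefully track indices, signs, and the various symmetrizations in order to verify that the cross terms cancel and that the three displayed identities above are all that remain to be checked.
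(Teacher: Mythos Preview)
Your proposal is correct: the three trigonometric identities you isolate are exactly what is needed, and your bookkeeping (the cancellation of $\sym[\nabla v^k\otimes X]$ against the first summand of $\tilde w-w$, and the sorting of the remaining terms by $e_i\otimes e_i$, $\nabla a\otimes\nabla a$, $\sym[\nabla a\otimes e_i]$) is accurate. Note that the paper itself gives no proof of this lemma---it simply recalls the statement from \cite[Lemma~2.1]{lew_conv}---so your direct verification is precisely the argument one would supply.
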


\medskip

\noindent Finally, the following result is an explicit version of the "integration by parts" argument of \cite{CHI2}, which provides a new ingredient of the proofs with respect to our prior observations in \cite{lew_conv, lew_improved, lew_improved2, in_lew}. It yields a new defect's decomposition, complementary to that in Lemma \ref{lem_diagonal}, through removing a further symmetric gradient from the given oscillatory component of a defect at hand and reducing it to a defect of higher order in the frequency, plus another term that agrees in the frequency yet has a lower dimensionality rank. Namely, we have:

\begin{lemma}\label{lem_IBP}
Given  $H\in\mathcal{C}^{k+1}(\R^2,\R^{2\times 2}_\sym)$, $\lambda>0$,
and $\Gamma_0\in\mathcal{C}(\R,\R)$, we have the decomposition:
\begin{equation}\label{ibp1}
\begin{split}
\frac{\Gamma_0(\lambda x_1)}{\lambda}H = & \; (-1)^{k+1}\frac{\Gamma_{k+1}(\lambda
  x_1)}{\lambda^{k+2}} \sym\nabla L_k  \\ & + \sym\nabla
\Big(\sum_{i=0}^k(-1)^i\frac{\Gamma_{i+1}(\lambda x_1)}{\lambda^{i+2}}L_i\Big) 
+ \Big( \sum_{i=0}^{k}(-1)^i\frac{\Gamma_i(\lambda
  x_1)}{\lambda^{i+1}} P_i\Big) e_2\otimes e_2
\end{split}
\end{equation}
where the functions $\Gamma_i\in \mathcal{C}^{i}(\R,\R)$ satisfy the recursive definition:
\begin{equation}\label{recu_ibp}
\Gamma_{i+1}' = \Gamma_{i}  \quad\mbox{ for all } \; i =0\ldots k,
\end{equation}
while $L_i\in\mathcal{C}^{k+1-i}(\R^2,\R^2)$ and
$P_i\in\mathcal{C}^{k+1-i}(\R^2,\R)$ are given in:
\begin{equation}\label{coef_ibp1}
\begin{split}
& \;\; \, L_0 = (H_{11}, 2H_{12}), \qquad P_0 = H_{22},\\
& \left.\begin{array}{l} L_i= (\partial_1^{(i)}H_{11}, 2\partial_1^{(i)}H_{12} +
i\partial_1^{(i-1)}\partial_2H_{11}), \vspace{2mm} 
\\  P_i = 2\partial_1^{(i-1)}\partial_2H_{12} +
(i-1)\partial_1^{(i-2)}\partial_2^{(2)}H_{11} \end{array}\right\}
\quad \mbox{ for all }\; i=1\ldots k.
\end{split}
\end{equation}
In particular, there holds:
\begin{equation*}
\begin{split}
\sym\nabla L_k = & \; \partial_1^{(k+1)}H_{11} e_1\otimes e_1 
+ \big(2\partial_1^{(k+1)}H_{12}+(k+1)\partial_1^{(k)}\partial_2H_{11}
\big) \sym (e_1\otimes e_2) \\
& + \big(2\partial_1^{(k)}\partial_2H_{12} +
k\partial_1^{(k-1)}\partial_2^{(2)}H_{11}  \big) e_2\otimes e_2.
\end{split}
\end{equation*}
\end{lemma}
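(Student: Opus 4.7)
The strategy is to set up a single integration-by-parts identity and then iterate it. For a symmetric matrix field $M\in\mathcal{C}^1(\R^2,\R^{2\times 2}_\sym)$ and any scalar $\Phi\in\mathcal{C}^1(\R)$ viewed as a function of $x_1$ alone, define $L(M):=(M_{11},\, 2M_{12})\in\R^2$. The product rule gives $\sym\nabla(\Phi\, L(M)) = \Phi\,\sym\nabla L(M) + \Phi'\,\sym(L(M)\otimes e_1)$, and a direct entrywise computation of the last term yields $\sym(L(M)\otimes e_1) = M - M_{22}\, e_2\otimes e_2$. Rearranging, this produces the fundamental identity
\begin{equation*}
\Phi'(x_1)\, M = \sym\nabla\bigl(\Phi\, L(M)\bigr) - \Phi\,\sym\nabla L(M) + \Phi'(x_1)\, M_{22}\, e_2\otimes e_2,
\end{equation*}
which captures the trade whereby an $x_1$-oscillating prefactor can be replaced by its antiderivative, at the cost of a symmetric-gradient correction and an $e_2\otimes e_2$ residual, while the remaining symmetric term $-\Phi\,\sym\nabla L(M)$ inherits one more derivative of $M$.

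To iterate, I would set $M_0:=H$ and $M_{i+1}:=\sym\nabla L(M_i)$, and apply the fundamental identity with $\Phi(x_1)=\Gamma_{i+1}(\lambda x_1)/\lambda^{i+2}$; the relation $\Gamma_{i+1}'=\Gamma_i$ gives $\Phi'(x_1)=\Gamma_i(\lambda x_1)/\lambda^{i+1}$, so that at stage $i$
\begin{equation*}
\frac{\Gamma_i(\lambda x_1)}{\lambda^{i+1}}\, M_i = \sym\nabla\!\Bigl(\frac{\Gamma_{i+1}(\lambda x_1)}{\lambda^{i+2}}\, L(M_i)\Bigr) - \frac{\Gamma_{i+1}(\lambda x_1)}{\lambda^{i+2}}\, M_{i+1} + \frac{\Gamma_i(\lambda x_1)}{\lambda^{i+1}}\, (M_i)_{22}\, e_2\otimes e_2.
\end{equation*}
Multiplying the $i$-th equation by $(-1)^i$ and summing over $i=0,\dots,k$ produces a telescoping effect: after the re-indexing $j=i+1$, the $M_{i+1}$ terms on the right cancel the interior $M_j$ terms on the left for $1\le j\le k$. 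What survives is $\Gamma_0(\lambda x_1)/\lambda\cdot H$ on the left, and, on the right, the combined symmetric-gradient coefficient $\sym\nabla\bigl(\sum_{i=0}^k(-1)^i\Gamma_{i+1}/\lambda^{i+2}\,L(M_i)\bigr)$, the residual $(-1)^{k+1}\Gamma_{k+1}/\lambda^{k+2}\cdot M_{k+1}$, and the accumulated $e_2\otimes e_2$ sum. This is precisely the decomposition~(\ref{ibp1}) once one identifies $L(M_i)=L_i$, $(M_i)_{22}=P_i$, and $M_{k+1}=\sym\nabla L_k$.

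These identifications I would verify by induction on $i$. The base case $i=0$ holds by definition. For the inductive step, assuming $L_i$ has the form given in~(\ref{coef_ibp1}), the componentwise identity $\sym\nabla L = \partial_1 L^1\, e_1\otimes e_1 + \tfrac{1}{2}(\partial_2 L^1+\partial_1 L^2)(e_1\otimes e_2+e_2\otimes e_1) + \partial_2 L^2\, e_2\otimes e_2$ yields $(M_{i+1})_{11}=\partial_1^{(i+1)}H_{11}$, $2(M_{i+1})_{12}=2\partial_1^{(i+1)}H_{12}+(i+1)\partial_1^{(i)}\partial_2 H_{11}$, and $(M_{i+1})_{22}=2\partial_1^{(i)}\partial_2 H_{12}+i\,\partial_1^{(i-1)}\partial_2^{(2)}H_{11}$; these match $L_{i+1}$ and $P_{i+1}$. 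The final explicit formula for $\sym\nabla L_k$ stated in the lemma is simply this same expansion evaluated at $i=k$. The main bookkeeping obstacle is the appearance of the combinatorial coefficients $i+1$ and $i$ on the mixed derivatives of $H_{11}$, which arise naturally because the coefficient $i\,\partial_1^{(i-1)}\partial_2 H_{11}$ already present in $L_i^2$ interacts with the new $\partial_2 L_i^1$ term to produce one extra copy of $\partial_1^{(i)}\partial_2 H_{11}$ at each step.
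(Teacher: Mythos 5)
Your proposal is correct and is essentially the paper's own argument: your ``fundamental identity'' combines the paper's integration-by-parts relation (\ref{daa}) with its structural identity $\sym\nabla L_i=\sym(L_{i+1}\otimes e_1)+P_{i+1}\,e_2\otimes e_2$ (equation (\ref{naua})), and your telescoping sum over $i=0,\dots,k$ is just the unrolled form of the paper's induction on $k$. The verification of the coefficients of $L_{i+1}$ and $P_{i+1}$ via the entrywise expansion of $\sym\nabla L_i$ matches the paper's computation exactly.
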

\begin{proof}
{\bf 1.} We start by checking that for all $i=0\ldots k-1$ there holds:
\begin{equation}\label{naua}
\sym\nabla L_i= \sym  (L_{i+1}\otimes e_1) + P_{i+1}\, e_2\otimes e_2.
\end{equation}
At $i=0$ we have $L_{i+1}=L_1=(\partial_1H_{11},
2\partial_1H_{12}+\partial_2H_{11})$ and $P_{i+1}=P_1=2\partial_2H_{12}$, hence:
\begin{equation*}
\begin{split}
\sym\nabla L_0 & =  \partial_1 H_{11} \,e_1\otimes e_1 +
\big(2\partial_1H_{12} + \partial_2 H_{11}\big)\,\sym (e_1\otimes e_2)
+ 2\partial_2H_{12} \,e_2\otimes e_2\\
& =\sym (L_1\otimes e_1) + P_{1}\, e_2\otimes e_2.
\end{split}
\end{equation*}
On the other hand, for $i\geq 1$ it follows that:
\begin{equation*}
\begin{split}
\sym\nabla L_i & =  \partial_1^{(i+1)} H_{11} \,e_1\otimes e_1 +
\big(2\partial_1^{(i+1)} H_{12} + (i+1)\partial_1^{(i)}\partial_2H_{11}
\big)\,\sym (e_1\otimes e_2) \\ & \quad 
+ \big(2\partial_1^{(i)}\partial_2H_{12} + i\partial_{1}^{(i-1)}\partial_2^{(2)}H_{11}\big)\,e_2\otimes e_2\\
& =\sym (L_{i+1}\otimes e_1) + P_{i+1}\, e_2\otimes e_2.
\end{split}
\end{equation*}
This validates (\ref{naua}). Observe also that (\ref{recu_ibp}) yields:
\begin{equation}\label{daa}
\frac{\Gamma_i(\lambda x_1)}{\lambda^{i+1}}\sym(L_i\otimes e_1) = 
-  \frac{\Gamma_{i+1}(\lambda  x_1)}{\lambda^{i+2}} \sym\nabla L_i 
 + \sym\nabla \Big(\frac{\Gamma_{i+1}(\lambda x_1)}{\lambda^{i+2}}L_i\Big). 
\end{equation}

\smallskip

{\bf 2.} The proof of (\ref{ibp1}) is carried out by induction on $k$. 
At $k=0$, we use (\ref{daa}) together with 
the obvious decomposition $H=\sym(L_0\otimes e_1) + P_0\, e_2\otimes e_2$, to get:
\begin{equation*}
\begin{split}
\frac{\Gamma_0(\lambda x_1)}{\lambda}H 
& = \frac{\Gamma_0(\lambda x_1)}{\lambda} \sym(L_0\otimes e_1) + 
\frac{\Gamma_0(\lambda x_1)}{\lambda} P_0 \, e_2\otimes e_2
\\ & = - \frac{\Gamma_{1}(\lambda x_1)}{\lambda^{2}} \sym\nabla L_0  
+ \sym\nabla \Big(\frac{\Gamma_{1}(\lambda x_1)}{\lambda^{2}}L_0\Big) 
+ \frac{\Gamma_0(\lambda x_1)}{\lambda^{1}} P_0 \, e_2\otimes e_2
\end{split}
\end{equation*}
Assume that (\ref{ibp1}) holds at some $k\geq 0$. The first term in
its right hand side can be rewritten in virtue of (\ref{naua}) and
(\ref{daa}) as: 
\begin{equation*} 
\begin{split}
 & (-1)^{k+1} \frac{\Gamma_{k+1}(\lambda  x_1)}{\lambda^{k+2}} \sym\nabla L_k 
\\ & \qquad =   (-1)^{k+1}\frac{\Gamma_{k+1}(\lambda  x_1)}{\lambda^{k+2}} \sym (L_{k+1}\otimes e_1) +
(-1)^{k+1}\frac{\Gamma_{k+1}(\lambda  x_1)}{\lambda^{k+2}}  P_{k+1}\, e_2\otimes e_2
\\ & \qquad = (-1)^{k+2}\frac{\Gamma_{k+2}(\lambda  x_1)}{\lambda^{k+3}} \sym\nabla L_{k+1}
 +  (-1)^{k+1} \sym\nabla \Big(\frac{\Gamma_{k+2}(\lambda x_1)}{\lambda^{k+3}}L_{k+1}\Big)
\\ & \qquad \quad + (-1)^{k+1}\frac{\Gamma_{k+1}(\lambda
  x_1)}{\lambda^{k+2}}  P_{k+1}\, e_2\otimes e_2. 
\end{split}
\end{equation*}
Summing with the other two terms of (\ref{ibp1}), the identity follows
at $k+1$ as claimed.
\end{proof}

\medskip

\noindent A symmetric decomposition as in Lemma \ref{lem_IBP} holds with respect to $x_2$ rather than $x_1$:

\begin{corollary}\label{cor_IBP}
Let $H$, $\lambda$, $\Gamma_0$ be as in Lemma \ref{lem_IBP} and
$\{\Gamma_i\in \mathcal{C}^{i}(\R,\R)\}_{i=1}^{k+1}$ as in (\ref{recu_ibp}).
Then:
\begin{equation}\label{ibp2}
\begin{split}
\frac{\Gamma_0(\lambda x_2)}{\lambda}H = & \; (-1)^{k+1}\frac{\Gamma_{k+1}(\lambda
  x_2)}{\lambda^{k+2}} \sym\nabla \tilde L_k  \\ & + \sym\nabla
\Big(\sum_{i=0}^k(-1)^i\frac{\Gamma_{i+1}(\lambda
  x_2)}{\lambda^{i+2}}\tilde L_i\Big) 
+ \Big( \sum_{i=0}^{k}(-1)^i\frac{\Gamma_i(\lambda
  x_2)}{\lambda^{i+1}} \tilde P_i\Big) e_1\otimes e_1,
\end{split}
\end{equation}
with $\tilde L_i\in\mathcal{C}^{k+1-i}(\R^2,\R^2)$,
$\tilde P_i\in\mathcal{C}^{k+1-i}(\R^2,\R)$ given in:
\begin{equation}\label{coef_ibp2}
\begin{split}
& \;\; \; \tilde L_0 = (2H_{12}, H_{22}), \qquad \tilde P_0 = H_{11},\\
& \left.\begin{array}{l} \tilde L_i= (2\partial_2^{(i)}H_{12} +
i\partial_1\partial_2^{(i-1)}H_{22}, \partial_2^{(i)}H_{22}), \vspace{2mm} 
\\  \tilde P_i = 2\partial_1\partial_2^{(i-1)}H_{12} +
(i-1)\partial_1^{(2)}\partial_2^{(i-2)}H_{22} \end{array}\right\}
\quad \mbox{ for all }\; i=1\ldots k.
\end{split}
\end{equation}
In particular, there holds:
\begin{equation*}
\begin{split}
\sym\nabla \tilde L_k = & \; \big(2\partial_1\partial_2^{(k)} H_{12} +
k\partial_1^{(2)}\partial_2^{(k-1)}H_{22} \big) e_1\otimes e_1 \\
& + \big(2\partial_2^{(k+1)}H_{12}+(k+1)\partial_1\partial_2^{(k)}H_{22}
\big) \sym (e_1\otimes e_2) + \partial_2^{(k+1)}H_{22} e_2\otimes e_2.
\end{split}
\end{equation*}
\end{corollary}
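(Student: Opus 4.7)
My plan is to deduce Corollary \ref{cor_IBP} directly from Lemma \ref{lem_IBP} via a coordinate swap. Let $\sigma:\R^2\to\R^2$ denote the involution $\sigma(x_1,x_2)=(x_2,x_1)$ and let $S$ be the $2\times 2$ permutation matrix with $Se_1=e_2$ and $Se_2=e_1$, so that $S=S^T=S^{-1}$. I would introduce the auxiliary field $\tilde H(y):=SH(\sigma(y))S$, which is again in $\mathcal{C}^{k+1}(\R^2,\R^{2\times 2}_\sym)$, with entries $\tilde H_{11}(y)=H_{22}(\sigma(y))$, $\tilde H_{12}(y)=H_{12}(\sigma(y))$, and $\tilde H_{22}(y)=H_{11}(\sigma(y))$.

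The next step is to apply Lemma \ref{lem_IBP} to the triple $(\tilde H,\lambda,\Gamma_0)$, producing a decomposition of $\frac{\Gamma_0(\lambda y_1)}{\lambda}\tilde H(y)$ in the form (\ref{ibp1}), with associated fields $L_i^{\tilde H},\,P_i^{\tilde H}$ given by (\ref{coef_ibp1}) applied to $\tilde H$. I would then evaluate the identity at $y=\sigma(x)$ (so $y_1=x_2$) and conjugate both sides by $S$. The left-hand side becomes $\frac{\Gamma_0(\lambda x_2)}{\lambda}H(x)$ because $S\tilde H(\sigma(x))S=H(x)$; the summand $P_i^{\tilde H}\,e_2\otimes e_2$ is mapped to $\tilde P_i(x)\,e_1\otimes e_1$ via $S(e_2\otimes e_2)S=(Se_2)\otimes(Se_2)=e_1\otimes e_1$, where $\tilde P_i(x):=P_i^{\tilde H}(\sigma(x))$; and each symmetric-gradient block is transformed using the chain-rule identity $S(\sym\nabla f)(\sigma(x))S=\sym\nabla(Sf\circ\sigma)(x)$, which follows from $\nabla(f\circ\sigma)=(\nabla f)\circ\sigma\cdot S$ combined with the symmetry of $S$. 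This conjugates $\sym\nabla L_i^{\tilde H}$ into $\sym\nabla\tilde L_i$ with $\tilde L_i(x):=SL_i^{\tilde H}(\sigma(x))$.

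What remains is the routine verification that the resulting $\tilde L_i$ and $\tilde P_i$ match the formulas (\ref{coef_ibp2}) of the corollary. This uses only the commutation relation $\partial_1^{(m)}\partial_2^{(n)}\tilde H_{ab}(y)=\partial_2^{(m)}\partial_1^{(n)}H_{\sigma(a)\sigma(b)}(\sigma(y))$ together with the fact that left-multiplication by $S$ interchanges the two components of a vector. At $i=0$ this gives $\tilde L_0=(2H_{12},H_{22})$ and $\tilde P_0=H_{11}$ directly, and the higher $i$ cases proceed in the same mechanical way. The explicit formula for $\sym\nabla\tilde L_k$ stated at the end of the corollary then either drops out of (\ref{coef_ibp2}) by direct differentiation, or equivalently is the swap-image of the corresponding formula for $\sym\nabla L_k$ in Lemma \ref{lem_IBP}. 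The main (and essentially only) obstacle I anticipate is careful index bookkeeping during this final matching step; no new analytic ingredient beyond Lemma \ref{lem_IBP} is required.
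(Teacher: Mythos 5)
Your proof is correct and is exactly the symmetry argument the paper has in mind: the corollary is stated without proof as the ``symmetric decomposition with respect to $x_2$ rather than $x_1$,'' and your conjugation by the permutation matrix $S$ composed with the coordinate swap $\sigma$ (using $S(\sym\nabla f)(\sigma(x))S=\sym\nabla(Sf\circ\sigma)(x)$ and $S(e_2\otimes e_2)S=e_1\otimes e_1$) makes that symmetry precise, and the resulting $\tilde L_i$, $\tilde P_i$ do match (\ref{coef_ibp2}). No gap.
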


\medskip

\noindent We remark that if $\Gamma_0$ has the general form $\alpha\sin(\beta t)$ or
$\alpha \cos(\beta t)$, as is the case for the zero-mean periodic profiles $\Gamma$, $\bar\Gamma$
or $\tbar\Gamma -1$ in Lemma \ref{lem_step2}, then the primitives
$\Gamma_i$ are of the same form $\frac{\alpha}{\beta^i}\sin(\beta t)$
or $\frac{\alpha}{\beta^i}\cos(\beta t)$. All their
derivatives are then bounded, which allows the uniformity of estimates
in Proposition \ref{prop2}, and is the reason why we work with $\tbar{\Gamma} - 1$ 
instead of $\tbar \Gamma$.

\section{The K\"all\'en iteration}\label{sec_kallen}

In this section, we carry out a simple version of K\"all\'en's iteration, with the purpose of canceling the non-oscillatory  portion of the last defect term in the right hand side of (\ref{step_err2}), corresponding to $\frac{1}{\lambda^2}\nabla a\otimes\nabla a$. The remaining portion $\frac{1}{\lambda^2}(\tbar\Gamma(\lambda x_1) -1)\nabla a\otimes\nabla a$ will be canceled (in the leading order terms) via an application of Lemma \ref{lem_IBP}. The matrix field $H$ in the statement below should be thought of as the scaled defect $\mathcal{D}$ in the proof of Theorem \ref{thm_stage}.

\begin{proposition}\label{prop1}
Let $\omega\subset\R^2$ be open, bounded, smooth and let $N\geq 1$ and
$\gamma\in (0,1)$. Then, there exists $l_0\in (0,1)$
depending only on $\omega$ and $\sigma_0\geq 2$ depending on $\omega,
\gamma, N$ such that the following holds. Given the constants
$l>0$, $\mu, \bar\lambda>1$ and an integer $M$, satisfying:
\begin{equation}\label{ass_de}
l\leq l_0,\qquad \bar\lambda^{1-\gamma}\geq \mu\sigma_0,\qquad M\geq 0,
\end{equation}
and given the field $H\in\mathcal{C}^\infty(\bar\omega+\bar
B_{l}(0),\R^{2\times 2}_\sym)$ with the property:
\begin{equation}\label{ass_H0}
\|\nabla^{(m)}H\|_0\leq \mu^m \qquad \mbox{for all } \; m=0\ldots M+2N,
\end{equation}
there exist $a\in \mathcal{C}^\infty(\bar\omega + \bar B_l(0),\R)$
and $\Psi\in \mathcal{C}^\infty(\bar\omega + \bar B_l(0),\R^2)$
such that, denoting:
$$\displaystyle{\mathcal{F} = a^2\Id_2 + \sym\nabla\Psi - H +
\frac{1}{\bar\lambda^2}\nabla a\otimes\nabla a,}$$
there hold the estimates:
\begin{equation}\label{Ebounds}
\begin{split}
&  \; \; \; \frac{\bar C}{2}\mu^\gamma\leq a^2\leq \frac{3\bar C}{2}\mu^\gamma
\hspace{5.5mm}\mbox{ and }\quad 
\frac{\bar C^{1/2}}{2}\mu^{\gamma/2}\leq a\leq \frac{3\bar C^{1/2}}{2}\mu^{\gamma/2}, \\
& \; \; \; \|\Psi\|_1\leq C\mu^\gamma \hspace{17.8mm} \mbox{and }\quad
  \|\nabla^2\Psi\|_0\leq C\mu^\gamma\mu,  \vspace{3mm} \\
& \left. \begin{array}{l} \|\nabla^{(m)} a^2\|_0\leq C\mu^{\gamma}\mu^m\quad 
\mbox{and } \quad \|\nabla^{(m)} a\|_0\leq C\mu^{\gamma/2}\mu^m \vspace{2mm}\\ 
\displaystyle{\|\nabla^{(m)}\mathcal{F}\|_0 \leq 
C\frac{\mu^m}{(\bar\lambda/\mu)^{N}}}
\end{array}\right\} \quad\mbox{for all } \; m=0\ldots M. 
\end{split}
\end{equation}
The constant $\bar C$ depends only on $\omega,
\gamma$, while $C$ depends on $\omega,\gamma, N, M$.
\end{proposition}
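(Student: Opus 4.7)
The strategy is an iterative application of Lemma \ref{lem_diagonal}, in the spirit of K\"all\'en's construction, designed to cancel the quadratic self-coupling $\frac{1}{\bar\lambda^2}\nabla a\otimes\nabla a$ perturbatively. Fix a constant $c := \bar C \mu^\gamma$ with $\bar C$ depending only on $\omega, \gamma$, to be chosen sufficiently large. Initialize by setting
\[
a_0^2 := c + \bar a(H), \qquad \Psi_0(x) := -\bar\Psi(H)(x) - c\, x,
\]
where $\bar a, \bar\Psi$ are the operators supplied by Lemma \ref{lem_diagonal} on the smooth neighbourhood $\bar\omega + \bar B_l(0)$. Since $\sym\nabla(-c\,x) = -c\Id_2$ and $H + \sym\nabla\bar\Psi(H) = \bar a(H)\Id_2$, one checks that $a_0^2\Id_2 + \sym\nabla\Psi_0 = H$, so $\mathcal{F}_0 = \bar\lambda^{-2}\nabla a_0\otimes\nabla a_0$. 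The estimate $\|\bar a(H)\|_0 \leq C\|H\|_0 \leq C$ from (\ref{diag_bounds}) combined with (\ref{ass_H0}) ensures the positivity window $c/2 \leq a_0^2 \leq 3c/2$ once $\bar C$ is taken sufficiently large.

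Given the triple $(a_n,\Psi_n,\mathcal{F}_n)$, define its successor by the corrections
\[
a_{n+1}^2 := a_n^2 - \bar a(\mathcal{F}_n), \qquad \Psi_{n+1} := \Psi_n + \bar\Psi(\mathcal{F}_n).
\]
Since $\bar a(\mathcal{F}_n)\Id_2 - \sym\nabla\bar\Psi(\mathcal{F}_n) = \mathcal{F}_n$ by Lemma \ref{lem_diagonal}(i), a direct substitution yields the telescoping identity
\[
\mathcal{F}_{n+1} = \bar\lambda^{-2}\bigl(\nabla a_{n+1}\otimes\nabla a_{n+1} - \nabla a_n\otimes\nabla a_n\bigr),
\]
so the ``linear'' part of each residual is eliminated exactly, and only the quadratic self-interaction is carried forward from one iteration to the next.

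The bulk of the proof consists in propagating by induction on $n$ the estimates $\|\nabla^{(m)} a_n\|_0 \leq C \mu^{\gamma/2}\mu^m$ and $\|\nabla^{(m)}\mathcal{F}_n\|_0 \leq C \mu^m r^{n+1}$, where $r := \mu^{2-\gamma}/\bar\lambda^2$. The bound on $a_n$ follows from bounds on $a_n^2$ via the smoothness of $t\mapsto\sqrt t$ on $[c/2, 3c/2]$; the positivity window persists throughout because $\|\bar a(\mathcal{F}_n)\|_0$ is geometrically small. The decay of $\mathcal{F}_n$ is derived from the telescoping identity by applying the chain rule to $\nabla(a_{n+1}-a_n) = -\nabla\bar a(\mathcal{F}_n)/(a_{n+1}+a_n) + \mathrm{l.o.t.}$, the bilinearity of $\otimes$, and the smoothing estimates (\ref{diag_bounds}). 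Since assumption (\ref{ass_de}) gives $r \leq (\mu/\bar\lambda)\sigma_0^{-1}$, iterating $N$ times yields $\|\nabla^{(m)}\mathcal{F}_N\|_0 \leq C\mu^m(\mu/\bar\lambda)^N$, which is the desired bound upon setting $a := a_N$, $\Psi := \Psi_N$, $\mathcal{F} := \mathcal{F}_N$.

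The principal technical obstacle is the careful bookkeeping of derivative orders. Each iterative step consumes up to two derivatives of $H$: one comes from the chain-rule factor $\nabla\bar a(\mathcal{F}_n)$ entering $\nabla a_{n+1}$, and a second from differentiating the tensor product $\nabla a\otimes\nabla a$ in the next residual. This accounts for the requirement of controlling $\nabla^{(m)} H$ up to order $M + 2N$ in (\ref{ass_H0}). The positivity window $c/2 \leq a_n^2 \leq 3c/2$ is maintained because the correction $\bar a(\mathcal{F}_n)$ decreases geometrically in $n$, and the initial choice of $\bar C$ --- large relative to the universal constants of Lemma \ref{lem_diagonal} --- controls the cumulative drift.
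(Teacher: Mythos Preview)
Your approach is essentially the same K\"all\'en iteration as the paper's: shift by a large constant $c=\bar C\mu^\gamma$ to secure positivity, then repeatedly feed the residual back through the operators $\bar a,\bar\Psi$ of Lemma~\ref{lem_diagonal}. The indexing and sign conventions differ cosmetically (your $a_0$ is the paper's $a_1$; you use $-\bar\Psi$ where the paper uses $+\bar\Psi$), but the telescoping identity $\mathcal F_{n+1}=\bar\lambda^{-2}(\nabla a_{n+1}\otimes\nabla a_{n+1}-\nabla a_n\otimes\nabla a_n)$ is exactly the paper's $\mathcal E_{i}-\mathcal E_{i-1}$.

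Two quantitative slips to flag. First, Lemma~\ref{lem_diagonal} gives no $\mathcal C^0\to\mathcal C^0$ bound for $\bar a$; the estimate (\ref{diag_bounds}) is H\"older-to-H\"older. So the line ``$\|\bar a(H)\|_0\le C\|H\|_0\le C$'' is not justified as written; one must interpolate: $\|\bar a(H)\|_0\le C\|H\|_{0,\gamma}\le C(\|H\|_0+\|H\|_0^{1-\gamma}\|\nabla H\|_0^\gamma)\le C\mu^\gamma$, which is exactly why the shift $c$ must be of order $\mu^\gamma$. Second, and for the same reason, every pass through $\bar a$ costs a factor $\mu^\gamma$ in the H\"older norm, so the correct contraction factor is $\mu^{\gamma}(\mu/\bar\lambda)^2=\mu^{2+\gamma}/\bar\lambda^2$, not your $r=\mu^{2-\gamma}/\bar\lambda^2$; with your $r$ the base case $\|\mathcal F_0\|_0\le Cr$ already fails for large $\mu$. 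Neither slip is fatal: under (\ref{ass_de}) the corrected factor still satisfies $\mu^\gamma(\mu/\bar\lambda)^2\le\bar\lambda^\gamma(\mu/\bar\lambda)\le\sigma_0^{-1}$, and after $N$ iterations one recovers the claimed $(\mu/\bar\lambda)^N$ decay exactly as in the paper.
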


\begin{proof}
{\bf 1. (Iteration set-up)}  With the help of Lemma
\ref{lem_diagonal}, we will inductively define 
scalar fields $\{a_i\in \mathcal{C}^\infty(\bar\omega + \bar B_l(0),\R)\}_{i=1}^N$ 
and vector fields $\{\Psi_i\in \mathcal{C}^\infty(\bar\omega + \bar 
B_l(0),\R^2)\big\}_{i=1}^N$ such that:
\begin{equation}\label{def_aP}
\begin{split}
& a_i^2\Id_2 + \sym\nabla \Psi_i = H - \mathcal{E}_{i-1}\quad
\; \, \mbox{ for all } \; i=1\ldots N,\\
&\mbox{where } \quad \mathcal{E}_i = \frac{1}{\bar\lambda^2}\nabla a_{i}\otimes\nabla a_{i}
\qquad \mbox{ for all } \; i=0 \ldots N,
\end{split}
\end{equation}
where we have set $ a_0=0$. Declaring:
\begin{equation*}
\begin{split}
& a=a_N,\quad \Psi=\Psi_N, \qquad 
\mbox{so that }\; \mathcal{F} = \mathcal{E}_N - \mathcal{E}_{N-1}, 
\end{split}
\end{equation*}
the bounds (\ref{Ebounds}) will be implied by the following estimate,
that we prove below for $i=1\ldots N$:
\begin{align*}
&  \frac{\bar C}{2}\mu^\gamma\leq a_i^2\leq \frac{3\bar
  C}{2}\mu^\gamma \hspace{3.5mm} \quad\mbox{ and }\quad
\frac{\bar C^{1/2}}{2}\mu^{\gamma/2}\leq a_i\leq \frac{3\bar C^{1/2}}{2}\mu^{\gamma/2}, 
\tag*{(\theequation)$_1$}\refstepcounter{equation} \label{Ebound12}\\
& \|\nabla^{(m)} a_i^2\|_0\leq C\mu^{\gamma}\mu^m \quad \mbox{ and }
\quad \|\nabla^{(m)} a_i\|_0\leq C\mu^{\gamma/2}\mu^m \qquad
\, \mbox{ for all } \; m=1\ldots M,
\vspace{5mm} \tag*{(\theequation)$_2$}\label{Ebound22} \\ 
& \|\Psi_i\|_1\leq C\mu^\gamma \quad \mbox{and}\quad
  \|\nabla^2\Psi_i\|_0\leq C\mu^\gamma\mu,  \vspace{3mm} 
\tag*{(\theequation)$_3$} \label{Ebound42}\\
& \|\nabla^{(m)}\big( \mathcal{E}_i - \mathcal{E}_{i-1}\big)\|_0 
\leq C \mu^{\gamma i} \frac{\mu^{m}}{(\bar\lambda/\mu)^{2i}}
\quad \qquad \; \;\, \mbox{ for all }\; m=0\ldots M.
\tag*{(\theequation)$_4$} \label{Ebound52}
\end{align*}
The constant $\bar C$ in \ref{Ebound12} will be shown to depend only on $\omega,
\gamma$. Other constants $C$ depend: in \ref{Ebound22}
on $\omega,\gamma, n, M$ and only $M+2(i-1)+1$ derivatives of $H$;
in \ref{Ebound42} on $\omega,\gamma, i$ and only $2+2(i-1)$derivatives
of $H$; in \ref{Ebound52} on $\omega, \gamma, N, M$ necessitating
the full condition (\ref{ass_H0}). The last bound in (\ref{Ebounds})
follows from \ref{Ebound52} and the second condition in (\ref{ass_de}), because:
$$\displaystyle{\|\nabla^{(m)}\mathcal{F}\|_0 =
  \|\nabla^{(m)}\big(\mathcal{E}_N-\mathcal{E}_{N-1}\big)\|_0
\leq C\mu^{\gamma N}\frac{\mu^m}{(\bar\lambda/\mu)^{2N}}}
\leq C \Big(\frac{\bar\lambda^\gamma}{\bar\lambda/\mu}\Big)^N
\frac{\mu^m}{(\bar\lambda/\mu)^N} \leq C \frac{\mu^m}{(\bar\lambda/\mu)^N}.$$ 

\medskip

We start by observing that the second bound in \ref{Ebound12} is 
implied by the first one, because:
$$\big|\frac{a_i(x)}{\bar C^{1/2}\mu^{\gamma/2}}-1\big|\leq
\big|\frac{a_i(x)^2}{\bar C\mu^{\gamma}}-1\big|\leq \frac{1}{2}.$$
Likewise, the second bound in \ref{Ebound22} follows from the first one in view of 
\ref{Ebound12}, in virtue of the Fa\'a di Bruno formula:
\begin{equation*}
\begin{split}
\|\nabla^{(m)}a_i\|_0 & \leq C\Big\|\sum_{p_1+2p_2+\ldots
  mp_m=m} a_i^{2(1/2-p_1-\ldots -p_m)}\prod_{z=1}^m\big|\nabla^{(z)}a_i^2\big|^{p_z}\Big\|_0
\\ & \leq C\|a_i\|_0 \sum_{p_1+2p_2+\ldots
  mp_m=m}\prod_{z=1}^m\Big(\frac{\|\nabla^{(z)}a_i^2\|_0}{\bar
  C \mu^\gamma}\Big)^{p_z}\leq C\mu^{\gamma/2}\mu^m,
\end{split}
\end{equation*}
necessitating the same number of derivatives bounds in condition (\ref{ass_H0}). 
Applying Fa\'a di Bruno's formula to the inverse rather than the square root, \ref{Ebound22} yields:
\begin{equation}\label{asm2}
\|\nabla^{(m)}\Big(\frac{1}{a_{i+1}+a_{i}}\Big)\|_0  \leq
 \frac{C}{\mu^{\gamma/2}}\sum_{p_1+2p_2+\ldots
  mp_m=m}\prod_{z=1}^m\Big(\frac{\|\nabla^{(z)}(a_{i+1}+a_{i})\|_0}{\bar
  C^{1/2}\mu^{\gamma/2}}\Big)^{p_z}\leq \frac{C}{\mu^{\gamma/2}}\mu^m, 
\end{equation}
with the the same dependence of constants and using the same number of
derivatives of $H$ as in the corresponding bounds on
$a_i$ and $a_{i+1}$.

\smallskip

{\bf 2. (Induction base $i=1$ and definition of $\bar C$)}
Let the linear maps ${\bar a, \bar\Psi}$ be as in 
Lemma \ref{lem_diagonal}, applied with the specified $\gamma$.
From the bound on $\|H\|_1$ in (\ref{ass_H0}), we obtain:
$$\|\bar a(H)\|_0\leq C\|H\|_{0,\gamma}\leq C\big(\|H\|_0 +
\|H\|_0^{1-\gamma} \|\nabla H\|_0^\gamma \big)\leq C\mu^\gamma$$
where $C$ depends on $\omega$, $\gamma$. We declare $\bar C$ to
be four times the final constant above, leading to:
\begin{equation}\label{Ct_st}
\|\bar a(H)\|_0\leq \frac{\bar C}{4}\mu^\gamma.
\end{equation}
This results in the validity of the first bound in \ref{Ebound12}, where we set:
\begin{equation}\label{def_aP_global1}
a_1^2 = \bar C\mu^\gamma + \bar a(H), \qquad \Psi_1=\bar\Psi(H) - \bar C\mu^\gamma id_2,
\end{equation}
while the identity (\ref{def_aP}) holds because $\mathcal{E}_0=0$ and:
\begin{equation*}
\begin{split}
H & = \bar a(H)\Id_2 + \sym \nabla \big(\bar\Psi(H)\big) 
\\ & = (a_1^2\Id_2 - \bar C\mu^\gamma \Id_2 ) + (\sym\nabla \Psi_1 + \bar
C\mu^\gamma \Id_2) = a_1^2\Id_2 + \sym\nabla \Psi_1.
\end{split}
\end{equation*}
Further, using the bound on $\|H\|_{M+1}$ in (\ref{ass_H0}), we obtain for all $m=1\ldots M$:
\begin{equation}\label{a1_st}
\begin{split}
\|\nabla^{(m)}a_1^2\|_0 & \leq C \|H\|_{m,\gamma} \leq C \big(
\|H\|_m +  \|\nabla^{(m)}H\|_0^{1-\gamma}
\|\nabla^{(m+1)}H\|_0^\gamma\big) \\ & \leq C\big(\mu^m +
\mu^{m(1-\gamma)}\mu^{(m+1)\gamma}\big) \leq C \mu^\gamma\mu^m,
\end{split}
\end{equation}
where $C$ depends on $\omega, \gamma, M$. The above implies
the first bound in \ref{Ebound22}. Towards \ref{Ebound52}, we apply
(\ref{a1_st}) and use (\ref{ass_H0}) up to $\|H\|_{M+2}$ in:
\begin{equation*}
\big\|\nabla^{(m)}\mathcal{E}_1\big\|_0 \leq  
C\sum_{p+q=m}\bar\lambda^{-2}\|\nabla^{(p+1)} a_1\|_0\|\nabla^{(q+1)}a_1\|_0 
\leq C\sum_{p+q=m}\bar\lambda^{-2}\mu^\gamma\mu^{p+q+2}
\leq C \mu^\gamma \frac{\mu^m}{(\bar\lambda/\mu)^2},
\end{equation*}
valid for $m\leq M$, with $C$ depending on $\omega, \gamma, M$.
The above is precisely \ref{Ebound52} since $\mathcal{E}_0=0$.

\smallskip

{\bf 3. (Induction step: bounds \ref{Ebound12}, \ref{Ebound22})} Assume that \ref{Ebound12},
\ref{Ebound22}, \ref{Ebound52}  hold up to some $1\leq i\leq N-1$,
necessitating (\ref{ass_H0}) to estimate derivatives of $H$ only up to
${M+2i}$. We will prove the validity of the same bounds
at $i+1$, necessitating (\ref{ass_H0}) to estimate
$\|H\|_{M+2i+1}$. We start by noting that, as a consequence of
\ref{Ebound52}, for all $j=1\ldots i$ and $m=0\ldots M$ we have:
\begin{equation}\label{imtih1}
\|\nabla^{(m)}\big(\mathcal{E}_j -\mathcal{E}_{j-1}\big)\|_{0,\gamma}
\leq C\mu^{\gamma (j+1)}\frac{\mu^m}{(\bar\lambda/\mu)^{2j}}
= C\mu^\gamma \mu^m\Big(\frac{\mu^\gamma}{(\bar\lambda/\mu)^{2}}\Big)^j,
\end{equation}
necessitating the bounds on $\|H\|_{M+2i+1}$ and with $C$ depending on
$\omega,\gamma, i, M$. This yields:
\begin{equation}\label{imtih2}
\|\nabla^{(m)}\mathcal{E}_i\|_{0,\gamma}\leq C\mu^\gamma \mu^m
\sum_{j=1}^i \Big(\frac{\mu^\gamma}{(\bar\lambda/\mu)^{2}}\Big)^j \leq C \mu^\gamma
\mu^m\frac{\mu^\gamma/(\bar\lambda/\mu)^2}
{1-\mu^\gamma/(\bar\lambda/\mu)^2} \leq  C \mu^{2\gamma}
\frac{\mu^m}{(\bar\lambda/\mu)^2}  
\end{equation}
as $\mu^\gamma/(\bar\lambda/\mu)^2\leq
\bar\lambda^\gamma/(\bar\lambda/\mu) \leq 1/\sigma_0\leq 1/2$ from the second
assumption in (\ref{ass_de}). In particular:
$$\|\bar a(\mathcal{E}_i)\|_0\leq C\|\mathcal{E}_i\|_{0,\gamma}\leq C
\frac{\mu^{2\gamma}}{(\bar \lambda/\mu)^2} \leq \frac{\bar C}{4}\mu^\gamma $$
provided that $\sigma_0$ is large enough, in function of $\omega, \gamma, i$.
Recalling (\ref{Ct_st}), the above yields the well definiteness of
$a_{i+1}^2$ together with the first bound in \ref{Ebound12}, upon defining:
\begin{equation}\label{def_aP_global}
a_{i+1}^2 = \bar C\mu^\gamma + \bar a(H -\mathcal{E}_i),\qquad 
\Psi_{i+1} = \bar\Psi(H - \mathcal{E}_i) - \bar C\mu^\gamma id_2.
\end{equation}
The identity (\ref{def_aP}) clearly holds from Lemma \ref{lem_diagonal} (i).
Towards proving \ref{Ebound22}, we apply (\ref{imtih2}) and the bound on
$\|H\|_{m,\gamma}$ in (\ref{a1_st}), to get:
$$\| \nabla^{(m)}a_{i+1}^2\|_0\leq C \|H -\mathcal{E}_i\|_{m,\gamma} 
\leq C\mu^\gamma\mu^m\Big(1+ 
\frac{\mu^\gamma}{(\bar\lambda/\mu)^2}\Big) \leq C\mu^\gamma\mu^m$$
for all $m=1\ldots M$. Above the constant $C$ depends on $\omega,\gamma, i,
M$ and condition (\ref{ass_H0}) has been used 
only up to ${M+2i}+1$ derivatives of $H$.

\smallskip

{\bf 4. (Induction step: the bound \ref{Ebound52})} We
continue the inductive step argument and show \ref{Ebound52} at $i+1$,
necessitating (\ref{ass_H0}) to estimate 
derivatives of $H$ up to ${M+2(i+1)}$. From the definitions (\ref{def_aP_global1}) and
(\ref{def_aP_global}), it follows that: 
$$a_{i+1}^2 - a_i^2 = -\bar a(\mathcal{E}_i - \mathcal{E}_{i-1}).$$
Consequently, recalling (\ref{imtih1}) we get:
\begin{equation*}
\begin{split}
& \|\nabla^{(m)} (a_{i+1}^2 - a_i^2)\|_0\leq C\|\mathcal{E}_i - \mathcal{E}_{i-1}\|_{m,\gamma}
\leq C\mu^\gamma \mu^m \big(\frac{\mu^{\gamma i}}{(\bar\lambda/\mu)^{2i}},
\end{split}
\end{equation*}
which together with (\ref{asm2}) implies:
\begin{equation}\label{imtih3}
\begin{split}
\|\nabla^{(m)} (a_{i+1} - a_i)\|_0 & \leq C \sum_{p+q=m} \|\nabla^{(p)}(a_{i+1}^2 - a_i^2)\|_0
\big\|\nabla^{(q)}\big(\frac{1}{a_{i+1}+a_i}\big)\big\|_0 \\ & \leq 
C\mu^{\gamma/2}\mu^m\frac{\mu^{\gamma i}}{(\bar\lambda/\mu)^{2i}}
\end{split}
\end{equation}
for all $m=0\ldots M$, with $C$ depending on
$\omega, \gamma, i, M$ and where we used bounds in (\ref{ass_H0}) on
the derivatives of $H$ up to $M+2i+1$. Towards proving \ref{Ebound52}, 
we use the identity: 
$$\nabla a_{i+1}\otimes \nabla a_{i+1} - \nabla a_{i}\otimes \nabla a_{i} = \nabla
(a_{i+1}-a_i)\otimes \nabla (a_{i+1}-a_i) + 2\,\sym \big(  
\nabla (a_{i+1}-a_i)\otimes \nabla a_{i}\big)$$ 
and estimate, using (\ref{imtih3}), the already established bounds
\ref{Ebound22}, \ref{Ebound22}:
\begin{equation*}
\begin{split}
& \big\|\nabla^{(m)}\big(\mathcal{E}_{i+1}-\mathcal{E}_i\big) \big\|_0 
\\  & \leq C \sum_{p+q=m} \bar \lambda^{-2}\|\nabla^{(p+1)}(a_{i+1}-a_i)\|_0 
\big(\|\nabla^{(q+1)} a_{i+1}\|_0+ \|\nabla^{(q+1)} a_{i}\|_0 \big)\\
& \leq C\frac{\mu^{m+2}}{\bar\lambda^2}\frac{\mu^{\gamma/2}\mu^{\gamma
  i }}{(\bar\lambda/\mu)^{2i}} \mu^{\gamma/2} 
\leq C\mu^{\gamma(i+1)} \frac{\mu^m}{(\bar\lambda/\mu)^{2(i+1)}},
\end{split}
\end{equation*}
which is exactly \ref{Ebound52} at $i+1$, with the right dependence
of constants and order of used derivatives of $H$ (up to $M+2i+2$), as claimed.

\smallskip

{\bf 5. (The bound \ref{Ebound42})} From the definitions
(\ref{def_aP_global1}), (\ref{def_aP_global}) we obtain, for all
$i=1\ldots N$ in virtue of Lemma \ref{lem_diagonal} (iii) and (\ref{imtih2}):
\begin{equation*}
\begin{split}
& \|\Psi_i\|_1\leq C\big(\mu^\gamma + \|H\|_{0,\gamma} + 
\|\mathcal{E}_{i-1}\|_{0,\gamma}\big) \leq C\mu^\gamma,
\\ & \|\nabla^2\Psi_i\|_0\leq C\big(\|H\|_{1,\gamma} + 
\|\mathcal{E}_{i-1}\|_{1,\gamma}\big) \leq C\mu^\gamma \big (\mu
+ \frac{\mu\mu^\gamma}{(\bar\lambda/\mu)^2}\big)\leq C \mu^\gamma\mu,
\end{split}
\end{equation*}
with $C$ depending on $\omega,\gamma, i$ and where we used the bounds in (\ref{ass_H0}) on
the derivatives of $H$ up to $2+2(i-1)$. This ends the proof of Proposition \ref{prop1}.
\end{proof}

\section{The quadruple step construction in two codimensions}\label{sec_stage_prep}

In preparation for the recursive construction of the ``stage'' in the
proof of Theorem \ref{thm_stage}, we first present its main building block, whose bounds 
we index using the eventual recursion counter $j=0\ldots K$. The given quantities are
referred to through the subscript $j$ while the derived quantities
carry the consecutive subscript $j+1$. Namely, we have:

\begin{proposition}\label{prop2}
Let $\omega\subset\R^2$ be open, bounded, smooth and let $N\geq 1$,
$\gamma\in (0,1)$. There exists $l_0\in (0,1)$
depending on $\omega$, and $\sigma_0\geq 1$ depending on $\omega,
\gamma, N$ such that the following holds. Given are the following
quantities:
\begin{itemize}
\item a constant $l>0$, an integer $M$, and frequencies
$\mu_{j-1}, \lambda_j, \mu_j, \lambda_{j+1}, \mu_{j+1}$, satisfying:
\begin{equation}\label{ass_de2}
\begin{split}
& l\leq l_0,\qquad \lambda_{j+1}^{1-\gamma}\geq \mu_j\sigma_0,\qquad M\geq 1, \\
& 1< \mu_{j-1}\leq\lambda_j\leq\mu_j\leq\lambda_{j+1}\leq\mu_{j+1},
\end{split}
\end{equation}
\item two positive auxiliary constants $B_{j}$ and $\tilde C_j$, satisfying:
\begin{equation}\label{ass_ce2}
(B_j/\tilde C_j)^{1/2} \leq\min\Big\{\frac{\mu_j}{\lambda_j}, \frac{\mu_{j+1}}{\lambda_{j+1}}\Big\},
\end{equation}
\item the vector and matrix fields $v_{j}, w_j\in\mathcal{C}^\infty(\bar\omega+\bar
B_{l}(0),\R^{2})$, $A_0\in\mathcal{C}^\infty(\bar\omega+\bar
B_{l}(0),\R^{2\times 2}_\sym)$, such that together with the derived field $\mathcal{D}_{j}
= A_0 - \big(\frac{1}{2} (\nabla v_j)^T\nabla v_j +\sym\nabla
w_j\big)$, they obey:
\begin{align*}
& \begin{array}{ll} 
 \|\nabla^{(m)}\mathcal{D}_j\|_0\leq \tilde C_j\mu_j^m
& \mbox{ for all } \; m=0\ldots M+3N+4, \vspace{2mm}
\end{array} 
\tag*{(\theequation)$_1$}\refstepcounter{equation} \label{ass_HQ2} \\
& \left\{\begin{array}{l}
\|\partial_1^{(t)}\partial_2^{(s)} \partial_{11}v^1_j\|_0\leq
B_{j}^{1/2}\mu_{j-1}^{\gamma/2}\lambda_j^{t+1}\mu_{j-1}^{s},  \qquad 
\|\partial_1^{(t)}\partial_2^{(s)} \partial_{12}v^1_j\|_0\leq
B_{j}^{1/2}\mu_{j-1}^{\gamma/2}\lambda_j^{t}\mu_{j-1}^{s+1}, \vspace{2mm} \\
\|\partial_1^{(t)}\partial_2^{(s)} \partial_{22}v^1_j\|_0\leq
B_{j}^{1/2}\mu_{j-1}^{\gamma/2}\lambda_j^{t-1}\mu_{j-1}^{s+2}, \vspace{2mm}\\
\|\partial_1^{(t)}\partial_2^{(s)} \partial_{11}v^2_j\|_0\leq
B_{j}^{1/2}\mu_{j-1}^{\gamma/2}\lambda_j^{t+2}\mu_{j}^{s-1} \qquad
\|\partial_1^{(t)}\partial_2^{(s)} \partial_{12}v^2_j\|_0\leq
B_{j}^{1/2}\mu_{j-1}^{\gamma/2}\lambda_j^{t+1}\mu_{j}^{s} \vspace{2mm}\\ 
\|\partial_1^{(t)}\partial_2^{(s)} \partial_{22}v^2_j\|_0\leq
B_{j}^{1/2}\mu_{j-1}^{\gamma/2}\lambda_j^{t}\mu_{j}^{s+1} \vspace{1mm}
\end{array} \right. \\ 
& \qquad \qquad \qquad\qquad \mbox{for all } \; t+s=0\ldots M+N+2.
\tag*{(\theequation)$_2$} \label{ass_HQ3}
\end{align*}
\end{itemize}
Then, there exist $v_{j+1}, w_{j+1}\in\mathcal{C}^\infty(\bar\omega+\bar
B_{l}(0),\R^{2})$, such that denoting the new derived field ${\mathcal{D}}_{j+1}
= A_0 - \big(\frac{1}{2} (\nabla v_{j+1})^T\nabla v_{j+1} +\sym\nabla w_{j+1}\big)$,
there hold the estimates:
\begin{itemize}
\item the counterparts of \ref{ass_HQ2} and \ref{ass_HQ3} at the new counter $j+1$:
\begin{align*}
& \begin{array}{l} \displaystyle{\|\nabla^{(m)}{\mathcal{D}}_{j+1}\|_0\leq C\tilde
C_j\mu_{j+1}^m \Big( \frac{1}{(\lambda_{j+1}/\mu_j)^N}
+ \frac{1}{(\mu_{j+1}/\lambda_{j+1})^2} + \frac{1}{(\mu_{j}/\lambda_{j})^2}\Big)} \vspace{2mm}\\
\qquad\qquad\qquad \qquad\qquad  \qquad\qquad 
\qquad\qquad \qquad\qquad \quad 
\mbox{ for all } \; m=0\ldots M, \vspace{2mm}\end{array}
\tag*{(\theequation)$_1$} \label{P2bound4}
\\ & \left\{\begin{array}{l}
\displaystyle{ \|\partial_1^{(t)}\partial_2^{(s)} \partial_{11}v^1_{j+1}\|_0\leq
C \tilde C_j^{1/2} \mu_{j}^{\gamma/2}\lambda_{j+1}^{t+1}\mu_j^s,}\vspace{2mm} \\
\displaystyle{\|\partial_1^{(t)}\partial_2^{(s)} \partial_{12}v^1_{j+1}\|_0\leq
C \tilde C_j^{1/2} \mu_{j}^{\gamma/2}\lambda_{j+1}^{t}\mu_j^{s+1},}\vspace{2mm} 
\\  \displaystyle{\|\partial_1^{(t)}\partial_2^{(s)} \partial_{22}v^1_{j+1}\|_0\leq
C \tilde C_j^{1/2} \mu_{j}^{\gamma/2}\lambda_{j+1}^{t-1}\mu_j^{s+2}
\Big(\frac{1}{(\lambda_j/\mu_{j-1})^2(\mu_{j}/\lambda_{j+1})} +1\Big),}\vspace{2mm}
\\
\displaystyle{\|\partial_1^{(t)}\partial_2^{(s)} \partial_{11}v^2_{j+1}\|_0\leq
C \tilde C_j^{1/2} \mu_{j}^{\gamma/2}\lambda_{j+1}^{t+2}\mu_{j+1}^{s-1}
\Big(\frac{1}{(\lambda_{j+1}/\lambda_j) (\lambda_{j+1}/\mu_{j+1})}+1\Big),}\vspace{2mm} \\ 
\displaystyle{\|\partial_1^{(t)}\partial_2^{(s)} \partial_{12}v^2_{j+1}\|_0\leq
C \tilde C_j^{1/2} \mu_{j}^{\gamma/2}\lambda_{j+1}^{t+1}\mu_{j+1}^{s},}
\vspace{2mm} \\ 
\displaystyle{\|\partial_1^{(t)}\partial_2^{(s)} \partial_{22}v^2_{j+1}\|_0\leq C \tilde C_j^{1/2} \mu_{j}^{\gamma/2}\lambda_{j+1}^{t}\mu_{j+1}^{s+1},
\qquad\qquad   \mbox{ for all } \; t+s=0\ldots M,}
\vspace{1mm} 
\end{array}\right. \tag*{(\theequation)$_2$} \label{P2bound2}
\end{align*}
with constants $C$ depending on $\omega, \gamma, N, M$,
\item bounds on lower derivative orders on displacements in $v$ and $w$:
\begin{align*}
& \begin{array}{l}  \|v_{j+1}- v_j\|_1
\leq C \tilde C_j^{1/2}\mu_j^{\gamma/2},\end{array}\vspace{4mm} 
\tag*{(\theequation)$_1$}\refstepcounter{equation} \label{P2bound1}\\
& \begin{array}{l} \displaystyle{\|w_{j+1}-w_j\|_1\leq C \tilde
    C_j^{1/2}\mu_j^{\gamma/2} \big(\|\nabla v_j\|_0 + 
\tilde C_j^{1/2}\mu_j^{\gamma/2} \big),} \vspace{2mm}\\
\displaystyle{\|\nabla^2(w_{j+1}-w_j)\|_0\leq C \tilde
  C_j^{1/2}\mu_j^{\gamma/2} \big(\|\nabla v_j\|_0 + 
\tilde C_j^{1/2}\mu_j^{\gamma/2} \big)\mu_{j+1},}
\end{array}
\vspace{4mm} \tag*{(\theequation)$_2$}\label{P2bound3} 
\end{align*}
with $C$ depending on $\omega,\gamma, N$.
\end{itemize}
\end{proposition}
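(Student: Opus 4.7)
I would construct $v_{j+1}, w_{j+1}$ via the double Kuiper ansatz
\begin{equation*}
v_{j+1} = v_j + \frac{a}{\lambda_{j+1}}\Gamma(\lambda_{j+1} x_1) e_1 + \frac{b}{\mu_{j+1}}\Gamma(\mu_{j+1} x_2) e_2,
\end{equation*}
together with the matching update of $w_j$ prescribed by Lemma \ref{lem_step2}, and then reduce the resulting oscillatory errors through the integration by parts of Lemma \ref{lem_IBP} and Corollary \ref{cor_IBP}. The first preparatory step is to invoke the K\"all\'en iteration of Proposition \ref{prop1} on the rescaled defect $\mathcal{D}_j/\tilde C_j$ with $\mu=\mu_j$ and $\bar\lambda=\lambda_{j+1}$, producing (after multiplication by $\tilde C_j^{1/2}$) an amplitude $a$ with $a^2\sim\tilde C_j\mu_j^{\gamma}$, a corrector $\Psi$ to be absorbed into $w_{j+1}$, and a residual $\mathcal F$ bounded by $\tilde C_j\mu_j^{m}/(\lambda_{j+1}/\mu_j)^{N}$. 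Two consecutive applications of Lemma \ref{lem_step2} in the two codimensions then cancel the rank-one pieces $a^2 e_1\otimes e_1$ and $b^2 e_2\otimes e_2$ of $a^2\Id$ modulo the explicit errors in \eqref{step_err2}, with the non-oscillatory part $\lambda_{j+1}^{-2}\nabla a\otimes\nabla a$ already absorbed by $\mathcal F$.

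\textbf{Cancellation mechanism and the choice of $b$.} Each zero-mean oscillatory error produced by the first Kuiper step has the form $\tfrac{\Gamma_0(\lambda_{j+1} x_1)}{\lambda_{j+1}}H$ with $\Gamma_0\in\{\Gamma,\bar\Gamma,\tbar\Gamma-1\}$ (all of zero mean, hence amenable to \eqref{recu_ibp}) and $H$ built from $a\nabla^2 v_j^1$, $\nabla^2 a$ and $\nabla a\otimes\nabla a$. Lemma \ref{lem_IBP} with $k$ sufficiently large rewrites each such term as $\sym\nabla L + G\,e_2\otimes e_2 + R$, where $R$ gains an extra power $\lambda_{j+1}^{-1}$; the $\sym\nabla L$ piece is swallowed by augmenting $w_{j+1}$ with a corrector $w_c$, while the scalar $G$ is swept into the \emph{amplitude} of the second step by setting $b^2 = a^2 - G$, so that the combined perturbation cancels $a^2 e_1\otimes e_1 + (a^2-G)e_2\otimes e_2 + G\,e_2\otimes e_2 = a^2\Id$. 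A symmetric treatment of the zero-mean errors of the second step, of the form $\tfrac{\tilde\Gamma_0(\mu_{j+1} x_2)}{\mu_{j+1}}\tilde H$, via Corollary \ref{cor_IBP} produces a further corrector $\tilde w_c$ and a leftover term $\tilde G\, e_1\otimes e_1$; crucially, $\tilde H$ involves $\nabla^2 v^2_{j+1}$, whose size is governed by the \emph{unbalanced} hypothesis \ref{ass_HQ3} (propagated through the first-step update), and this is precisely what forces $\tilde G$ to be small enough to fit within the $(\mu_{j+1}/\lambda_{j+1})^{-2}$ and $(\mu_j/\lambda_j)^{-2}$ savings asserted in \ref{P2bound4}.

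\textbf{Main obstacle: propagating the anisotropic estimates.} The principal technical difficulty is a faithful propagation of the anisotropic derivative bounds \ref{ass_HQ3} from level $j$ to level $j+1$. The amplitudes $a,b$ inherit isotropic $\mu_j$-scaling in every direction from Proposition \ref{prop1} combined with \ref{ass_HQ2}, so differentiating $\tfrac{a}{\lambda_{j+1}}\Gamma(\lambda_{j+1} x_1)$ via Leibniz produces exactly the $\lambda_{j+1}$-frequency in $x_1$ and the $\mu_j$-frequency in $x_2$ that appear as the leading contributions in \ref{P2bound2}; the non-standard factors $(\lambda_j/\mu_{j-1})^{-2}(\mu_j/\lambda_{j+1})^{-1}+1$ for $\partial_{22}v^1_{j+1}$ and $(\lambda_{j+1}/\lambda_j)^{-1}(\lambda_{j+1}/\mu_{j+1})^{-1}+1$ for $\partial_{11}v^2_{j+1}$ arise because $w_c$ and $\tilde w_c$ re-inject lower-frequency content coming from $\partial_{22} v^1_j$ and $\partial_{11} v^2_{j+1}$ respectively, each of which is controlled by \ref{ass_HQ3} at the relevant level. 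A careful count of how many derivatives are consumed by Proposition \ref{prop1} (needing $M+2N$ derivatives of $\mathcal D_j$), by $k$-fold iterations of Lemma \ref{lem_IBP} with $k\lesssim N+2$, and by the Leibniz/Fa\`a di Bruno expansions in the quadratic terms, shows that $M+3N+4$ derivatives of $\mathcal D_j$ are exactly what is needed, matching the hypothesis. The displacement estimates \ref{P2bound1} and \ref{P2bound3} then follow immediately by differentiating the ansatz once and inserting $\|a\|_0,\|b\|_0\lesssim \tilde C_j^{1/2}\mu_j^{\gamma/2}$ together with the pointwise bound $\|\nabla v_{j+1}-\nabla v_j\|_0\lesssim \tilde C_j^{1/2}\mu_j^{\gamma/2}$.
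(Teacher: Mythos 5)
Your proposal follows essentially the same route as the paper's proof: the K\"all\'en iteration of Proposition \ref{prop1} to diagonalize the defect and pre-absorb $\lambda_{j+1}^{-2}\nabla a\otimes\nabla a$, a first corrugation via Lemma \ref{lem_step2}, integration by parts (Lemma \ref{lem_IBP}) sweeping the $e_2\otimes e_2$ leftover $G$ into the second amplitude ($b^2=a^2+G$ in the paper's sign convention), a second corrugation, and Corollary \ref{cor_IBP} applied to $b\nabla^2 v_j^2$, whose unbalanced bounds from \ref{ass_HQ3} make the residual $\tilde G\,e_1\otimes e_1$ small. One minor correction: the extra factors in \ref{P2bound2} for $\partial_{22}v^1_{j+1}$ and $\partial_{11}v^2_{j+1}$ do not come from the correctors $w_c,\tilde w_c$ (those modify only $w$, hence cannot affect $\nabla^2 v_{j+1}$), but simply from re-expressing the inherited bounds \ref{ass_HQ3} on $\partial_{22}v^1_j$ and $\partial_{11}v^2_j$ in the new normalization $\tilde C_j^{1/2}\mu_j^{\gamma/2}$ with the new frequencies $\lambda_{j+1},\mu_j,\mu_{j+1}$, the corrugations themselves contributing only the ``$+1$'' part.
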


\begin{proof}
{\bf 1. (Applying Proposition \ref{prop1} and adding the first corrugation)} 
We apply Proposition \ref{prop1} on the set $\omega$ with parameters $ \gamma, N, l_0$,
$\sigma_0$, and $l, M$, and:
$$\mu=\mu_j,\quad \bar \lambda=\lambda_{j+1},\quad H=\frac{1}{\tilde C_j}\mathcal{D}_j,$$
upon validating (\ref{ass_de}) by (\ref{ass_de2}),
and  (\ref{ass_H0}) by \ref{ass_HQ2}.
Having thus obtained the fields $a, \Psi, \mathcal{F}$, 
we define $a_{j+1}\in \mathcal{C}^\infty(\bar\omega +
\bar B_l(0), \R)$, $\Psi_{j+1}\in\mathcal{C}^\infty(\bar\omega +
\bar B_l(0),\R^2)$, $\mathcal{F}_{j+1}\in
\mathcal{C}^\infty(\bar\omega+\bar B_l(0),\R^{2\times 2}_\sym)$ by: 
\begin{equation*}
a_{j+1} = \tilde C_{j}^{1/2}a, \qquad \Psi_{j+1}=\tilde C_j\Psi,
\qquad \mathcal{F}_{j+1} = \tilde C_j\mathcal{F}.
\end{equation*}
The field $\mathcal{F}_{j+1}$ is the first of the four error fields to
be constructed in the proof, given by:
\begin{equation}\label{def_Fk+1}
\mathcal{F}_{j+1} =  a_{j+1}^2\Id_2 + \sym\nabla \Psi_{j+1} - \mathcal{D}_j
+\frac{1}{\lambda_{j+1}^2}\nabla a_{j+1}\otimes\nabla a_{j+1}.
\end{equation}
Properties (\ref{Ebounds}) yield the following:
\begin{align*}
&  \frac{\bar C\tilde C_j}{2}\mu_j^\gamma\leq a_{j+1}^2\leq
\frac{3\bar C \tilde C_j}{2}\mu_j^\gamma \quad\mbox{ and }\quad
\frac{\bar C^{1/2} \tilde C_j^{1/2}}{2}\mu_j^{\gamma/2}\leq
a_{j+1}\leq \frac{3\bar C^{1/2} \tilde C_j^{1/2}}{2}\mu_j^{\gamma/2}, 
\tag*{(\theequation)$_1$}\refstepcounter{equation} \label{Ebound13}
\vspace{1mm} \\
& \|\nabla^{(m)} a_{j+1}^2\|_0\leq C\tilde
C_j\mu_j^{\gamma}\mu_j^m \; \quad \mbox{ and }\quad \|\nabla^{(m)} a_{j+1}\|_0\leq C\tilde
C_j^{1/2}\mu_j^{\gamma/2}\mu_j^m 
\quad\mbox{ for } \; m=0\ldots M + N +4, \vspace{1mm}
\tag*{(\theequation)$_2$}\label{Ebound23} \\ 
& \displaystyle{\|\nabla^{(m)}\mathcal{F}_{j+1}\|_0 \leq 
C \tilde C_j \frac{\mu_{j}^m}{(\lambda_{j+1}/\mu_j)^N}}
\quad\mbox{ for all } \; m=0\ldots M, 
\tag*{(\theequation)$_3$} \label{Ebound33}\\
& \|\Psi_{j+1}\|_1\leq C\tilde C_j\mu_j^\gamma, \qquad
\|\nabla^2\Psi_{j+1}\|_0\leq C\tilde C_j\mu_j^\gamma\mu_j.
\tag*{(\theequation)$_4$} \label{Ebound43}
\end{align*}
where the constant $\bar C$ depends only on $\omega, \gamma$, while
constants $C$ depend on: in \ref{Ebound43} on $\omega,\gamma,N$ and
in \ref{Ebound23}, \ref{Ebound33} on $\omega,\gamma, N, M$.
We now define the first of the four pairs of the 
intermediate fields $V_1, W_1 \in\mathcal{C}^\infty(\bar\omega +\bar B_{l}(0),\R^2)$,
by setting, in accordance with Lemma \ref{lem_step2} at $i,k=1$:
\begin{equation} \label{VW1}
\begin{split}
& V_1  = v_{j} + \frac{a_{j+1}}{\lambda_{j+1}} \Gamma(\lambda_{j+1} x_1)e_1,\\
& W_1 = w_{j} - \frac{a_{j+1}}{\lambda_{j+1}} \Gamma(\lambda_{j+1} x_1)\nabla v^1_j 
+ \frac{a_{j+1}}{\lambda_{j+1}^2}\bar\Gamma(\lambda_{j+1}x_1)\nabla a_{j+1} 
+ \frac{a_{j+1}^2}{\lambda_{j+1}}\dbar\Gamma(\lambda_{j+1} x_1)e_1 + \Psi_{j+1}.
\end{split}
\end{equation}
Consequently, recalling (\ref{def_Fk+1}) and (\ref{step_err2}) we get:
\begin{equation}\label{err1}
\begin{split}
& \mathcal{D}(V_1, W_1)   \doteq A_0-\big(\frac{1}{2}(\nabla V_1)^T\nabla
V_1+\sym\nabla W_1\big) \\ & = \mathcal{D}_{j}-\Big(\big(\frac{1}{2}(\nabla V_1)^T\nabla
V_1+\sym\nabla W_1\big) - \big(\frac{1}{2}(\nabla v_j)^T\nabla
v_j + \sym\nabla w_j\big) \Big) \\ &
= a_{j+1}^2\mbox{Id}_2 + \sym\nabla \Psi_{j+1} +
 \frac{1}{\lambda_{j+1}^2}\nabla a_{j+1}\otimes \nabla a_{j+1} - \mathcal{F}_{j+1}\\
&\quad - \Big(a_{j+1}^2e_1{\otimes} e_1 -
\frac{a_{j+1}}{\lambda_{j+1}}\Gamma(\lambda_{j+1}x_1) \nabla^2v_j^1 +
\frac{a_{j+1}}{\lambda_{j+1}^2}\bar\Gamma(\lambda_{j+1}x_1)\nabla^2a_{j+1}
\\ & \qquad\qquad \qquad \quad \;
+ \frac{1}{\lambda_{j+1}^2}\tbar\Gamma(\lambda_{j+1}x_1)\nabla
a_{j+1}{\otimes }\nabla a_{j+1} + \sym\nabla\Psi_{j+1}\Big).
\\ & = a_{j+1}^2 e_2\otimes e_2 - \mathcal{F}_{j+1}
+ \Big(\frac{a_{j+1}}{\lambda_{j+1}}\Gamma(\lambda_{j+1}x_1) \nabla^2v_j^1 -
\frac{a_{j+1}}{\lambda_{j+1}^2}\bar\Gamma(\lambda_{j+1}x_1)\nabla^2a_{j+1}
\\ & \qquad\qquad\qquad \qquad\qquad \qquad 
- \frac{1}{\lambda_{j+1}^2}(\tbar\Gamma-1)(\lambda_{j+1}x_1)\nabla
a_{j+1}{\otimes }\nabla a_{j+1} \Big).
\end{split}
\end{equation}

\smallskip

{\bf 2. (Applying Lemma \ref{lem_IBP})} 
We now apply the formula (\ref{ibp1}) with $k=N$ to $H$
being each of the matrix fields $a_{j+1}\nabla v_j^1$ and
$a_{j+1}\nabla^2a_{j+1}$ and $\nabla a_{j+1}\otimes \nabla a_{j+1}$,
to the frequency $\lambda = \lambda_{j+1}$, and to the 
profile $\Gamma_0$ being each of: $\Gamma$, $\bar\Gamma$ and
$\tbar\Gamma-1$, respectively. Define:
\begin{equation} \label{VW2}
\begin{split}
& V_2  = V_{1},\\
& W_2 = W_1 +\sum_{i=0}^N (-1)^i\frac{\Gamma_{i+1}(\lambda_{j+1} x_1)}{\lambda_{j+1}^{i+2}}
L_i\big(a_{j+1}\nabla^2v_j^1\big) 
\\ & \qquad - \sum_{i=0}^N (-1)^i\frac{\bar\Gamma_{i+1}(\lambda_{j+1} x_1)}{\lambda_{j+1}^{i+3}}
L_i\big(a_{j+1}\nabla^2a_{j+1}\big)
\\ & \qquad - \sum_{i=0}^N (-1)^i\frac{(\tbar\Gamma-1)_{i+1}(\lambda_{j+1} x_1)}{\lambda_{j+1}^{i+3}}
L_i\big(\nabla a_{j+1}\otimes \nabla a_{j+1}\big),
\end{split}
\end{equation}
where we recall the definition of $\{L_i\}_{i=0}^{N}$ in
(\ref{coef_ibp1}) and note that the recursive formula for taking the antiderivatives
in (\ref{recu_ibp}) returns periodic functions $\{\Gamma_i, \bar\Gamma_i,
(\tbar\Gamma-1)_i\}_{i=0}^{N+1}$ whose all derivatives are bounded. 
By Lemma \ref{lem_IBP} and (\ref{err1}) we write:
\begin{equation}\label{err2}
\begin{split}
\mathcal{D}(V_2, W_2)   & \doteq A_0-\big(\frac{1}{2}(\nabla V_2)^T\nabla
V_2+\sym\nabla W_2\big) = \mathcal{D}(V_1, W_1)-\sym\nabla (W_2-W_1) \\ &
= - \mathcal{F}_{j+1} +\mathcal{G}_{j+1}
+ \big(a_{j+1}^2 + G\big) e_2\otimes e_2,
\end{split}
\end{equation}
where the second error field $\mathcal{G}_{j+1}\in
\mathcal{C}^\infty(\bar\omega+\bar B_l(0),\R^{2\times 2}_\sym)$ is given by:
\begin{equation}\label{def_Gj+1}
\begin{split}
\mathcal{G}_{j+1}= \; &  (-1)^{N+1}\frac{\Gamma_{N+1}(\lambda_{j+1}x_1)}{\lambda_{j+1}^{N+2}}
\, \sym\nabla L_N\big(a_{j+1}\nabla^2v_j^1\big) 
\\ & - (-1)^{N+1}\frac{\bar\Gamma_{N+1}(\lambda_{j+1} x_1)}{\lambda_{j+1}^{N+3}}
\, \sym\nabla L_N\big(a_{j+1}\nabla^2a_{j+1}\big)
\\ & - (-1)^{N+1}\frac{(\tbar\Gamma-1)_{N+1}(\lambda_{j+1} x_1)}{\lambda_{j+1}^{N+3}}
\, \sym\nabla L_N\big(\nabla a_{j+1}\otimes \nabla a_{j+1}\big),
\end{split}
\end{equation}
together with the corrector function $G\in\mathcal{C}^\infty(\bar\omega +\bar B_l(0),\R)$ in:
\begin{equation}\label{def_G}
\begin{split}
G = \; & \sum_{i=0}^N (-1)^i\frac{\Gamma_{i}(\lambda_{j+1} x_1)}{\lambda_{j+1}^{i+1}}
P_i\big(a_{j+1}\nabla^2v_j^1\big) 
- \sum_{i=0}^N (-1)^i\frac{\bar\Gamma_{i}(\lambda_{j+1} x_1)}{\lambda_{j+1}^{i+2}}
P_i\big(a_{j+1}\nabla^2a_{j+1}\big)
\\ & - \sum_{i=0}^N (-1)^i\frac{(\tbar\Gamma-1)_{i}(\lambda_{j+1} x_1)}{\lambda_{j+1}^{i+2}}
P_i\big(\nabla a_{j+1}\otimes \nabla a_{j+1}\big)
\end{split}
\end{equation}
where $P_i$ are given by the formula (\ref{coef_ibp1}).
In the next two steps of our proof, we will show that:
\begin{equation}\label{bound_Gj+1}
\|\nabla^{(m)}\mathcal{G}_{j+1}\|_0\leq C \tilde C_j 
\frac{\lambda_{j+1}^m}{(\lambda_{j+1}/\mu_j)^{N}} 
\qquad \mbox{for all } \; m=0\ldots M,
\end{equation}
which is similar to the bound on the first error term
$\mathcal{F}_{j+1}$ in \ref{Ebound33}, and that:
\begin{equation}\label{bound_G}
\|\nabla^{(m)}G\|_0\leq C \tilde C_j
\frac{\mu_j^{\gamma}}{\lambda_{j+1}/\mu_j} \lambda_{j+1}^m.
\qquad \mbox{for all } \; m=0\ldots M+2,
\end{equation}
which is lower order than the bound on $a_{j+1}^2$ in \ref{Ebound23}.
The constants $C$ depend on $\omega, \gamma, N, M$. 

\smallskip

{\bf 3. (Proof of the estimate (\ref{bound_Gj+1}))} 
By (\ref{coef_ibp1}) we observe that the right hand side of
(\ref{def_Gj+1}) consists of three types of terms:
\begin{equation*}
\begin{split}
& I_{N+1} = \frac{\Gamma_{N+1}(\lambda_{j+1}x_1)}{\lambda_{j+1}^{N+2}}
\nabla^{(N+1)}\big(a_{j+1}\nabla^2v_j^1\big), \qquad 
II_{N+1} = \frac{\bar\Gamma_{N+1}(\lambda_{j+1}x_1)}{\lambda_{j+1}^{N+3}}
\nabla^{(N+1)}\big(a_{j+1}\nabla^2 a_{j+1}\big),
\\ & III_{N+1} = \frac{(\tbar\Gamma-1)_{N+1}(\lambda_{j+1}x_1)}{\lambda_{j+1}^{N+3}}
\nabla^{(N+1)}\big(\nabla a_{j+1}\otimes \nabla a_{j+1}\big).
\end{split}
\end{equation*}
Using the bounds for $\nabla^2v_j^1$ in \ref{ass_HQ3} and for
$a_{j+1}$ in \ref{Ebound23}, we obtain:
\begin{equation*}
\begin{split}
& \|\nabla^{(m)}I_{N+1}\|_0  \leq  C\sum_{p+q=m} \lambda_{j+1}^{p-N-2}\|\nabla^{(q+N+1)}
\big(a_{j+1}\nabla^2v_j^1\big)\|_0 \\ & \leq C \hspace{-0.8cm}
\sum_{\tiny\begin{array}{c} p+q=m\\  u+z = q+N+1\end{array}} \hspace{-0.8cm}
\lambda_{j+1}^{p-N-2}\|\nabla^{(u)} a_{j+1}\|_0\|\nabla^{(z+2)}v_j^1\|_0 
\leq C \tilde C_j^{1/2}B_j^{1/2}\mu_j^{\gamma}
\hspace{-0.8cm} \sum_{\tiny\begin{array}{c} p+q=m\\  u+z = q+N+1\end{array}} \hspace{-0.8cm}
\lambda_{j+1}^{p-N-2} \mu_j^u \lambda_j^{z+1}
\\ & \leq  C \tilde C_j^{1/2}B_j^{1/2}\mu_j^{\gamma} \sum_{p+q=m} \lambda_{j+1}^{p-N-2}
\mu_j^{q+N+1}\lambda_j \leq C \tilde C_j^{1/2}B_j^{1/2}\mu_j^{\gamma} 
\frac{\lambda_{j+1}^m}{(\lambda_{j+1}/\mu_j)^{N+1}
  (\lambda_{j+1}/\lambda_j)} \\ & \leq C \tilde C_j \lambda_{j+1}^m
\frac{(B_j/\tilde C_j)^{1/2}}{(\lambda_{j+1}/\mu_j)^{N}
  (\lambda_{j+1}/\lambda_j)} 
\end{split}
\end{equation*}
where the last inequality follows by the second assumption in
(\ref{ass_de2}) and with the constant $C$ depending on $\omega,
\gamma, N, M$. Similarly, we get:
\begin{equation*}
\begin{split}
& \|\nabla^{(m)}II_{N+1}\|_0 + \|\nabla^{(m)}III_{N+1}\|_0  
\\ & \leq C \hspace{-0.8cm}
\sum_{\tiny\begin{array}{c} p+q=m\\  u+z = q+N+1\end{array}} \hspace{-0.8cm}
\lambda_{j+1}^{p-N-3}\big(\|\nabla^{(u)} a_{j+1}\|_0\|\nabla^{(z+2)}a_{j+1}\|_0 
+ \|\nabla^{(u+1)} a_{j+1}\|_0\|\nabla^{(z+1)}a_{j+1}\|_0 \big)
\\ & \leq C \tilde C_j \mu_j^{\gamma}
\hspace{-0.8cm} \sum_{\tiny\begin{array}{c} p+q=m\\  u+z = q+N+1\end{array}} \hspace{-0.8cm}
\lambda_{j+1}^{p-N-3} \mu_j^{u+z+2} \leq  C \tilde C_j\mu_j^{\gamma} \sum_{p+q=m} \lambda_{j+1}^{p-N-3}
\mu_j^{q+N+3}\\ & \leq C \tilde C_j\mu_j^{\gamma} 
\frac{\lambda_{j+1}^m}{(\lambda_{j+1}/\mu_j)^{N+3}} \leq C \tilde C_j 
\frac{\lambda_{j+1}^m}{(\lambda_{j+1}/\mu_j)^{N+2}}.
\end{split}
\end{equation*}
In conclusion, (\ref{bound_Gj+1}) follow in view of (\ref{ass_ce2})
and above obtained bound:
$$\|\nabla^{(m)}\mathcal{G}_{j+1}\|_0 \leq  C \tilde C_j 
\frac{\lambda_{j+1}^m}{(\lambda_{j+1}/\mu_j)^{N}} 
\Big(\frac{(B_j/\tilde C_j)^{1/2}}{\lambda_{j+1}/\lambda_j} +1\Big).$$

\smallskip

{\bf 4. (Proof of the estimate (\ref{bound_G}))} 
As before, formulas (\ref{coef_ibp1}) imply that the right hand side of
(\ref{def_G}) consists of the following types of terms:
\begin{equation*}
\begin{split}
& IV_0= \frac{\Gamma(\lambda_{j+1}x_1)}{\lambda_{j+1}}
\,a_{j+1}\partial_{22}v_j^1,\qquad
IV_{i} = \frac{\Gamma_{i}(\lambda_{j+1}x_1)}{\lambda_{j+1}^{i+1}}
\partial_1^{(i-1)}\partial_2\big(a_{j+1}\partial_{12}v_j^1\big) 
\quad \mbox{ for } \; i= 1\ldots N, \\ & 
V_{i} = \frac{\Gamma_{i}(\lambda_{j+1}x_1)}{\lambda_{j+1}^{i+1}}
\partial_1^{(i-2)}\partial_2^{(2)}\big(a_{j+1}\partial_{11}v_j^1\big) 
\quad \mbox{ for }\; i= 2\ldots N,
\\ & II_{i} = \frac{\bar\Gamma_{i}(\lambda_{j+1}x_1)}{\lambda_{j+1}^{i+2}}
\nabla^{(i)}\big(a_{j+1}\nabla^2 a_{j+1}\big),
\quad  III_{i} = \frac{(\tbar\Gamma-1)_{i}(\lambda_{j+1}x_1)}{\lambda_{j+1}^{i+2}}
\nabla^{(i)}\big(\nabla a_{j+1}^{\otimes 2}\big)\quad  \mbox{ for } \;i= 0\ldots N.
\end{split}
\end{equation*}
By exactly the same argument as in the previous step, we obtain the bound:
\begin{equation*}
\|\nabla^{(m)}II_{i}\|_0 + \|\nabla^{(m)}III_{i}\|_0  
\leq C \tilde C_j \frac{\lambda_{j+1}^m}{(\lambda_{j+1}/\mu_j)^{i+1}}
\quad\mbox{ for all }\; i=0\ldots N.
\end{equation*}
The finer bounds on $IV_0$, $IV_i$ and $V_i$ follow from \ref{ass_HQ2} and \ref{Ebound23}:
\begin{equation*}
\begin{split}
& \|\partial_1^{(t)}\partial_2^{(s)}IV_0\|_0  
\leq  C\sum_{p_1+q_1=t} \lambda_{j+1}^{p_1-1}\|\partial_1^{(q_1)}\partial_2^{(s)}
\big(a_{j+1}\partial_{22}v_j^1\big)\|_0 \\ & 
\qquad\qquad\qquad \leq C \hspace{-0.5cm}
\sum_{\tiny\begin{array}{c} p_1+q_1=t \\  u_1+z_1 = q_1 \\ u_2+z_2=s \end{array}} \hspace{-0.5cm}
\lambda_{j+1}^{p_1-1}\|\partial_1^{(u_1)}\partial_2^{(u_2)} a_{j+1}\|_0
\|\partial_1^{(z_1)}\partial_2^{(z_2+2)} v_j^1\|_0 
\\ & \leq C \tilde C_j^{1/2}B_j^{1/2}\mu_j^{\gamma} \hspace{-0.5cm}
\sum_{\tiny\begin{array}{c} p_1+q_1=t \\  u_1+z_1 = q_1 \\ u_2+z_2=s \end{array}} \hspace{-0.5cm}
\lambda_{j+1}^{p_1-1}\mu_j^{u_1+u_2}\lambda_j^{z_1-1}\mu_{j-1}^{z_2+2}
\leq  C \tilde C_j^{1/2}B_j^{1/2}\mu_j^{\gamma} \sum_{p_1+q_1=t} \lambda_{j+1}^{p_1-1}
\mu_j^{q_1+s}\lambda_j^{-1} \mu_{j-1}^2 \\ & 
\leq C \tilde C_j^{1/2}B_j^{1/2}\mu_j^{\gamma} 
\frac{\lambda_{j+1}^t\mu_j^s}{\lambda_{j+1}\lambda_j/\mu_{j-1}^2} 
= C \tilde C_j \lambda_{j+1}^t\mu_j^s\mu_j^\gamma
\frac{(B_j/\tilde C_j)^{1/2}}{(\mu_{j}/\lambda_j)(\lambda_j/\mu_{j-1})^2}
\cdot \frac{1}{\lambda_{j+1}/\mu_j}.
\end{split}
\end{equation*}
Similarly, for all $i=1\ldots N$ and with $C$ depending on $\omega,
\gamma, N, M$, we get:
\begin{equation*}
\begin{split}
& \|\partial_1^{(t)}\partial_2^{(s)}IV_i\|_0  
\leq  C\sum_{p_1+q_1=t} \lambda_{j+1}^{p_1-i-1}\|\partial_1^{(q_1+i-1)}\partial_2^{(s+1)}
\big(a_{j+1}\partial_{12}v_j^1\big)\|_0 \\ & 
\qquad\qquad\qquad \leq C \hspace{-0.6cm}
\sum_{\tiny\begin{array}{c} p_1+q_1=t \\  u_1+z_1 = q_1 +i-1 \\ u_2+z_2=s+1 \end{array}} \hspace{-0.7cm}
\lambda_{j+1}^{p_1-i-1}\|\partial_1^{(u_1)}\partial_2^{(u_2)} a_{j+1}\|_0
\|\partial_1^{(z_1+1)}\partial_2^{(z_2+1)} v_j^1\|_0 
\\ & \leq C \tilde C_j^{1/2}B_j^{1/2}\mu_j^{\gamma} \hspace{-0.6cm}
\sum_{\tiny\begin{array}{c} p_1+q_1=t \\  u_1+z_1 = q_1 +i-1 \\ u_2+z_2=s+1 \end{array}} \hspace{-0.7cm}
\lambda_{j+1}^{p_1-i-1}\mu_j^{u_1+u_2}\lambda_j^{z_1} \mu_{j-1}^{z_2-1} 
\leq  C \tilde C_j^{1/2}B_j^{1/2}\mu_j^{\gamma} \sum_{p_1+q_1=t} \lambda_{j+1}^{p_1-i-1}
\mu_j^{q_1+i+ s}\mu_{j-1}^{-1} \\ & 
\leq C \tilde C_j^{1/2}B_j^{1/2}\mu_j^{\gamma} 
\frac{\lambda_{j+1}^t\mu_j^s}{(\lambda_{j+1}/\mu_j)^i(\lambda_{j+1}/\mu_{j-1})} 
\leq C \tilde C_j^{1/2}B_j^{1/2}\mu_j^{\gamma} 
\frac{\lambda_{j+1}^t\mu_j^s}{(\lambda_{j+1}/\mu_j)^2(\mu_{j}/\mu_{j-1})} 
\\ & = C \tilde C_j \lambda_{j+1}^t\mu_j^s\mu_j^\gamma
\frac{(B_j/\tilde C_j)^{1/2}}{(\mu_{j}/\lambda_j)(\lambda_{j+1}/\mu_j)(\lambda_j/\mu_{j-1})}
\cdot \frac{1}{\lambda_{j+1}/\mu_j},
\end{split}
\end{equation*}
and also, for all $i=2\ldots N$:
\begin{equation*}
\begin{split}
& \|\partial_1^{(t)}\partial_2^{(s)}V_i\|_0  
\leq  C\sum_{p_1+q_1=t} \lambda_{j+1}^{p_1-i-1}\|\partial_1^{(q_1+i-2)}\partial_2^{(s+2)}
\big(a_{j+1}\partial_{11}v_j^1\big)\|_0 \\ & 
\qquad\qquad\quad \; \, \leq C \hspace{-0.6cm}
\sum_{\tiny\begin{array}{c} p_1+q_1=t \\  u_1+z_1 = q_1 +i-2 \\ u_2+z_2=s+2 \end{array}} \hspace{-0.7cm}
\lambda_{j+1}^{p_1-i-1}\|\partial_1^{(u_1)}\partial_2^{(u_2)} a_{j+1}\|_0
\|\partial_1^{(z_1+2)}\partial_2^{(z_2)} v_j^1\|_0 
\\ & \leq C \tilde C_j^{1/2}B_j^{1/2}\mu_j^{\gamma} \hspace{-0.6cm}
\sum_{\tiny\begin{array}{c} p_1+q_1=t \\  u_1+z_1 = q_1 +i-2 \\ u_2+z_2=s+2 \end{array}} \hspace{-0.7cm}
\lambda_{j+1}^{p_1-i-1}\mu_j^{u_1+u_2}\lambda_j^{z_1+1} \mu_{j-1}^{z_2} 
\leq  C \tilde C_j^{1/2}B_j^{1/2}\mu_j^{\gamma} \sum_{p_1+q_1=t} \lambda_{j+1}^{p_1-i-1}
\mu_j^{q_1+i+ s}\lambda_j\\ & 
\leq C \tilde C_j^{1/2}B_j^{1/2}\mu_j^{\gamma} 
\frac{\lambda_{j+1}^t\mu_j^s}{(\lambda_{j+1}/\mu_j)^i(\lambda_{j+1}/\lambda_{j})} 
\leq C \tilde C_j^{1/2}B_j^{1/2}\mu_j^{\gamma} 
\frac{\lambda_{j+1}^t\mu_j^s}{(\lambda_{j+1}/\mu_j)^2(\lambda_{j+1}/\lambda_{j})}, 
\end{split}
\end{equation*}
which may be rewritten as:
$$\|\partial_1^{(t)}\partial_2^{(s)}V_i\|_0  
\leq C \tilde C_j \lambda_{j+1}^t\mu_j^s\mu_j^\gamma
\frac{(B_j/\tilde C_j)^{1/2}}{(\mu_{j}/\lambda_j)(\lambda_{j+1}/\mu_j)^2}
\cdot \frac{1}{\lambda_{j+1}/\mu_j}.$$
Gathering the above displayed formulas, we arrive at:
$$\|\partial_1^{(t)}\partial_2^{(s)}G\|_0 \leq  C \tilde C_j
{\lambda_{j+1}^t\mu_j^s \mu_j^\gamma}
\frac{(B_j/\tilde C_j)^{1/2}}{(\mu_{j}/\lambda_j)}\cdot \frac{1}{\lambda_{j+1}/\mu_j},$$
obtaining the bound (\ref{bound_G}) in view of the assumptions (\ref{ass_ce2}).

\smallskip

{\bf 5. (Adding the second corrugation)} We define the third pair of intermediate
fields $V_3, W_3\in\mathcal{C}^\infty(\bar\omega +\bar B_{l}(0),\R^2)$,
using (\ref{defi_per2}) with $i, k=2$ and the amplitude dictated by (\ref{err2}):
\begin{equation}\label{VW3}
\begin{split}
& V_3 = V_2 + \frac{b_{j+1}}{\mu_{j+1}} \Gamma(\mu_{j+1} x_2)e_2, 
\\ & W_3 = W_2 - \frac{b_{j+1}}{\mu_{j+1}} \Gamma(\mu_{j+1} x_2)\nabla v^2_j
+ \frac{b_{j+1}}{\mu_{j+1}^2}\bar\Gamma(\mu_{j+1}x_2)\nabla b_{j+1}+
\frac{b_{j+1}^2}{\mu_{j+1}}\dbar\Gamma(\mu_{j+1} x_2)e_2,
\\ & \mbox{where }\; 
b_{j+1}^2 = a_{j+1}^2 + G.
\end{split}
\end{equation}
Firstly, we argue that $b_{j+1}\in\mathcal{C}^\infty(\bar\omega + \bar B_{l}(0), \R)$ is well
defined and it satisfies:
\begin{align*}
&  \frac{\bar C\tilde C_j}{4}\mu_j^\gamma\leq b_{j+1}^2\leq
{2\bar C \tilde C_j}\mu_j^\gamma \quad\mbox{ and so }\quad
\|b_{j+1}\|_0\leq C\tilde C_j^{1/2}\mu_j^{\gamma/2}, 
\tag*{(\theequation)$_1$}\refstepcounter{equation} \label{b1}\\
&  \|\nabla^{(m)} b_{j+1}\|_0\leq C\tilde
C_j^{1/2}\mu_j^{\gamma/2}\frac{\lambda_{j+1}^m}{\lambda_{j+1}/\mu_j} 
\quad\mbox{ for all } \; m=1\ldots M+2, \vspace{3mm} \tag*{(\theequation)$_2$}\label{b2} 
\end{align*}
where $C$ in \ref{b1} depends on $\omega,\gamma, N$, and in \ref{b2} on
$\omega,\gamma, N, M$. Indeed, by (\ref{bound_G}) we have:
$$\|G\|_0\leq \tilde C_j \mu_j^\gamma\frac{C}{\lambda_{j+1}/\mu_j},$$
so taking $\lambda_{j+1}/\mu_j$ sufficiently large by assigning 
large $\sigma_0$ in the second condition of (\ref{ass_de2}), in
function of the pre-fixed $\omega, \gamma, N$, there follows \ref{b1}
through \ref{Ebound13}. From the same estimates:
$$\|\nabla^{(m)} b_{j+1}^2\|_0\leq C\tilde C_j\mu_j^\gamma 
\frac{\lambda_{j+1}^m}{\lambda_{j+1}/\mu_j} \quad\mbox{ for all } \; m=1\ldots M+2,$$
which implies \ref{b2} by an application of the Fa\'a di Bruno formula:
\begin{equation*}
\begin{split}
\|\nabla^{(m)}b_{j+1}\|_0 & \leq C\Big\|\sum_{p_1+2p_2+\ldots
  mp_m=m} b_{j+1}^{2(1/2-p_1-\ldots -p_m)}\prod_{z=1}^m\big|\nabla^{(z)}b_{j+1}^2\big|^{p_z}\Big\|_0
\\ & \leq C\|b_{j+1}\|_0 \sum_{p_1+2p_2+\ldots
  mp_s=m}\prod_{z=1}^m\Big(\frac{\|\nabla^{(z)}b_{j+1}^2\|_0}{
  \tilde C \tilde C_j \mu^\gamma}\Big)^{p_z}
\leq C \tilde C_j^{1/2}\mu_j^{\gamma/2}\frac{\lambda_{j+1}^m}{\lambda_{j+1}/\mu_j}.
\end{split}
\end{equation*}

Secondly, recalling (\ref{err2}) and Lemma \ref{lem_step2}, we note that:
\begin{equation}\label{err3}
\begin{split}
& \mathcal{D}(V_3, W_3)   \doteq A_0-\big(\frac{1}{2}(\nabla V_3)^T\nabla
V_3+\sym\nabla W_3\big) \\ & = \mathcal{D}(V_2, W_2)  
-\Big(\big(\frac{1}{2}(\nabla V_3)^T\nabla
V_3+\sym\nabla W_3\big) - \big(\frac{1}{2}(\nabla V_2)^T\nabla
V_2 + \sym\nabla W_2\big) \Big) \\ &
= -\mathcal{F}_{j+1}+\mathcal{G}_{j+1} 
+ \frac{b_{j+1}}{\mu_{j+1}}\Gamma(\mu_{j+1}x_2) \nabla^2v_j^2 -\mathcal{H}_{j+1},
\end{split}
\end{equation}
with the third error field $\mathcal{H}_{j+1}\in
\mathcal{C}^\infty(\bar\omega+ \bar B_l(0), \R^{2\times 2}_\sym)$  given by:
\begin{equation}\label{def_Hj+1}
\mathcal{H}_{j+1} =
\frac{b_{j+1}}{\mu_{j+1}^2}\bar\Gamma(\mu_{j+1}x_2)\nabla^2b_{j+1}
+ \frac{1}{\mu_{j+1}^2}\tbar\Gamma(\mu_{j+1}x_2)\nabla
b_{j+1}{\otimes }\nabla b_{j+1}.
\end{equation}
We estimate, using \ref{b1} and \ref{b2}:
\begin{equation}\label{bound_Hj+1}
\begin{split}
& \|\nabla^{(m)}\mathcal{H}_{j+1}\|_0
\leq C\sum_{p+q=m} \mu_{j+1}^{p-2}\Big(\|\nabla^{(q)}\big(b_{j+1}\nabla^2b_{j+1}\big)\|_0
+ \|\nabla^{(q)}\big(\nabla b_{j+1}\otimes \nabla b_{j+1}\big)\|_0
\\ &  \leq C \hspace{-0.5cm}
\sum_{\tiny\begin{array}{c} p+q=m\\  u+z = q\end{array}} \hspace{-0.5cm}
\mu_{j+1}^{p-2}\big(\|\nabla^{(u)} b_{j+1}\|_0\|\nabla^{(z+2)}b_{j+1}\|_0 
+ \|\nabla^{(u+1)} b_{j+1}\|_0\|\nabla^{(z+1)}b_{j+1}\|_0 \big)
\\ & \leq C \tilde C_j \mu_j^{\gamma} \sum_{p+q=m}
\mu_{j+1}^{p-2} \frac{\lambda_{j+1}^{t+q+2} }{\lambda_{j+1}/\mu_j}
\leq C \tilde C_j \frac{\mu_{j+1}^m}{(\mu_{j+1}/\lambda_{j+1})^2}
\cdot \frac{\mu_j^\gamma}{\lambda_{j+1}/\mu_j}
\\ & \leq C \tilde C_j \frac{\mu_{j+1}^m}{(\mu_{j+1}/\lambda_{j+1})^2}
\quad\mbox{ for all }\; m=0\ldots M,
\end{split}
\end{equation}
with $C$ depending on $\omega, \gamma, N, M$.

\smallskip

{\bf 6. (Applying Corollary \ref{cor_IBP})} 
We now apply (\ref{ibp2}) with $k=1$ to $H=b_{j+1}\nabla^2v_j^2$
the frequency $\lambda = \mu_{j+1}$, and the 
profile $\Gamma_0=\Gamma$, and define the final fields:
\begin{equation} \label{VW4}
\begin{split}
& v_{j+1}  = V_{3},\\
& w_{j+1} = W_3 +\sum_{i=0}^1 (-1)^i\frac{\Gamma_{i+1}(\mu_{j+1} x_2)}{\mu_{j+1}^{i+2}}
\tilde L_i\big(b_{j+1}\nabla^2v_j^2\big),
\end{split}
\end{equation}
where we recall the definition of $\{\tilde L_i\}_{i=0}^{1}$ in
(\ref{coef_ibp2}) and the recursive formula of taking the antiderivatives
in (\ref{recu_ibp}). By Corollary \ref{cor_IBP} and (\ref{err3}) we obtain:
\begin{equation}\label{err4}
\mathcal{D}_{j+1} = \mathcal{D}(V_3, W_3)-\sym\nabla (w_{j+1}-W_3) 
= - \mathcal{F}_{j+1} +\mathcal{G}_{j+1} -\mathcal{H}_{j+1} 
+\mathcal{I}_{j+1} 
\end{equation}
where the final, fourth error field $\mathcal{I}_{j+1}\in
\mathcal{C}^\infty(\bar\omega+\bar B_l(0),\R^{2\times 2}_\sym)$ is given by:
\begin{equation}\label{def_Ij+1}
\mathcal{I}_{j+1}= \frac{\Gamma_{2}(\mu_{j+1}x_2)}{\mu_{j+1}^{3}}
\, \sym\nabla \tilde L_1\big(b_{j+1}\nabla^2v_j^2\big) 
+\sum_{i=0}^1 (-1)^{i}\frac{\Gamma_{i}(\mu_{j+1} x_2)}{\mu_{j+1}^{i+1}}
\tilde P_i\big(b_{j+1}\nabla^2v_j^2\big) e_1\otimes e_1,
\end{equation}
and can be estimated as follows:
\begin{equation*}
\begin{split}
& \|\partial_1^{(t)}\partial_2^{(s)}\mathcal{I}_{j+1}\|_0 \leq 
C\sum_{p+q=s}\mu_{j+1}^{p-3}\|\nabla^{(t+q+2)}\big(b_{j+1}\nabla^2v_j^2\big)\|_0
\\ & \qquad \qquad\qquad\qquad + C\hspace{-1mm}
\sum_{p+q=s}\mu_{j+1}^{p-1}\|\partial_1^{(t)} \partial_2^{(q)}\big(b_{j+1}\partial_{11}v_j^2\big)\|_0 
+ C \hspace{-1mm} \sum_{p+q=s}\mu_{j+1}^{p-2}
\|\partial_1^{(t+1)} \partial_2^{(q)}\big(b_{j+1}\partial_{12}v_j^2\big)\|_0
\\ & \leq C \hspace{-6mm} \sum_{\tiny\begin{array}{c} p+q=s \\  u+z = t+q+2  \end{array}} 
\hspace{-6mm} \mu_{j+1}^{p-3}\|\nabla^{(u)} b_{j+1}\|_0\|\nabla^{(z+2)}v_j^2\big)\|_0
+ C\hspace{-4mm} \sum_{\tiny\begin{array}{c} p+q=s \\  u_1+z_1 =  t \\  u_2+z_2 = q \end{array}} 
\hspace{-4mm}  \mu_{j+1}^{p-1}\|\nabla^{(u_1+u_2)} b_{j+1}\|_0\|\partial_1^{(z_1+2)}\partial_2^{(z_2)}v_j^2\|_0 
\\ & \quad + C\hspace{-4mm} \sum_{\tiny\begin{array}{c} p+q=s \\  u_1+z_1 =  t+1 \\  u_2+z_2 = q \end{array}} 
\hspace{-4mm} \mu_{j+1}^{p-2}\|\nabla^{(u_1+u_2)}
b_{j+1}\|_0\|\partial_1^{(z_1+1)}\partial_2^{(z_2+1)}v_j^2\|_0  
\end{split}
\end{equation*}
where using the bounds on $\nabla^2v_j^2$
in \ref{ass_HQ3} together with \ref{b2}, implies:
\begin{equation*}
\begin{split}
& \|\partial_1^{(t)}\partial_2^{(s)}\mathcal{I}_{j+1}\|_0
\leq C\tilde C_j^{1/2}B_j^{1/2}\mu_j^\gamma \Big(\hspace{-4mm}
\sum_{\tiny\begin{array}{c} p+q=s \\  u+z = t+q+2  \end{array}}
\hspace{-6mm} \mu_{j+1}^{p-3}\lambda_{j+1}^u\mu_j^{z+1}
\\ & \qquad \qquad\qquad \qquad\qquad \hspace{-4mm}
+ \hspace{-4mm} \sum_{\tiny\begin{array}{c} p+q=s \\  u_1+z_1 =  t \\  u_2+z_2 = q \end{array}} 
\hspace{-4mm} \mu_{j+1}^{p-1}\lambda_{j+1}^{u_1+u_2}\lambda_{j}^{z_1+2}\mu_j^{z_2-1} + \hspace{-4mm} \sum_{\tiny\begin{array}{c} p+q=s \\  u_1+z_1 =  t+1 \\  u_2+z_2 = q \end{array}} 
\hspace{-4mm} \mu_{j+1}^{p-2}\lambda_{j+1}^{u_1+u_2} \lambda_{j}^{z_1+1}\mu_j^{z_2}\Big)
\\ & \leq C\tilde C_j^{1/2}B_j^{1/2}\mu_j^\gamma \Big(
\sum_{p+q=s} \mu_{j+1}^{p-3}\lambda_{j+1}^{t+q+2}\mu_j + 
\sum_{p+q=s} \mu_{j+1}^{p-1}\lambda_{j+1}^{t+q}\lambda_j^2\mu_j^{-1}
+ \sum_{p+q=s} \mu_{j+1}^{p-2}\lambda_{j+1}^{t+q+1} \lambda_{j}\Big)
\\ & \leq  C\tilde C_j^{1/2}B_j^{1/2}
\frac{\lambda_{j+1}^t\mu_{j+1}^s\mu_j^\gamma}{\lambda_{j+1}/\mu_j}
\Big(\frac{1}{(\mu_{j+1}/\lambda_{j+1})^3} + \frac{1}{(\mu_{j+1}/\lambda_{j+1})(\mu_j/\lambda_{j})^2} 
+ \frac{1}{(\mu_{j+1}/\lambda_{j+1})^2(\mu_j/\lambda_j)} \Big).
\end{split}
\end{equation*}

In conclusion, the second assumption in (\ref{ass_de2}) and (\ref{ass_ce2}) yield:
\begin{equation}\label{bound_Ij+1}
\|\nabla^{(m)}\mathcal{I}_{j+1}\|_0 \leq 
C\tilde C_j \mu_{j+1}^m\Big(
\frac{1}{(\mu_{j+1}/\lambda_{j+1})^2} + \frac{1}{(\mu_{j}/\lambda_{j})^2}\Big)
\qquad\mbox{ for all }\; m=0\ldots M.
\end{equation}

\smallskip

{\bf 7. (Proofs of \ref{P2bound4} -- \ref{P2bound1}) } We deduce \ref{P2bound4} directly,
by the decomposition (\ref{err4}), in view of the four error estimates  \ref{Ebound33},
(\ref{bound_Gj+1}),  (\ref{bound_Hj+1}) and (\ref{bound_Ij+1}).
To show \ref{P2bound2} note that:
\begin{equation*}
\begin{split}
& \|\partial_1^{(t)}\partial_2^{(s)}\big(v_{j+1}^1-v_j^1\big)\|_0\leq C
\sum_{p+q=t}\lambda_{j+1}^{p-1}\|\nabla^{(q+s)} a_{j+1}\|_0 \leq C \tilde
C_j^{1/2}\mu_j^{\gamma/2} \sum_{p+q=t}\lambda_{j+1}^{p-1}\mu_j^{q+s}
\\ & \qquad\qquad \qquad\qquad \quad \; \; 
\leq C C_j^{1/2}\mu_j^{\gamma/2} \lambda_{j+1}^{t-1}\mu_j^{s},
\\ & \|\partial_1^{(t)}\partial_2^{(s)}\big(v_{j+1}^2-v_j^2\big)\|_0\leq C
\sum_{p+q=s}\mu_{j+1}^{p-1}\|\nabla^{(q+t)} b_{j+1}\|_0 \leq C \tilde
C_j^{1/2}\mu_j^{\gamma/2} \sum_{p+q=s}\mu_{j+1}^{p-1}\lambda_{j+1}^{q+t}
\\ & \qquad\qquad \qquad\qquad \quad \; \; 
\leq C C_j^{1/2}\mu_j^{\gamma/2} \lambda_{j+1}^{t}\mu_{j+1}^{s-1},
\end{split}
\end{equation*}
in virtue of definitions (\ref{VW1}), (\ref{VW3}) and the bounds
\ref{Ebound23}, \ref{b1}, \ref{b2}. In particular, the above also
implies \ref{P2bound1}. Further, combined with the assumption
\ref{ass_HQ3}, we get, for all $t+s=0\ldots M$ and with $C$
depending on $\omega, \gamma, N, M$:
\begin{equation*}
\begin{split}
& \|\partial_1^{(t)}\partial_2^{(s)} \partial_{11}v^1_{j+1}\|_0\leq
C \tilde C_j^{1/2} \mu_{j}^{\gamma/2}\lambda_{j+1}^{t+1}\mu_j^s
\Big(\frac{(B_j/\tilde C_j)^{1/2}}{\lambda_{j+1}/\lambda_j} +1\Big),
\\ & \|\partial_1^{(t)}\partial_2^{(s)} \partial_{12}v^1_{j+1}\|_0\leq
C \tilde C_j^{1/2} \mu_{j}^{\gamma/2}\lambda_{j+1}^{t}\mu_j^{s+1}
\Big(\frac{(B_j/\tilde C_j)^{1/2}}{\mu_{j}/\mu_{j-1}} +1\Big),
\\ &   \|\partial_1^{(t)}\partial_2^{(s)} \partial_{22}v^1_{j+1}\|_0\leq
C \tilde C_j^{1/2} \mu_{j}^{\gamma/2}\lambda_{j+1}^{t-1}\mu_j^{s+2}
\Big(\frac{(B_j/\tilde C_j)^{1/2}}{(\mu_j/\mu_{j-1})^2(\lambda_{j}/\lambda_{j+1})} +1\Big),
\\ & 
\|\partial_1^{(t)}\partial_2^{(s)} \partial_{11}v^2_{j+1}\|_0\leq
C \tilde C_j^{1/2} \mu_{j}^{\gamma/2}\lambda_{j+1}^{t+2}\mu_{j+1}^{s-1}
\Big(\frac{(B_j/\tilde C_j)^{1/2}}{(\lambda_{j+1}/\lambda_j)^2(\mu_j/\mu_{j+1})} +1\Big),
\\ & \|\partial_1^{(t)}\partial_2^{(s)} \partial_{12}v^2_{j+1}\|_0\leq
C \tilde C_j^{1/2} \mu_{j}^{\gamma/2}\lambda_{j+1}^{t+1}\mu_{j+1}^{s}
\Big(\frac{(B_j/\tilde C_j)^{1/2}}{\lambda_{j+1}/\lambda_j} +1\Big),
\\ & \|\partial_1^{(t)}\partial_2^{(s)} \partial_{22}v^2_{j+1}\|_0\leq
C \tilde C_j^{1/2} \mu_{j}^{\gamma/2}\lambda_{j+1}^{t}\mu_{j+1}^{s+1}
\Big(\frac{(B_j/\tilde C_j)^{1/2}}{\mu_{j+1}/\mu_j} +1\Big).
\end{split}
\end{equation*}
This yields \ref{P2bound2} recalling the assumption (\ref{ass_ce2}).

\smallskip

{\bf 8. (Proof of \ref{P2bound3} - corrugations bounds) } Recall the decomposition:
\begin{equation}\label{if}
w_{j+1} - w_j = (W_1-w_j) + (W_2-W_1) + (W_3-W_2) + (w_{j+1}-W_3).
\end{equation}
By (\ref{VW1}), \ref{Ebound23}, \ref{Ebound43}, \ref{ass_HQ3} and
the assumption (\ref{ass_ce2}) we get:
\begin{equation*}
\begin{split}
& \|W_1 - w_j\|_1\leq C\Big(\|\Psi_{j+1}\|_1
 + \|a_{j+1}\|_0\|\nabla v_j^1\|_0 + \|a_{j+1}\|_0^2 
\\ & \qquad\qquad \qquad \qquad  +\frac{\|\nabla a_{j+1}\|_0\|\nabla
  v_j^1\|_0 + \|a_{j+1}\|_0\|\nabla^2 v_{j}^1\|_0 +
  \|a_{j+1}\|_0 \|\nabla a_{j+1}\|_0}{\lambda_{j+1}} \\
& \qquad\qquad \qquad \qquad  +\frac{\| a_{j+1}\|_0\|\nabla^2
  a_{j+1}\|_0 + \|\nabla a_{j+1}\|_0^2}{\lambda_{j+1}^2} \Big)\\
& \leq C \tilde C_j^{1/2}\mu_j^{\gamma/2} 
\Big(\|\nabla v_j^1\|_0 + \tilde C_j^{1/2}\mu_j^{\gamma/2}  
+\frac{B_j^{1/2}\mu_{j-1}^{\gamma/2}}{\lambda_{j+1}/\lambda_j}\Big)
\leq C \tilde C_j^{1/2}\mu_j^{\gamma/2} \big(\|\nabla v_j^1\|_0 +
\tilde C_j^{1/2}\mu_j^{\gamma/2} \big),
\end{split}
\end{equation*}
with constants $C$ depending on $\omega,\gamma, N$. Similarly:
\begin{equation*}
\begin{split}
& \|\nabla^2 (W_1 - w_{j})\|_0\leq 
C \Big( \|\nabla^2\Psi_{j+1}\|_0 + 
\lambda_{j+1}\big(\|a_{j+1}\|_0\|\nabla v_j^1\|_0+ \|a_{j+1}\|_0^2\big) 
\\ & \qquad\qquad\qquad \qquad \qquad
+ \big(\|\nabla a_{j+1}\|_0\|\nabla v_j^1\|_0 + \|a_{j+1}\|_0\|\nabla^2 v_j^1\|_0  
+ \|a_{j+1}\|_0\|\nabla a_{j+1}\|_0\big)
\\ & \qquad\qquad\qquad \qquad\qquad + \frac{\|\nabla^2a_{j+1}\|_0\|\nabla v_j^1\|_0 + \|\nabla
  a_{j+1}\|_0\|\nabla^2 v_j^1\|_0 + \|a_{j+1}\|_0\|\nabla^3 v_j^1\|_0}{\lambda_{j+1}} 
\\ & \qquad\qquad\qquad\qquad\qquad + \frac{\|a_{j+1}\|_0\|\nabla^2a_{j+1}\|_0  + \|\nabla
  a_{j+1}\|_0^2}{\lambda_{j+1}}
\\ & \qquad\qquad\qquad \qquad\qquad 
+ \frac{\|a_{j+1}\|_0 \|\nabla^3a_{j+1}\|_0 + \|\nabla
  a_{j+1}\|_0\|\nabla^2 a_{j+1}\|_0}{\lambda_{j+1}^2}\Big) 
\\ & 
 \qquad\qquad \qquad \quad \leq C \tilde C_j^{1/2}\mu_j^{\gamma/2} 
\big(\|\nabla v_j^1\|_0 + \tilde C_j^{1/2}\mu_j^{\gamma/2} \big)\lambda_{j+1}.
\end{split}
\end{equation*}
For the bounds of the difference of $W_3$ and $W_2$, we
write: 
\begin{equation*}
\begin{split}
& \|W_3 - W_2\|_1\leq C\Big(\|b_{j+1}\|_0\|\nabla v_j^2\|_0 + \|b_{j+1}\|_0^2 
\\ & \qquad\qquad \qquad \qquad  +\frac{\|\nabla b_{j+1}\|_0\|\nabla
  v_j^2\|_0 + \|b_{j+1}\|_0\|\nabla^2 v_{j}^2\|_0 +
  \|b_{j+1}\|_0 \|\nabla b_{j+1}\|_0}{\mu_{j+1}} \\
& \qquad\qquad \qquad \qquad  +\frac{\|b_{j+1}\|_0\|\nabla^2
  b_{j+1}\|_0 + \|\nabla b_{j+1}\|_0^2}{\mu_{j+1}^2} \Big),\\
& \|\nabla^2 (W_3 - W_2)\|_0\leq 
C \Big(\mu_{j+1}\big(\|b_{j+1}\|_0\|\nabla v_j^2\|_0+ \|b_{j+1}\|_0^2\big) 
\\ & \qquad\qquad\qquad \qquad \qquad
+ \big(\|\nabla b_{j+1}\|_0\|\nabla v_j^2\|_0 + \|b_{j+1}\|_0\|\nabla^2 v_j^2\|_0  
+ \|b_{j+1}\|_0\|\nabla b_{j+1}\|_0\big)
\\ & \qquad\qquad\qquad \qquad\qquad + \frac{\|\nabla^2b_{j+1}\|_0\|\nabla v_j^2\|_0 + \|\nabla
  b_{j+1}\|_0\|\nabla^2 v_j^2\|_0 + \|b_{j+1}\|_0\|\nabla^3 v_j^2\|_0}{\mu_{j+1}} 
\\ & \qquad\qquad\qquad\qquad\qquad + \frac{\|b_{j+1}\|_0\|\nabla^2b_{j+1}\|_0  + \|\nabla
  b_{j+1}\|_0^2}{\mu_{j+1}}
\\ & \qquad\qquad\qquad \qquad\qquad 
+ \frac{\|b_{j+1}\|_0 \|\nabla^3b_{j+1}\|_0 + \|\nabla
  b_{j+1}\|_0\|\nabla^2 b_{j+1}\|_0}{\mu_{j+1}^2}\Big), 
\end{split}
\end{equation*}
and use use (\ref{VW3}), \ref{b1}, \ref{b2} and \ref{ass_HQ3}, (\ref{ass_ce2}) to obtain:
\begin{equation*}
\begin{split}
& \|W_3 - W_2\|_1\leq C \tilde C_j^{1/2}\mu_j^{\gamma/2} 
\Big(\|\nabla v_j^2\|_0 + \tilde C_j^{1/2}\mu_j^{\gamma/2}  
+\frac{B_j^{1/2}\mu_{j-1}^{\gamma/2}}{\mu_{j+1}/\mu_{j}}\Big)
\\ & \qquad \qquad\quad \; \leq C \tilde C_j^{1/2}\mu_j^{\gamma/2} \big(\|\nabla v_j^1\|_0 +
\tilde C_j^{1/2}\mu_j^{\gamma/2} \big),\vspace{1mm}\\
& \|\nabla^2 (W_3 - W_2)\|_0\leq C \tilde C_j^{1/2}\mu_j^{\gamma/2} 
\big(\|\nabla v_j^2\|_0 + \tilde C_j^{1/2}\mu_j^{\gamma/2} \big)\mu_{j+1}.
\end{split}
\end{equation*}

\smallskip

{\bf 9. (Proof of \ref{P2bound3} - decomposition bounds) } 
Pertaining to the application of Lemma \ref{lem_IBP} and Corollary \ref{cor_IBP},
we estimate the difference of $W_2$ and $W_1$ using (\ref{VW2}),
\ref{Ebound23}, \ref{ass_HQ3}, (\ref{ass_ce2}):
\begin{equation*}
\begin{split}
& \|W_2 - W_1\|_1 \leq C\sum_{i=0}^{N+1}
\frac{\|\nabla^{(i)}\big(a_{j+1}\nabla^2v_j^1\big)\|_0}{\lambda_{j+1}^{i+1}} 
+ C \sum_{i=0}^{N+1}
\frac{\|\nabla^{(i)}\big(a_{j+1}\nabla^2a_{j+1}\big)\|_0 +
\|\nabla^{(i)}\big(\nabla a_{j+1}^{\otimes 2} \big)\|_0  }{\lambda_{j+1}^{i+2}} 
\\ & \qquad\qquad\quad \; 
\leq C \tilde C_j^{1/2}\mu_j^{\gamma/2}\sum_{i=1}^{N+1}\Big(
\frac{B_j^{1/2}\mu_{j-1}^{\gamma/2}}{(\lambda_{j+1}/\lambda_j)^{i+1}} + 
\frac{\tilde C_j^{1/2}\mu_{j}^{\gamma/2}}{(\lambda_{j+1}/\mu_j)^{i+2}}\Big)
\leq C\tilde C_j \mu_j^\gamma.
\end{split}
\end{equation*}
Similarly, the second derivatives' difference satisfies:
\begin{equation*}
\begin{split}
& \|\nabla^2(W_2 - W_1)\|_0 \leq C\sum_{i=0}^{N+2}
\Big(\frac{\|\nabla^{(i)}\big(a_{j+1}\nabla^2v_j^1\big)\|_0}{\lambda_{j+1}^{i}} 
+ \frac{\|\nabla^{(i)}\big(a_{j+1}\nabla^2a_{j+1}\big)\|_0 +
\|\nabla^{(i)}\big(\nabla a_{j+1}^{\otimes 2} \big)\|_0  }{\lambda_{j+1}^{i+1}} \Big)
\\ & \qquad\qquad\qquad \quad \; \, \leq C\tilde C_j \mu_j^\gamma\lambda_{j+1},
\end{split}
\end{equation*}
whereas to estimate the difference of $w_{j+1}$ and $W_3$ we
additionally use (\ref{VW4}), \ref{b1}, \ref{b2}:
\begin{equation*}
\begin{split}
& \|w_{j+1} - W_3\|_1 \leq C\sum_{i=0}^{2}
\frac{\|\nabla^{(i)}\big(b_{j+1}\nabla^2v_j^2\big)\|_0}{\mu_{j+1}^{i+1}} 
\leq C \tilde C_j^{1/2}\mu_j^{\gamma/2}\sum_{i=0}^{2}
\frac{B_j^{1/2}\mu_{j-1}^{\gamma/2}}{(\mu_{j+1}/\lambda_{j+1})^{i+1}} 
\leq C\tilde C_j \mu_j^\gamma, \\
& \|\nabla^2(w_{j+1} - W_3)\|_0 \leq C\sum_{i=0}^{3}
\frac{\|\nabla^{(i)}\big(b_{j+1}\nabla^2v_j^2\big)\|_0}{\mu_{j+1}^{i}} 
\leq C\tilde C_j \mu_j^\gamma\mu_{j+1},
\end{split}
\end{equation*}
with constants $C$ depending on $\omega,\gamma, N$. 
Combining the bounds of steps 8 and 9 for the four quantities
appearing in (\ref{if}), we arrive at \ref{P2bound3}. The proof of
Proposition \ref{prop2} is done.
\end{proof}

\section{The stage construction and a proof of Theorem \ref{thm_stage}} \label{sec_khamsa} 

We are now ready to complete the ``stage'' in the present version of the 
convex integration algorithm. It requires a given number $K$ of iterations of the
quadruple step construction put forward in Proposition
\ref{prop2}. Since the first two iterations do not satisfy assumptions (\ref{ass_ce2}) and \ref{ass_HQ3}, they have to be
considered separately, with the same components of the proof of
Proposition \ref{prop2} carried out in steps 2-5 of the proof below. As we argue in the proof, the natural choice for the progression of frequencies is as depicted in Figure \ref{fig_freq}.

\begin{figure}[htbp]
\centering
\includegraphics[scale=0.6]{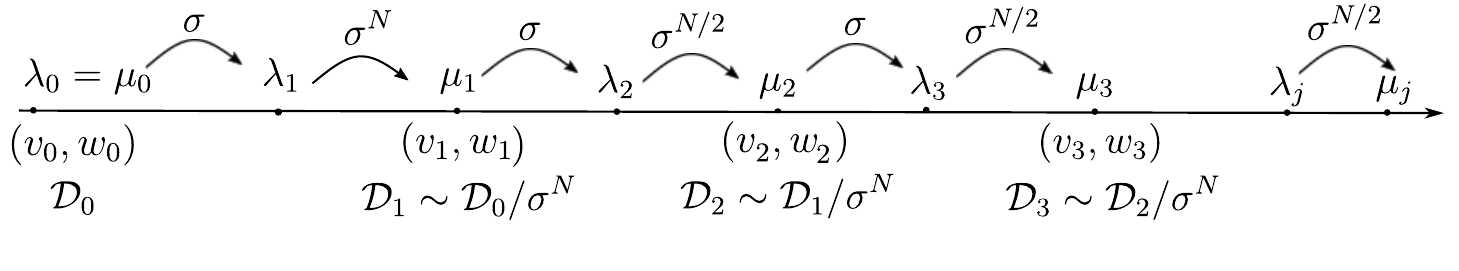}
\caption{{Progression of frequencies and consecutive defects' magnitudes in the proof of Theorem \ref{thm_stage}.}}
\label{fig_freq}
\end{figure}

\bigskip

\noindent {\bf Proof of Theorem \ref{thm_stage}}.

{\bf 1. (Setting the initial quantities)}
For the given double iteration numerals $N, K\geq 1$ and the
regularity exponent $\gamma\in (0,1)$, 
we take $l_0$ and $\sigma_0$ as in Proposition \ref{prop2}. Let $v, w$ be as
in the statement of the theorem, together with the positive constants
$l,\lambda,\mathcal{M}$, where we denote:
$$\sigma = \lambda l$$
and assume the following version of (\ref{Assu}):
\begin{equation}\label{Addi}
l\leq l_0,\qquad \sigma l^\gamma\geq \sigma^{K(1+N/2)\gamma}\sigma_0, \qquad
\mathcal{M}\geq \max\{\|v\|_0,\|w\|_0, 1\}.
\end{equation}
Resolving (\ref{Addi}) against (\ref{Assu}) will
be the content of step 8 below. We now observe that the second assumption
in (\ref{Addi}) implies that $\lambda_{j+1}^{1-\gamma} \geq
\mu_j\sigma_0$ for all $j=0\ldots K-1$, upon defining:
\begin{equation}\label{DEF}
\begin{split}
& \mu_0 = \frac{1}{l}, \qquad\mu_j = \frac{\sigma^{j+(j+1)N/2}}{l}\quad
\mbox{ for all }\; j=1\ldots K,\\
& \qquad\qquad\quad \; \;  
\lambda_j = \frac{\sigma^{j(1+N/2)}}{l}\qquad \mbox{ for all }\; 0=1\ldots K,
\end{split}
\end{equation}
because then indeed:
$$\frac{\lambda_{j+1}^{1-\gamma}}{\mu_j} =
\sigma^{1-(j+1)(1+N/2)\gamma}l^\gamma \geq
\sigma^{1-K(1+N/2)\gamma}l^\gamma\geq\sigma_0 \quad \mbox{ for all
}\; j=0\ldots K.$$
We first construct the fields 
$v_0, w_0\in \mathcal{C}^\infty(\bar\omega+\bar B_{l}(0),\R^2)$, 
$A_0\in \mathcal{C}^\infty(\bar\omega+\bar B_{l}(0),\R^{2\times 2}_\sym)$ 
by using the mollification kernel as in Lemma \ref{lem_stima}:
$$v_0=v\ast \phi_{l},\quad w_0=w\ast \phi_{l}, \quad A_0=A\ast \phi_{l},
\quad {\mathcal{D}}_0= A_0 - \big(\frac{1}{2}(\nabla v_0)^T\nabla v_0 + \sym\nabla w_0\big).$$
From Lemma \ref{lem_stima}, we deduce the initial bounds,
where constants $C$ depend only on $\omega, m, N, K$:
\begin{align*}
& \|v_0-v\|_1 + \|w_0-w\|_1 \leq C l\mathcal{M},
\tag*{(\theequation)$_1$}\refstepcounter{equation} \label{pr_stima1}\\
& \|A_0-A\|_0 \leq l^{s+\beta}\|A\|_{s,\beta}, \tag*{(\theequation)$_2$} \label{pr_stima2}\\
& \|\nabla^{(m)}\nabla^2v_0\|_0 + \|\nabla^{(m)}\nabla^2w_0\|_0\leq
\frac{C}{l^m} \mathcal{M}\quad \mbox{ for all }\; m=0\ldots K(3N+4), \tag*{(\theequation)$_3$} \label{pr_stima3}\\
& \|\nabla^{(m)} \mathcal{D}_0\|_0\leq
\frac{C}{l^m} \big(\|\mathcal{D}\|_0 + (l\mathcal{M})^2\big) \qquad \quad \mbox{ for
  all }\; m=0\ldots K(3N+4). \tag*{(\theequation)$_4$}\label{pr_stima4} 
\end{align*}
Indeed,  \ref{pr_stima1}, \ref{pr_stima2} follow from \ref{stima2} and
in view of the lower bound on $\mathcal{M}$. Similarly,
\ref{pr_stima3} follows by applying \ref{stima1} to 
$\nabla^2v$ and $\nabla^2w$ with the differentiability exponent $m-1$.
Since:
$$\mathcal{D}_0 = \mathcal{D}\ast \phi_{l} - \frac{1}{2}\big((\nabla
v_0)^T\nabla v_0 - ((\nabla v)^T\nabla v)\ast\phi_{l}\big), $$ 
we get \ref{pr_stima4} by applying \ref{stima1} to $\mathcal{D}$, and
\ref{stima4} to $\nabla v$.

\smallskip

{\bf 2. (The first iteration: from $j=0$ to $1$; the defect bound)} 
We follow the proof of Proposition \ref{prop2}
starting with $v_0, w_0, \mathcal{D}_0$ and parameters consistent with
\ref{pr_stima4}, \ref{pr_stima3}, (\ref{DEF}):
$$\tilde C_0=C \big(\|\mathcal{D}\|_0 + (l\mathcal{M})^2\big), \quad
B_0= \tilde C_0,\quad \lambda_0=\mu_0=\frac{1}{l}, \quad \lambda=\lambda_1<\mu_1.$$
Recall that $\lambda_1^{1-\gamma}\geq \mu_0\sigma_0$ by the second
assumption in (\ref{Addi}). Thus Proposition \ref{prop1} yields the fields
$a_{1}\in \mathcal{C}^\infty(\bar\omega +
\bar B_l(0), \R)$, $\Psi_{1}\in\mathcal{C}^\infty(\bar\omega +
\bar B_l(0),\R^2)$ and $\mathcal{F}_{1}\in
\mathcal{C}^\infty(\bar\omega+\bar B_l(0),\R^{2\times 2}_\sym)$
which satisfy same bounds as in  \ref{Ebound13}-\ref{Ebound43}:
\begin{equation}\label{a1}
\begin{split}
&  \frac{\bar C^{1/2} \tilde C_0^{1/2}}{2}\mu_0^{\gamma/2}\leq
a_{1}\leq \frac{3\bar C^{1/2} \tilde C_0^{1/2}}{2}\mu_0^{\gamma/2}
\; \quad \mbox{ and }\quad \|\nabla^{(m)} a_{1}\|_0\leq C\tilde
C_0^{1/2}\mu_0^{\gamma/2}\mu_0^m, \vspace{1mm} \\ 
& \displaystyle{\|\nabla^{(m)}\mathcal{F}_{1}\|_0 \leq 
C \tilde C_0 \frac{\mu_{0}^m}{(\lambda_{1}/\mu_0)^N}},\\
& \|\Psi_{1}\|_1\leq C\tilde C_0\mu_0^\gamma, \qquad
\|\nabla^2\Psi_{1}\|_0\leq C\tilde C_0\mu_0^\gamma\mu_0.
\end{split}
\end{equation}
Bounds (\ref{bound_Gj+1}) and (\ref{bound_G}) likewise remain the same:
\begin{equation}\label{G1}
\|\nabla^{(m)}\mathcal{G}_{1}\|_0\leq C \tilde C_0
\frac{\lambda_{1}^m}{(\lambda_{1}/\mu_0)^{N}},
\qquad \|\nabla^{(m)}G_1\|_0\leq C \tilde C_0
\frac{\mu_0^{\gamma}}{\lambda_{1}/\mu_0} \lambda_{1}^m.
\end{equation}
which together with (\ref{a1}) implies that $b_1=(a_1^2+G_1)^{1/2} \in \mathcal{C}^\infty(\bar\omega +
\bar B_l(0), \R)$ is well defined and satisfies, as in \ref{b1} and \ref{b2}:
\begin{equation}\label{B1}
\|b_{1}\|_0\leq C\tilde C_0^{1/2}\mu_0^{\gamma/2}
\; \quad \mbox{ and }\quad \|\nabla^{(m)} b_{1}\|_0\leq C\tilde
C_0^{1/2}\mu_0^{\gamma/2}\frac{\lambda_1^m}{\lambda_1/\mu_0} \mbox{ for } \; m\geq 1. 
\end{equation}
Consequently, the bound as in (\ref{bound_Hj+1}) is still the same, namely:
\begin{equation}\label{H1}
\|\nabla^{(m)}\mathcal{H}_1\|_0\leq C\tilde C_0^{1/2}\frac{\mu_1^m}{(\mu_1/\lambda_1)^2}.
\end{equation}
Now the fourth error term $\mathcal{I}_1$, defined in (\ref{err4}), enjoys the
worse bound than that in (\ref{bound_Ij+1}):
\begin{equation}\label{I1}
\begin{split}
\|\partial_1^{(t)}\partial_2^{(s)} \mathcal{I}_1\|_0 & \leq C\sum_{i=0}^2\sum_{p+q=s}
\mu_1^{p-i-1}\|\nabla^{(i+q+t)}\big(b_1\nabla^2v_0^2\big)\|_0 \\ & \leq
C\tilde C_0\mu_0^{\gamma/2}\sum_{i=0}^2\sum_{p+q=s}
\mu_1^{p-i-1}\lambda_1^{i+q+t}\mu_0 
\leq C\tilde C_0\mu_0^{\gamma/2}\frac{\lambda_1^t\mu_1^s}{\mu_1/\mu_0}
\leq C\tilde C_0\frac{\lambda_1^t\mu_1^s}{\mu_1/\lambda_1}.
\end{split}
\end{equation}
because although \ref{pr_stima3} is as before, derivatives of the different
components of $\nabla^2v_0$ have the same bound in (\ref{B1}).
Combining (\ref{a1}), (\ref{G1}), (\ref{H1}), (\ref{I1}) we obtain the principal bound:
\begin{equation}\label{D1}
\begin{split}
\|\nabla^{(m)} \mathcal{D}_1\|\leq C \tilde C_0\mu_1^m\Big(\frac{1}{(\lambda_1/\mu_0)^N}
& + \frac{1}{\mu_1/\lambda_1}\Big)
\leq C \tilde C_0 \frac{\mu_1^m}{\mu_1/\lambda_1}
\\ & \mbox{for all }\; m=0\ldots (K-1)(3N+4),
\end{split}
\end{equation}
which corresponds to \ref{P2bound4}, by setting according to (\ref{DEF}):
\begin{equation}\label{one}
\mu_1/\lambda_1 = (\lambda_1/\mu_0)^N = \sigma^N.
\end{equation}

\smallskip

{\bf 3. (The first iteration: from $j=0$ to $1$; the remaining bounds)} 
Recalling (\ref{VW1}), (\ref{VW3}) and using (\ref{a1}), (\ref{B1}) there follows:
\begin{equation}\label{fir_v1}
\|v_1-v_0\|_1\leq C\tilde C_0^{1/2}\mu_0^{\gamma/2},
\end{equation}
together with the bounds on $\nabla^2v_1$, corresponding yet inferior to \ref{P2bound2}, valid with uniform constants $C$ depending only on $\omega, \gamma, N, K$ for all $t+s=0\ldots (K-1)(3N+4)$:
\begin{equation}\label{sec_v1}
\begin{split}
& \|\partial_1^{(t)}\partial_2^{(s)}\partial_{11}v_1^1\|_0\leq \tilde C_0^{1/2}\mu_0^{t+s+1}
+ C\tilde C_0^{1/2}\mu_0^{\gamma/2}\lambda_1^{t+1}\mu_0^s\leq C\tilde
C_0^{1/2}\mu_0^{\gamma/2}\lambda_1^{t+1}\mu_0^s, \\  
& \|\partial_1^{(t)}\partial_2^{(s)}\partial_{12}v_1^1\|_0 
\leq \tilde C_0^{1/2}\mu_0^{t+s+1}
+ C\tilde C_0^{1/2}\mu_0^{\gamma/2}\lambda_1^{t}\mu_0^{s+1}
\leq C\tilde C_0^{1/2}\mu_0^{\gamma/2}\lambda_1^{t}\mu_0^{s+1}, \\  
& \|\partial_1^{(t)}\partial_2^{(s)}\partial_{22}v_1^1\|_0
\leq \tilde C_0^{1/2}\mu_0^{t+s+1} 
+ C\tilde C_0^{1/2}\mu_0^{\gamma/2}\lambda_1^{t-1}\mu_0^{s+2}
\leq C\tilde C_0^{1/2}\mu_0^{\gamma/2}\lambda_1^{t}\mu_0^{s+1}, \vspace{2mm}\\
& \|\partial_1^{(t)}\partial_2^{(s)}\partial_{11}v_1^2\|_0 
\leq \tilde C_0^{1/2}\mu_0^{t+s+1}
+ C\tilde C_0^{1/2}\mu_0^{\gamma/2}\lambda_1^{t+2}\mu_1^{s-1}
\leq C\tilde C_0^{1/2}\mu_0^{\gamma/2}\lambda_1^{t+1}\mu_1^{s}, \\  
& \|\partial_1^{(t)}\partial_2^{(s)}\partial_{12}v_1^2\|_0
\leq \tilde C_0^{1/2}\mu_0^{t+s+1} 
+ C\tilde C_0^{1/2}\mu_0^{\gamma/2}\lambda_1^{t+1}\mu_1^{s}
\leq C\tilde C_0^{1/2}\mu_0^{\gamma/2}\lambda_1^{t+1}\mu_1^{s},\\ 
& \|\partial_1^{(t)}\partial_2^{(s)}\partial_{22}v_1^2\|_0
\leq \tilde C_0^{1/2}\mu_0^{t+s+1} 
+ C\tilde C_0^{1/2}\mu_0^{\gamma/2}\lambda_1^{t}\mu_1^{s+1}
\leq C\tilde C_0^{1/2}\mu_0^{\gamma/2}\lambda_1^{t}\mu_1^{s+1}.
\end{split}
\end{equation}
Carrying out the remaining bounds as in steps 8 and 9 of the proof of
Proposition \ref{prop2}, we get:
\begin{equation}\label{sec_w1}
\begin{split}
& \|w_1-w_0\|_1\leq C\tilde C_0^{1/2}\mu_0^{\gamma/2}\big(\|\nabla v_0\|_0
+ \tilde C_0^{1/2}\mu_0^{\gamma/2}\big)\\ & \qquad\qquad\quad \leq
C\tilde C_0^{1/2}\mu_0^{\gamma/2}\big(\|\nabla v\|_0 + l\mathcal{M}
+ \tilde C_0^{1/2}\mu_0^{\gamma/2}\big) \leq
C\tilde C_0^{1/2}\mu_0^{\gamma}\big(\|\nabla v\|_0 + \tilde C_0^{1/2}\big), 
\\ & \|\nabla^{2}(w_1-w_0)\|_0\leq 
C\tilde C_0^{1/2}\mu_0^{\gamma}\big(\|\nabla v\|_0 + \tilde C_0^{1/2}\big)\mu_1,
\end{split}
\end{equation}
in view of \ref{pr_stima1}, \ref{pr_stima3}, (\ref{a1}) and (\ref{B1}).

\smallskip

{\bf 4. (The second iteration: from $j=1$ to $2$; the defect bound)} 
We follow  the proof of Proposition \ref{prop2}
starting with $v_1, w_1, \mathcal{D}_1$ and parameters consistent with
(\ref{D1}), (\ref{sec_v1}) and (\ref{DEF}):
$$\tilde C_1=C \frac{\tilde C_0}{\mu_1/\lambda_1}, \quad
B_1= \tilde C_0,\quad \mu_1\frac{\lambda_1}{\mu_0}=\mu_1\sigma =\lambda_2<\mu_2,$$
so that $\|\nabla^{(m)}\mathcal{D}_1\|_0\leq \tilde C_1\mu_1^m$ and
that (\ref{sec_v1}) is valid with $C\tilde C_0^{1/2}$ in its right hand side
replaced by $\tilde B_1^{1/2}$. Recall from (\ref{Addi}) that
$\lambda_2^{1-\gamma}\geq \mu_1\sigma_0$, allowing for the application
of Proposition \ref{prop1} which yields $a_{2}\in \mathcal{C}^\infty(\bar\omega +
\bar B_l(0), \R)$, $\Psi_{2}\in\mathcal{C}^\infty(\bar\omega +
\bar B_l(0),\R^2)$ and $\mathcal{F}_{2}\in
\mathcal{C}^\infty(\bar\omega+\bar B_l(0),\R^{2\times 2}_\sym)$.
These satisfy, as in \ref{Ebound13}-\ref{Ebound43}:
\begin{equation}\label{a2}
\begin{split}
&  \frac{\bar C^{1/2} \tilde C_1^{1/2}}{2}\mu_1^{\gamma/2}\leq
a_{2}\leq \frac{3\bar C^{1/2} \tilde C_1^{1/2}}{2}\mu_1^{\gamma/2}
\; \quad \mbox{ and }\quad \|\nabla^{(m)} a_{2}\|_0\leq C\tilde
C_1^{1/2}\mu_1^{\gamma/2}\mu_1^m, \vspace{1mm} \\ 
& \displaystyle{\|\nabla^{(m)}\mathcal{F}_{2}\|_0 \leq 
C \tilde C_1 \frac{\mu_{1}^m}{(\lambda_{2}/\mu_1)^N}},\\
& \|\Psi_{1}\|_1\leq C\tilde C_1\mu_1^\gamma, \qquad
\|\nabla^2\Psi_{2}\|_0\leq C\tilde C_1\mu_1^\gamma\mu_1.
\end{split}
\end{equation}
The bound (\ref{bound_Gj+1}) likewise remains the same, as 
$\big({\tilde C_0}/{\tilde C_1}\big)^{1/2}\leq
\big({\mu_1}/{\lambda_1}\big)^{1/2}\leq{\lambda_2}/{\lambda_1}$, and then:
\begin{equation}\label{G2}
\|\nabla^{(m)}\mathcal{G}_{2}\|_0\leq C \tilde C_1
\frac{\lambda_{2}^m}{(\lambda_{2}/\mu_1)^{N}}.
\end{equation}
Regarding (\ref{bound_G}), we only need to estimate derivatives of $IV_0=
\frac{\Gamma(\lambda_2x_1)}{\lambda_2}a_2\partial_{22}v_1^1$ as in step 4 of
the proof of Proposition \ref{prop2}, using (\ref{sec_v1}) and (\ref{a2}):
\begin{equation*}
\begin{split}
& \|\partial_1^{(t)}\partial_2^{(s)}IV_0\|_0  
\leq  C\sum_{p+q=t} \lambda_{2}^{p-1}\|\partial_1^{(q)}\partial_2^{(s)}
\big(a_{2}\partial_{22}v_1^1\big)\|_0 \\ & 
\qquad\qquad\qquad \leq C \hspace{-0.5cm}
\sum_{\tiny\begin{array}{c} p+q=t \\  u_1+z_1 = q \\ u_2+z_2=s \end{array}} \hspace{-0.5cm}
\lambda_{2}^{p-1}\|\partial_1^{(u_1)}\partial_2^{(u_2)} a_{2}\|_0
\|\partial_1^{(z_1)}\partial_2^{(z_2+2)} v_1^1\|_0 
\\ & \leq C \tilde C_1^{1/2}\tilde C_0^{1/2}\mu_1^{\gamma} \hspace{-0.5cm}
\sum_{\tiny\begin{array}{c} p+q=t \\  u_1+z_1 = q \\ u_2+z_2=s \end{array}} \hspace{-0.5cm}
\lambda_{2}^{p-1}\mu_1^{u_1+u_2}\lambda_1^{z_1}\mu_{0}^{z_2+1}
\leq  C \tilde C_1^{1/2}\tilde C_0^{1/2}\mu_1^{\gamma} \sum_{p+q=t}
\lambda_{2}^{p-1} \mu_1^{q+s}\mu_{0} \\ & 
\leq C \tilde C_1^{1/2}\tilde C_0^{1/2}\mu_1^{\gamma} 
\frac{\lambda_{2}^t\mu_1^s}{\lambda_{2}/\mu_{0}} 
= C \tilde C_1\lambda_{2}^t\mu_1^s\mu_1^{\gamma} 
\frac{(\tilde C_0/\tilde C_1)^{1/2}}{\lambda_{2}/\mu_{0}},
\end{split}
\end{equation*}
resulting in, in view of (\ref{Assu}) and (\ref{D1}):
\begin{equation*}
\begin{split}
\|\nabla^{(m)}G_2\|_0& \leq C \tilde C_1\lambda_2^m\frac{\mu_1^\gamma}{\lambda_2/\mu_1}
\cdot \frac{(\tilde C_0/\tilde
  C_1)^{1/2}}{(\mu_1/\lambda_1)\min\big\{\lambda_1/\mu_0, \;
  (\lambda_2/\mu_1)(\lambda_1/\mu_0), \; (\lambda_2/\mu_1)^2\big\}}
\\ & \quad +  C \tilde C_1\frac{\lambda_2^m}{\lambda_2/\mu_1}
\leq C\tilde C_1\frac{\lambda_2^m}{\lambda_2/\mu_1}.
\end{split}
\end{equation*}
Together with (\ref{a2}) the above implies that $b_2=(a_2^2+G_2)^{1/2}
\in \mathcal{C}^\infty(\bar\omega + 
\bar B_l(0), \R)$ is well defined and satisfies, as in \ref{b1} and \ref{b2}:
\begin{equation}\label{B2}
\|b_{2}\|_0\leq C\tilde C_1^{1/2}\mu_1^{\gamma/2}
\; \quad \mbox{ and }\quad \|\nabla^{(m)} b_{2}\|_0\leq C\tilde
C_1^{1/2}\mu_1^{\gamma/2}\frac{\lambda_2^m}{\lambda_2/\mu_1} \mbox{ for } \; m\geq 1. 
\end{equation}
Consequently, the bound as in (\ref{bound_Hj+1}) is still the same, namely:
\begin{equation}\label{H2}
\|\nabla^{(m)}\mathcal{H}_2\|_0\leq C\tilde C_1^{1/2}\frac{\mu_2^m}{(\mu_2/\lambda_2)^2}.
\end{equation}
Towards estimating the fourth error term $\mathcal{I}_2$ in (\ref{err4}),
we proceed as in step 6 of the proof of Proposition \ref{prop2},
noting the new estimate in view of the inferior bound on $\partial_{11}v_1^2$ in (\ref{sec_v1}):
\begin{equation*}
\begin{split}
& \sum_{p+q=s} \mu_{2}^{p-1}\|\partial_1^{(t)}\partial_2^{(q)}
\big(b_{2}\partial_{11}v_1^2\big)\|_0 
\leq C \hspace{-0.5cm}
\sum_{\tiny\begin{array}{c} p+q=s \\  u_1+z_1 = t \\ u_2+z_2=q \end{array}} \hspace{-0.5cm}
\mu_{2}^{p-1}\|\partial_1^{(u_1)}\partial_2^{(u_2)} b_{2}\|_0
\|\partial_1^{(z_1+2)}\partial_2^{(z_2)} v_1^2\|_0 
\\ & \leq C \tilde C_1^{1/2}\tilde C_0^{1/2}\mu_1^{\gamma} \hspace{-0.5cm}
\sum_{\tiny\begin{array}{c} p+q=s \\  u_1+z_1 = t \\ u_2+z_2=q \end{array}} \hspace{-0.5cm}
\mu_{2}^{p-1} \lambda_2^{u_1+u_2}\lambda_1^{z_1+1}\mu_{1}^{z_2}
\leq  C \tilde C_1^{1/2}\tilde C_0^{1/2}\mu_1^{\gamma} \sum_{p+q=s}
\mu_{2}^{p-1} \lambda_2^{q+t}\lambda_1 \\ & 
\leq C \tilde C_1^{1/2}\tilde C_0^{1/2}\mu_1^{\gamma} 
\frac{\lambda_{2}^t\mu_2^s}{\mu_{2}/\lambda_{1}} 
\leq C \tilde C_1\lambda_{2}^t\mu_2^s
\frac{(\tilde C_0/\tilde C_1)^{1/2}}{(\mu_{2}/\lambda_1)(\mu_1/\lambda_{1})},
\end{split}
\end{equation*}
and concluding the bound corresponding to (\ref{bound_Ij+1}):
\begin{equation}\label{I2}
\begin{split}
\|\nabla^{(m)}\mathcal{I}_2\|_0& \leq C \tilde C_1\mu_2^m
\frac{(\tilde C_0/\tilde C_1)^{1/2}}
{(\mu_2/\lambda_2)\min\big\{(\mu_2/\lambda_2)^2, \;
  \mu_1/\lambda_1, \; (\mu_2/\lambda_2)(\mu_1/\lambda_1)\big\}}.
\end{split}
\end{equation}
Together with (\ref{a2}), (\ref{G2}), (\ref{H2}), the above implies
the second principal bound:
\begin{equation}\label{D2}
\begin{split}
\|\nabla^{(m)} \mathcal{D}_2\|\leq C \tilde C_1\mu_2^m\Big(\frac{1}{(\lambda_2/\mu_1)^N}
+ & \frac{1}{(\mu_2/\lambda_2)^2}\Big)
\leq C \tilde C_1\frac{\mu_1^m}{(\mu_2/\lambda_2)^2}
\\ & \mbox{for all }\; m=0\ldots (K-2)(3N+4),
\end{split}
\end{equation}
which corresponds to \ref{P2bound4}, by setting in accordance with (\ref{DEF}):
\begin{equation}\label{two}
\mu_2/\lambda_2 = (\mu_1/\lambda_1)^{1/2}= (\lambda_2/\mu_1)^{N/2} = \sigma^{N/2}.
\end{equation}

\smallskip

{\bf 5. (The second iteration: from $j=1$ to $2$;  the remaining bounds)} 
Recalling (\ref{VW1}), (\ref{VW3}) and using (\ref{a2}), (\ref{B2}), there follows:
\begin{equation}\label{fir_v2}
\|v_2-v_1\|_1\leq C\tilde C_1^{1/2}\mu_1^{\gamma/2},
\end{equation}
together with the bounds on $\nabla^2v_2$, 
valid for all $t+s=0\ldots (K-2)(3N+4)$:
\begin{equation}\label{sec_v2}
\begin{split}
& \|\partial_1^{(t)}\partial_2^{(s)}\partial_{11}v_2^1\|_0\leq C\tilde
C_0^{1/2}\mu_0^{\gamma/2}\lambda_1^{t+1}\mu_0^s
+ C\tilde C_1^{1/2}\mu_1^{\gamma/2}\lambda_2^{t+1}\mu_1^s
\leq C\tilde C_1^{1/2}\mu_1^{\gamma/2}\lambda_2^{t+1}\mu_1^s,\\  
& \|\partial_1^{(t)}\partial_2^{(s)}\partial_{12}v_2^1\|_0 
\leq C\tilde C_0^{1/2}\mu_0^{\gamma/2}\lambda_1^{t}\mu_0^{s+1}
+ C\tilde C_1^{1/2}\mu_1^{\gamma/2}\lambda_2^{t}\mu_1^{s+1}
\leq C\tilde C_1^{1/2}\mu_1^{\gamma/2}\lambda_2^{t}\mu_1^{s+1}, \\  
& \|\partial_1^{(t)}\partial_2^{(s)}\partial_{22}v_2^1\|_0
\leq C\tilde C_0^{1/2}\mu_0^{\gamma/2}\lambda_1^{t}\mu_0^{s+1}
+ C\tilde C_1^{1/2}\mu_1^{\gamma/2}\lambda_2^{t-1}\mu_1^{s+2}
\leq C\tilde C_1^{1/2}\mu_1^{\gamma/2}\lambda_2^{t-1}\mu_1^{s+2}, \vspace{2mm}\\
& \|\partial_1^{(t)}\partial_2^{(s)}\partial_{11}v_2^2\|_0 
\leq C\tilde C_0^{1/2}\mu_0^{\gamma/2}\lambda_1^{t+1}\mu_1^{s}
+ C\tilde C_1^{1/2}\mu_1^{\gamma/2}\lambda_2^{t+2}\mu_2^{s-1}
\leq C\tilde C_1^{1/2}\mu_1^{\gamma/2}\lambda_2^{t+2}\mu_2^{s-1}, \\  
& \|\partial_1^{(t)}\partial_2^{(s)}\partial_{12}v_2^2\|_0
\leq C\tilde C_0^{1/2}\mu_0^{\gamma/2}\lambda_1^{t+1}\mu_1^{s}
+ C\tilde C_1^{1/2}\mu_1^{\gamma/2}\lambda_2^{t+1}\mu_2^{s}
\leq C\tilde C_1^{1/2}\mu_1^{\gamma/2}\lambda_2^{t+1}\mu_2^{s},\\ 
& \|\partial_1^{(t)}\partial_2^{(s)}\partial_{22}v_2^2\|_0
\leq C\tilde C_0^{1/2}\mu_0^{\gamma/2}\lambda_1^{t}\mu_1^{s+1}
+ C\tilde C_1^{1/2}\mu_1^{\gamma/2}\lambda_2^{t}\mu_2^{s+1}
\leq C\tilde C_1^{1/2}\mu_1^{\gamma/2}\lambda_2^{t}\mu_2^{s+1}.
\end{split}
\end{equation}
Above, we used the following fact resulting from (\ref{two}):
$$\Big(\frac{\tilde C_0}{\tilde C_1}\Big)^{1/2}\leq \Big(\frac{\mu_1}{\lambda_1}\Big)^{1/2}
\leq \min\Big\{ \frac{\lambda_2}{\lambda_1}, \,
\frac{\mu_1}{\mu_0}, \, \frac{\mu_1^2}{\lambda_2\mu_0},\, \frac{\lambda_2^2}{\lambda_1\mu_2},\,
\frac{\mu_2}{\mu_1}\Big\}.$$
Indeed, comparison with $\lambda_2/\lambda_2$, $\mu_1/\mu_0$ and
$\mu_2/\mu_1$ is direct, whereas:
$$\frac{\mu_1^2}{\lambda_2\mu_0}= \frac{\mu_1}{\lambda_1} 
\frac{\lambda_1}{\mu_0} \frac{\mu_1}{\lambda_2} =
\frac{\mu_1}{\lambda_1} \geq \Big(\frac{\mu_1}{\lambda_1}\Big)^{1/2},
\qquad \frac{\lambda_2^2}{\lambda_1\mu_2} = 
\frac{\lambda_2}{\mu_1}\frac{\mu_1}{\lambda_1}\frac{\lambda_2}{\mu_2}
= \frac{\lambda_2}{\mu_1}\Big(\frac{\mu_1}{\lambda_1}\Big)^{1/2}
\geq \Big(\frac{\mu_1}{\lambda_1}\Big)^{1/2}. $$
Carrying out the remaining bounds as in steps 8 and 9 of the proof of
Proposition \ref{prop2} yields:
\begin{equation}\label{sec_w2}
\begin{split}
& \|w_2-w_1\|_1\leq C\tilde C_1^{1/2}\mu_1^{\gamma/2}\big(\|\nabla v_1\|_0
+ \tilde C_1^{1/2}\mu_1^{\gamma/2}\big)\\ & \qquad\qquad\quad \leq
C\tilde C_1^{1/2}\mu_1^{\gamma/2}\big(\|\nabla v_0\|_0 
+ \tilde C_0^{1/2}\mu_0^{\gamma/2}\big) \leq
C\tilde C_1^{1/2}\mu_1^{\gamma}\big(\|\nabla v\|_0 + \tilde C_0^{1/2}\big), 
\\ & \|\nabla^{2}(w_2-w_1)\|_0\leq 
C\tilde C_1^{1/2}\mu_1^{\gamma}\big(\|\nabla v\|_0 + \tilde C_0^{1/2}\big)\mu_2,
\end{split}
\end{equation}
in view of (\ref{a2}), (\ref{B1}), (\ref{sec_v1}) and (\ref{fir_v1}).

\smallskip

{\bf 6. (Iterations from $j$ to $j+1$ with $j=2\ldots K-1$)}
Assume that we carried out the construction of $v_j,
w_j\in\mathcal{C}^\infty(\bar\omega+\bar B_l(0), \R^2)$ and that,
after setting the following quantities: 
$$\tilde C_j = C\frac{\tilde C_{j-1}}{(\mu_j/\lambda_j)^2},\qquad
B_j=C\tilde C_{j-1},\qquad \mu_j\sigma = \lambda_{j+1}<\mu_{j+1},$$
we get $\|\nabla^{(m)}\mathcal{D}_j\|_0\leq\tilde C_j\mu_j^m$ and
\ref{ass_HQ3}, valid for all $m, t+s=0\ldots (K-j)(3N+4)$. All these clearly holds at $j=2$, by (\ref{D2}) and
(\ref{sec_v2}). Observe that other assumptions of Proposition \ref{prop2}
are likewise valid, since $\lambda_{j+1}^{1-\gamma}\geq \mu_j\sigma_0$
in virtue of (\ref{Addi}), while (\ref{ass_ce2}) follows from:
$$\frac{B_j}{\tilde C_j} = \Big(\frac{\mu_j}{\lambda_j}\Big)^2 = 
\Big(\frac{\mu_{j+1}}{\lambda_{j+1}}\Big)^2 = \Big(\frac{\lambda_{j+1}}{\mu_j}\Big)^N = 
\sigma^N.$$
Consequently, we obtain $v_{j+1}, w_{j+1}\in\mathcal{C}^\infty(\bar\omega + \bar B_l(0), \R^2)$ such
that, in virtue of \ref{P2bound4}:
\begin{equation}\label{Dj-bound}
\|\nabla^{(m)}\mathcal{D}_{j+1}\|_0\leq C\mu_{j+1}^m\frac{\tilde C_j}{(\mu_{j+1}/\lambda_{j+1})^2} 
\quad\mbox{ for all } \; m=0\ldots (K-j+1)(3N+4).
\end{equation}
Also, \ref{P2bound2} implies that \ref{ass_HQ3} holds at $j+1$, for all $t+s=0\ldots (K-j+1)(3N+4),$ 
with $B_{j+1}=C\tilde C_j$, because the extra factors in the third and
fourth bounds there become:
\begin{equation*}
\frac{1}{(\lambda_j/\mu_{j-1})^2(\mu_j/\lambda_{j+1})}= \frac{1}{\sigma} \leq 1,\qquad 
\frac{1}{(\lambda_{j+1}/\lambda_j)(\lambda_{j+1}/\mu_{j+1})}= \frac{1}{\sigma^{N/2}} \leq 1,
\end{equation*}
once we have set the frequencies through (\ref{DEF}).
Further, \ref{P2bound1} and \ref{P2bound3} imply:
\begin{equation}\label{sec_w3}
\begin{split}
&\|v_{j+1}-v_j\|_1\leq C\tilde C_j^{1/2}\mu_j^{\gamma/2} \leq C\tilde C_0 \mu_j^{\gamma/2}, \\ &
\|w_{j+1}-w_j\|_1\leq C\tilde C_j^{1/2}\mu_j^\gamma\big(\|\nabla
v\|_0+\tilde C_0^{1/2}\big), \\ &
\|\nabla^2(w_{j+1}-w_j)\|_0\leq C\tilde C_j^{1/2}\mu_j^\gamma \big(\|\nabla
v\|_0+\tilde C_0^{1/2}\big)\mu_{j+1}.
\end{split}
\end{equation}

\smallskip

{\bf 7. (Proof of the final bounds)} After the total of $K$ steps, we declare:
$$\tilde v = v_K, \qquad \tilde w=w_K.$$
Using \ref{pr_stima1}, together with (\ref{fir_v1}), (\ref{sec_w1}),
(\ref{fir_v2}), (\ref{sec_w2}) and (\ref{sec_w3}) we arrive at the following version of the
estimates, claimed in \ref{Abound12}:
\begin{equation*}
\begin{split}
& \|\tilde v-v\|_1\leq \|v-v_0\|_1 +
\sum_{j=1}^{K-1}\|v_{j+1}-v_j\|_1\leq C\big(l\mathcal{M} + \tilde C_0^{1/2}\mu_{K}^{\gamma/2}\big)
\leq C\mu_K^{\gamma/2}(\|\mathcal{D}\|_0^{1/2} + l\mathcal{M}),
\\ & \|\tilde w-w\|_1\leq \|w-w_0\|_1 +
\sum_{j=1}^{K-1}\|w_{j+1}-w_j\|_1\leq C l\mathcal{M} + C\tilde
C_0^{1/2}\mu_{K}^{\gamma/2}(\|\nabla v\|_0+\tilde C_0^{1/2})
\\ & \qquad\qquad \, \leq C\mu_K^{\gamma/2}(\|\mathcal{D}\|_0^{1/2} + l\mathcal{M}) 
\big(1+\|\mathcal{D}\|_0^{1/2} + l\mathcal{M} + \|\nabla v\|_0\big).
\end{split}
\end{equation*}
Further, from (\ref{D1}), (\ref{D2}) and (\ref{Dj-bound}):
\begin{equation}\label{Cfin}
\|\mathcal{D}_j\|_0\leq \tilde C_j = C\frac{\tilde C_0}{\sigma^{jN}}\qquad\mbox{for all }\; j=0\ldots K,
\end{equation}
so that, recalling (\ref{DEF}) we get:
\begin{equation*}
\begin{split}
\tilde C_j^{1/2}\mu_{j+1} & = C \frac{\tilde
  C_0^{1/2}}{\sigma^{jN/2}}\sigma^{j+1+(j+2)N/2}\mu_0 = C \tilde
C_0^{1/2}\sigma^{j+1+N}\mu_0\\ & \leq C\tilde C_0^{1/2}\sigma^{K+N}\mu_0
\quad \mbox{ for all }\; j= \ldots K-1.
\end{split}
\end{equation*}
Therefore, recalling \ref{ass_HQ3} at $j+1=K$, we get a version of
the first bound in \ref{Abound22}:
\begin{equation*}
\begin{split}
\|\nabla^2 \tilde v\|_0 & \leq \mu_K^{\gamma/2} B_K^{1/2}\mu_K = C\tilde
C_{K-1}^{1/2}\mu_K^{\gamma/2}\mu_K 
\\ & = C \tilde C_0^{1/2} \mu_K^{\gamma/2}\sigma^{K+N}\mu_0
\leq C\mu_K^{\gamma/2}\frac{(\lambda l)^{K+N}}{l}(\|\mathcal{D}\|_0^{1/2} + l\mathcal{M}),
\end{split}
\end{equation*}
while the second bound follows through (\ref{sec_w1}), \ref{sec_w2}) and (\ref{sec_w3}):
\begin{equation*}
\begin{split}
\|\nabla^2 \tilde w\|_0 & \leq \|\nabla^2w_0\|_0 + \sum_{j=0}^{K-1}\|\nabla^2(w_{j+1}-w_j)\|_0
\leq Cl\mathcal{M}\mu_0 + \mu_K^{\gamma} \big(\|\nabla v\|_0 +
\tilde C_0^{1/2}\big) \sum_{j=0}^{K-1}\tilde C_{j}^{1/2}\mu_{j+1}
\\ & \leq C \tilde C_0^{1/2} \mu_K^{\gamma}\sigma^{K+N}\mu_0\big(
1+\|\mathcal{D}\|_0^{1/2} + l\mathcal{M} + \|\nabla v\|_0\big)
\\ & \leq C\mu_K^{\gamma}\frac{(\lambda l)^{K+N}}{l}(\|\mathcal{D}\|_0^{1/2} + l\mathcal{M})
\big(1+\|\mathcal{D}\|_0^{1/2} + l\mathcal{M} + \|\nabla v\|_0\big).
\end{split}
\end{equation*}
Finally, \ref{Abound32} results from \ref{pr_stima2} and (\ref{Cfin}):
\begin{equation*}
\begin{split}
\|\tilde{\mathcal{D}}\|_0 & \leq \|A-A_0\|_0 + \|\mathcal{D}_K\|_0 
\leq C l^{s+\beta}\|A\|_{s,\beta} + \tilde C_K \leq C \Big( l^{s+\beta}\|A\|_{s,\beta}+
\frac{\tilde C_0}{\sigma^{KN}}\Big) \\ & \leq
C\Big (l^{s+\beta}\|A\|_{s,\beta}+ \frac{\|\mathcal{D}\|_0 + (l\mathcal{M})^2}{(\lambda l)^{KN}}\Big). 
\end{split}
\end{equation*}

\smallskip

{\bf 8. (Reparametrizing $\gamma$)} 
The final bounds that we proved in step 7, carry the following factor in their right hand sides, in virtue of (\ref{DEF}):
\begin{equation*}
\begin{split}
& \mu_K^\gamma = \Big(\frac{\sigma^{K+(K+1)N/2}}{l}\Big)^\gamma=
\lambda^{K(1+N/2)\gamma+\gamma} l^{K(1+N/2)\gamma}\leq \lambda^{K(1+N/2)\gamma+\gamma}
=\lambda^{\bar\gamma},
\\ & \mbox{where }\; \bar\gamma = \big(K(1+N/2) + 1\Big)\gamma.
\end{split}
\end{equation*}
We also note that the second condition in
(\ref{Addi}) is equivalently written as:
$$\lambda^{1-\frac{\gamma}{1-K(1+N/2)\gamma+\gamma}} l \geq \sigma_0^{\frac{1}{1-K(1+N/2)\gamma+\gamma}},$$
which, using the notation of $\bar\gamma$, becomes:
\begin{equation}\label{neww}
\lambda^{1-\frac{\bar\gamma}{(K(1+N/2)+1)(1-\bar\gamma+2\gamma)}}l
\geq \sigma_0^{\frac{1}{1-\bar\gamma+2\gamma}}.
\end{equation}
The exponent on $\lambda$ in the left hand side above is greater than $1-\bar{\gamma}$, if we assume:
\begin{equation}\label{Addw}
\bar \gamma\leq \frac{1}{2}\qquad\mbox{ and }\qquad 
\lambda^{1-\bar\gamma}l\geq \sigma_0^2.
\end{equation}
We then validate (\ref{neww}) through (\ref{Addw}), because:
$$\lambda^{1-\frac{\bar\gamma}{(K(1+N/2)+1)(1-\bar\gamma+2\gamma)}}l
\geq \lambda^{1-\bar\gamma}l\geq \sigma_0^2\geq \sigma_0^{\frac{1}{1-\bar\gamma}}
\geq \sigma_0^{\frac{1}{1-\bar\gamma+2\gamma}}.$$
In summary, we treat $\lambda$ in the statement of Theorem
\ref{thm_stage} as the given $\bar\lambda$ which, without loss of
generality we decrease below $1/2$. We then define $\sigma_0$
to be the square of $\sigma_0$ from Proposition \ref{prop2}, whereas
the second assumption in (\ref{Assu}) is exactly the second condition
in (\ref{Addw}), implying the second condition in (\ref{Addi}). This
yields all the bounds in step 7. Replacing in their right hand side
the factor $\mu_K^\gamma$ by its majorant $\lambda^{\bar\gamma}$, we
obtain the claimed formulas \ref{Abound12}-\ref{Abound32}, with the
stated dependence of constants and parameters.
The proof is done.
\endproof

\section{The Nash-Kuiper scheme and a proof of Theorem \ref{th_final}}\label{sec4}

The proof of Theorem \ref{th_final} relies on iterating Theorem \ref{thm_stage} using the Nash-Kuiper scheme. We quote the slightly altered version of the main recursion argument in \cite{lew_improved, lew_improved2}, which is similar to  \cite[section 6]{CDS}, now involving the H\"older norms, necessary in view of Lemma \ref{lem_diagonal}. The proof, allowing now for $s\neq 0$ follows by inspection of the original calculations in \cite{lew_improved}.

\begin{theorem}\label{th_NK}\cite[Theorem 1.4]{lew_improved}
  \cite[Lemma 5.2]{lew_improved2}
Let $\omega\subset\R^d$ be an open, bounded and smooth domain,
and let $k, J, S\geq 1$. Assume that there exists $l_0\in (0,1)$ such that
the following holds for every $l\in (0, l_0]$. Given
the fields $v\in\mathcal{C}^2(\bar\omega+\bar B_{2l}(0), \R^k)$, $w\in\mathcal{C}^2(\bar\omega+\bar B_{2l}(0), \R^d)$, $A\in\mathcal{C}^{s,\beta}(\bar\omega+\bar B_{2l}(0), \R^{d\times d}_\sym)$ where $s\in\{0,1\}$, $\beta\in (0,1]$ with $s+\beta<2$, and given $\gamma, \lambda, \mathcal{M}$ which satisfy,
together with $\sigma_0\geq 1$ that depends on $\omega, k, S, J,\gamma$, the following bounds: 
\begin{equation}\label{ass_impro2}
\gamma\in (0,1),\qquad \lambda^{1-\gamma} l>\sigma_0,
\qquad \mathcal{M}\geq \max\{\|v\|_2, \|w\|_2, 1\},
\end{equation}
there exist  $\tilde v\in\mathcal{C}^2(\bar \omega+\bar B_l(0),\R^k)$,
$\tilde w\in\mathcal{C}^2(\bar\omega+\bar B_l(0),\R^d)$ satisfying:
\begin{align*}
& \hspace{-3mm} \left. \begin{array}{l} \|\tilde v - v\|_1\leq
C\lambda^{\gamma/2}\big(\|\mathcal{D}\|_0^{1/2}+l\mathcal{M}\big), \vspace{1mm} \\ 
\|\tilde w - w\|_1\leq C\lambda^{\gamma}\big(\|\mathcal{D}\|_0^{1/2}+l\mathcal{M}\big)
\big(1+ \|\mathcal{D}\|_0^{1/2}+l\mathcal{M}+\|\nabla v\|_0\big), \end{array}\right.
\vspace{5mm}\\
& \hspace{-3mm} \left. \begin{array}{l} \|\nabla^2\tilde v\|_0\leq C{\displaystyle{\frac{(\lambda
 l)^J}{l}\lambda^{\gamma/2}}}\big(\|\mathcal{D}\|_0^{1/2}+l\mathcal{M}\big),\vspace{1mm}\\ 
\|\nabla^2\tilde w\|_0\leq C{\displaystyle{\frac{(\lambda
  l)^J}{l}}}\lambda^{\gamma}\big(\|\mathcal{D}\|_0^{1/2}+l\mathcal{M}\big)
\big(1+\|\mathcal{D}\|_0^{1/2}+l\mathcal{M}+\|\nabla v\|_0\big), \end{array}\right.
\medskip\\ 
& \|\tilde{\mathcal{D}}\|_0\leq C\Big(l^{s+\beta}{\|A\|_{s,\beta}}
+\frac{\lambda^{\gamma}}{(\lambda l)^S} \big(
\|\mathcal{D}\|_0 + (l\mathcal{M})^2\big)\Big),
\end{align*}
with constants $C$ that depend only on $\omega, k, J,S,\gamma$,
and where we denote the defects:
$$\mathcal{D}=A -\big(\frac{1}{2}(\nabla v)^T\nabla v + \sym\nabla
w\big),\qquad \tilde{\mathcal{D}}=A -\big(\frac{1}{2}(\nabla \tilde
v)^T\nabla \tilde v + \sym\nabla \tilde w\big).$$
Then, for every triple of fields $v, w, A$ as above, which additionally
satisfy the defect smallness condition $0<\|\mathcal{D}\|_0\leq 1$, 
and for every exponent $\alpha$ in the range:
\begin{equation}\label{rangeAlz}
0< \alpha <\min\Big\{\frac{s+\beta}{2},\frac{S}{S+2J}\Big\},
\end{equation}
there exist $\bar v\in\mathcal{C}^{1,\alpha}(\bar\omega,\R^k)$ and
$\bar w\in\mathcal{C}^{1,\alpha}(\bar\omega,\R^d)$ with the following properties:
\begin{align*}
& \|\bar v - v\|_1\leq C \big(1+\|\nabla v\|_0\big)^2
\|\mathcal{D}_0\|_0^{1/4}, \quad \|\bar w -
w\|_1\leq C(1+\|\nabla v\|_0)^3\|\mathcal{D}\|_0^{1/4}, \vspace{1mm}\\
& A-\big(\frac{1}{2}(\nabla \bar v)^T\nabla \bar v + \sym\nabla
\bar w\big) =0 \quad\mbox{ in }\; \bar\omega. 
\end{align*}
The constants $C$ above depend only on $\omega, k, A$ (and hence on $s,\beta$) and $\alpha$. 
\end{theorem}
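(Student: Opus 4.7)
The plan is to iterate the stage hypothesis recursively, in the spirit of the classical Nash-Kuiper scheme. Starting from $(v_0,w_0) = (v,w)$ with initial defect $\mathcal{D}_0 = \mathcal{D}$, I will inductively construct a sequence $(v_n,w_n)_{n\geq 1}$ by applying the stage hypothesis at each step with a triple of parameters $(l_n,\lambda_n,\mathcal{M}_n)$ to be tuned. Setting $\delta_n = \|\mathcal{D}_n\|_0$, the stage returns controls on $\|v_{n+1}-v_n\|_1$, $\|\nabla^2 v_{n+1}\|_0$ and $\delta_{n+1}$, exactly as displayed in the statement (and analogously for $w$). Since each stage application requires the input fields to be defined on $\bar\omega+\bar B_{2l_n}(0)$ and returns output on $\bar\omega+\bar B_{l_n}(0)$, I take $l_{n+1}\leq l_n/2$ to keep the iteration consistent.

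The parameters are chosen following the standard Nash-Kuiper ansatz: the ratio $\lambda_n l_n = \rho$ is kept constant at a large value, $\lambda_n$ grows super-exponentially (for instance $\lambda_n = \lambda_0^{b^n}$ with $b>1$), and $\mathcal{M}_n \sim \|\nabla^2 v_n\|_0 \sim 1/l_n$. The two basic invariants to maintain inductively are, first, the compatibility $l_n \mathcal{M}_n \leq C\delta_n^{1/2}$, which ensures that the stage output is driven by $\delta_n^{1/2}$ rather than by the smoothness bound, and second, the geometric defect decay $\delta_{n+1}\leq \delta_n/\rho^S$, which follows from the stage bound provided $\rho$ is large enough and $\lambda_n^\gamma \ll \rho^S$. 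The hypothesis (\ref{ass_impro2}) then amounts to $\rho \lambda_n^{-\gamma}\geq \sigma_0$, which holds for all $n$ once $\rho$ is fixed large in function of $\sigma_0$ and $\gamma$.

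The $\mathcal{C}^{1,\alpha}$ convergence is extracted by the interpolation inequality $[\nabla f]_\alpha \leq C\|\nabla f\|_0^{1-\alpha}\|\nabla^2 f\|_0^\alpha$, applied to the increments $f = v_{n+1}-v_n$. Substituting $\|v_{n+1}-v_n\|_1 \leq C\delta_n^{1/2}$ and $\|\nabla^2 v_{n+1}\|_0 \leq C/l_{n+1}$, then computing the consecutive ratio, the series $\sum_n [\nabla(v_{n+1}-v_n)]_\alpha$ will converge provided
\begin{equation*}
\rho^{J\alpha - S(1-\alpha)/2}<1, \qquad \text{that is}, \qquad \alpha < \frac{S}{S+2J}.
\end{equation*}
The other endpoint $\alpha < \beta/2$ emerges from the mollification error in the defect bound: the term $l_n^\beta\|A\|_{0,\beta}$ imposes the floor $\delta_n\geq c l_n^\beta$, and balancing this floor against the interpolation forces the summability threshold $\alpha < \beta/2$. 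Together these yield the claimed range (\ref{rangeAlz}).

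The main obstacle will be maintaining the consistency of the induction. At each step one must simultaneously verify that $\mathcal{M}_{n+1}$ absorbs the new Hessian blow-up $\|\nabla^2 v_{n+1}\|_0$, that the stage hypothesis on $(l_{n+1},\lambda_{n+1},\mathcal{M}_{n+1})$ holds with the same $\sigma_0$, that the shrinking domains satisfy $\bar\omega+\bar B_{l_{n+1}}(0)\subset \bar\omega+\bar B_{l_n/2}(0)$, and that the compatibility $l_{n+1}\mathcal{M}_{n+1}\leq C\delta_{n+1}^{1/2}$ continues to hold --- this last condition is precisely what dictates the super-exponential growth rate of $\lambda_n$. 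Once the bookkeeping is in place, telescoping the $\mathcal{C}^1$-increment bound produces the limits $\bar v,\bar w\in\mathcal{C}^{1,\alpha}$ which solve the system exactly by the $\delta_n \to 0$ passage. The quantitative bounds $\|\bar v-v\|_1\leq C(1+\|\nabla v\|_0)^2\|\mathcal{D}\|_0^{1/4}$ and $\|\bar w-w\|_1\leq C(1+\|\nabla v\|_0)^3\|\mathcal{D}\|_0^{1/4}$ then arise from selecting $\lambda_0$ as a suitable negative power of $\|\mathcal{D}\|_0$: the exponent $1/4$ absorbs the $\lambda_0^{\gamma/2}$ loss in the first stage, while the polynomial factors in $\|\nabla v\|_0$ track the $\|\nabla v\|_0$-dependence appearing in the stage bounds on the $w$ component.
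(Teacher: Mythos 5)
The paper does not actually prove Theorem \ref{th_NK}: it is imported verbatim from \cite[Theorem 1.4]{lew_improved} and \cite[Lemma 5.2]{lew_improved2}, so your attempt is measured against that external proof, whose overall skeleton (iterate the stage, track $(l_n,\lambda_n,\mathcal{M}_n,\delta_n)$, interpolate $[\nabla f]_\alpha\leq C\|\nabla f\|_0^{1-\alpha}\|\nabla^2 f\|_0^{\alpha}$ on increments, rescale the first stage to get the $\|\mathcal{D}\|_0^{1/4}$ bounds) you have correctly reproduced.

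However, your parameter bookkeeping — which you yourself identify as "the main obstacle" — is internally inconsistent, and this is a genuine gap rather than a detail. If $\lambda_n l_n=\rho$ is held constant while $\lambda_n\to\infty$ and $\gamma\in(0,1)$ is fixed, then $\lambda_n^{1-\gamma}l_n=\rho\,\lambda_n^{-\gamma}\to 0$, so hypothesis (\ref{ass_impro2}) fails after finitely many stages no matter how large the fixed $\rho$ is; your claim that $\rho\lambda_n^{-\gamma}\geq\sigma_0$ "holds for all $n$ once $\rho$ is fixed large" is false. For the same reason the per-stage contraction factor is $\lambda_n^{\gamma}/(\lambda_nl_n)^S=\lambda_n^{\gamma}/\rho^S\to\infty$, so the geometric decay $\delta_{n+1}\leq\delta_n/\rho^S$ cannot be sustained, and your side condition "$\lambda_n^{\gamma}\ll\rho^S$" is incompatible with $\lambda_n\to\infty$. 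The ansatz also clashes with itself elsewhere: taking $\mathcal{M}_n\sim 1/l_n$ gives $l_n\mathcal{M}_n\sim 1$, contradicting the invariant $l_n\mathcal{M}_n\lesssim\delta_n^{1/2}$ with $\delta_n\to0$; and super-exponential $\lambda_n=\lambda_0^{b^n}$ with constant product forces $1/l_{n+1}$ to grow super-exponentially, which makes $\sum_n\delta_n^{(1-\alpha)/2}l_{n+1}^{-\alpha}$ diverge and destroys the very summability computation from which you extract $\alpha<\frac{S}{S+2J}$ (that threshold emerges only when $1/l_n$ grows at the geometric rate $\sigma^{J+S/2}$ per stage, tied to the defect decay). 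The correct scheme — as in the cited proof — lets $\sigma_n=\lambda_nl_n$ increase (essentially geometrically, with $l_n\downarrow 0$ and $\lambda_n\uparrow\infty$ both at controlled geometric rates), keeps (\ref{ass_impro2}) by construction, and chooses $\gamma$ small in terms of the gap $\frac{S}{S+2J}-\alpha$ so that the $\lambda_n^{\gamma}$ and $\lambda_n^{\gamma/2}$ losses in the stage estimates are absorbed; this is precisely why (\ref{rangeAlz}) is an open range. As written, your blueprint does not close, so the induction you describe cannot be carried out without replacing the constant-$\rho$, fixed-$\gamma$, super-exponential-$\lambda_n$ choices by a consistent progression.
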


\smallskip

Clearly, Theorem \ref{th_NK} and Theorem \ref{thm_stage} yield together
the following result below, because:
\begin{equation*}
\begin{split}
\frac{S}{S+2J} = \frac{KN}{KN+2(K+N)} & \to
\frac{N}{N+2} \quad \mbox{ as } \; K\to\infty \\
& \to 1 \qquad \quad \; \,\mbox{ as } \;N\to\infty.
\end{split}
\end{equation*}

\begin{corollary}\label{th_NKH}
Let $\omega\subset\R^2$ be an open, bounded and smooth domain.
Fix $\alpha$ as in (\ref{VKrange}).
Then, there exists $l_0\in (0,1)$ such that, for every $l\in (0, l_0]$
and for every $v, w\in\mathcal{C}^2(\bar\omega + \bar B_{2l}(0),\R^2)$,
$A\in\mathcal{C}^{s,\beta}(\bar\omega +\bar B_{2l}(0), \R^{2\times 2}_\sym)$ where $s\in\{0,1\}$, $\beta\in (0,1]$ with $s+\beta<2$, and such that:
$$\mathcal{D}=A-\big(\frac{1}{2}(\nabla v)^T\nabla v + \sym\nabla
w\big) \quad\mbox{ satisfies } \quad 0<\|\mathcal{D}\|_0\leq 1,$$
there exist $\tilde v, \tilde w \in\mathcal{C}^{1,\alpha}(\bar\omega,\R^2)$
with the following properties:
\begin{align*}
& \|\tilde v - v\|_1\leq C(1+\|\nabla v\|_0)^2\|\mathcal{D}\|_0^{1/4}, \quad \|\tilde w -
w\|_1\leq C (1+\|\nabla v\|_0)^3\|\mathcal{D}\|_0^{1/4}, \vspace{1mm}\\
& A-\big(\frac{1}{2}(\nabla \tilde v)^T\nabla \tilde v + \sym\nabla
\tilde w\big) =0 \quad\mbox{ in }\; \bar\omega. 
\end{align*}
The norms in
the left hand side above are taken on $\bar\omega$, and in the right hand
side on $\bar\omega+ \bar B_{2l}(0)$. The constants $C$ depend only
on $\omega, A$ (and hence on $s,\beta$) and $\alpha$. 
\end{corollary}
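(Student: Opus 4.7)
The plan is to derive Corollary \ref{th_NKH} as an immediate consequence of Theorem \ref{th_NK}, using Theorem \ref{thm_stage} to supply its standing ``stage'' hypothesis. First I would set up the parameter correspondence
\begin{equation*}
J = K+N, \qquad S = KN,
\end{equation*}
and check that the three families of bounds \ref{Abound12}, \ref{Abound22}, \ref{Abound32} in Theorem \ref{thm_stage} are precisely the three bounds demanded at the start of Theorem \ref{th_NK}, up to the harmless observation that $\lambda^{\gamma} \geq 1$ makes \ref{Abound32} imply the slightly weaker defect estimate in Theorem \ref{th_NK}. The smallness conditions (\ref{Assu}) on $(l, \lambda, \mathcal{M})$ line up with (\ref{ass_impro2}), the output regularity $\tilde v, \tilde w \in \mathcal{C}^2(\bar\omega + \bar B_l(0), \R^2)$ matches the domain-shrinking requirement, and the dependence of constants is only on $\omega, \gamma, N, K$, which fits the allowed dependence in Theorem \ref{th_NK} once $K, N$ are fixed.

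With this identification, for every fixed pair of integers $K, N \geq 1$ the stage hypothesis of Theorem \ref{th_NK} is verified, and its conclusion applies. This produces $\bar v, \bar w \in \mathcal{C}^{1,\alpha}(\bar\omega)$ with the claimed displacement bounds and the exact identity $A = \frac{1}{2}(\nabla \bar v)^T \nabla \bar v + \sym \nabla \bar w$, in the range
\begin{equation*}
0 < \alpha < \min\Bigl\{\frac{\beta}{2}, \frac{S}{S+2J}\Bigr\} = \min\Bigl\{\frac{\beta}{2}, \frac{KN}{KN + 2(K+N)}\Bigr\}.
\end{equation*}

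The remaining task, which is where any genuine content of the reduction lies, is to verify that every $\alpha$ in the target range (\ref{VKrange}) can be reached by an appropriate choice of $K, N$. This is exactly the iterated-limit calculation already displayed immediately after the statement of Theorem \ref{thm_stage}: given any $\alpha < \min\{\beta/2, 1\}$, I would first fix $N$ so large that $N/(N+2) > \alpha$, and then fix $K$ so large that $KN/(KN + 2K + 2N)$ exceeds $\alpha$ as well. I do not anticipate any real obstacle here; the only subtlety worth flagging is bookkeeping of constants, namely that the $C$ appearing in Theorem \ref{thm_stage} depends on $K, N$, but once $K, N$ are chosen in function of $\alpha$, this absorbs into the constant provided by Theorem \ref{th_NK}, which then depends only on $\omega, A$ and $\alpha$ as asserted.
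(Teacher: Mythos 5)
Your proposal is correct and follows exactly the paper's (very brief) argument: identify $J=K+N$, $S=KN$, note that the estimates \ref{Abound12}--\ref{Abound32} of Theorem \ref{thm_stage} verify the stage hypothesis of Theorem \ref{th_NK} (the defect bound even without the extra factor $\lambda^\gamma\geq 1$), and then choose $N$ and $K$ so that $KN/(KN+2(K+N))$ exceeds the prescribed $\alpha<\min\{\beta/2,1\}$. The parameter bookkeeping and the constant dependence are handled as in the paper, so there is nothing to add.
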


\bigskip

\noindent The proof of Theorem \ref{th_final} is consequently the same as the proof of
Theorem 1.1 in \cite{lew_improved}, in section 5 there. We 
replace $\omega$ by its smooth superset, and apply the basic stage
construction in order to first decrease $\|\mathcal{D}\|_0$
below $1$. Then, Corollary \ref{th_NKH} yields the final result. \endproof


\end{document}